\newtheorem*{thm*}{Theorem}
\newtheorem{thm}{Theorem}[section]{\bf}{\it}
\newtheorem{prop}[thm]{Proposition}
\newtheorem{cor}[thm]{Corollary}
\theoremstyle{definition}
\newtheorem{dfn}[thm]{Definition}
\theoremstyle{remark}
\newtheorem{rmk}[thm]{Remark}
\theoremstyle{remark}
\newtheorem{exm}[thm]{Example}
\newtheorem{assu}[thm]{Assumption}
\newcommand{\A}{\mathbb{A}}
\newcommand{\B}{\mathbb{B}}
\newcommand{\C}{\mathbb{C}}
\newcommand{\G}{\mathbb{G}}
\newcommand{\LL}{\mathbb{L}}
\newcommand{\Q}{\mathbb{Q}}
\newcommand{\RR}{\mathbb{R}}
\newcommand{\R}{\mathbb{R}}
\newcommand{\Z}{\mathbb{Z}}
\newcommand{\cat}{\mathbf{C}}
\newcommand{\catD}{\mathbf{D}}
\newcommand{\catT}{\mathbf{T}}
\newcommand{\ra}{\rightarrow}
\newcommand{\mcF}{\mathcal{F}}
\newcommand{\mcG}{\mathcal{G}}
\newcommand{\mcO}{\mathcal{O}}
\newcommand{\mcU}{\mathcal{U}}
\newcommand{\mfM}{\mathfrak{M}}
\newcommand{\mfX}{\mathfrak{X}}
\newcommand{\adj}[4]{#1\negmedspace: #2\rightleftarrows #3:\negmedspace #4}
\DeclareMathOperator{\an}{an}
\DeclareMathOperator{\Berk}{Berk}
\DeclareMathOperator{\coeq}{coeq}
\DeclareMathOperator{\colim}{colim}
\DeclareMathOperator{\cp}{cp}
\DeclareMathOperator{\cont}{cont}
\DeclareMathOperator{\ct}{ct}
\DeclareMathOperator{\eff}{eff}
\DeclareMathOperator{\et}{\acute{e}t}
\DeclareMathOperator{\Et}{Et}
\DeclareMathOperator{\qc}{qc}
\DeclareMathOperator{\FormDA}{\bf{FormDA}}
\DeclareMathOperator{\Frob}{Frob}
\DeclareMathOperator{\Frobet}{Frob\acute{e}t}
\DeclareMathOperator{\Gal}{Gal}
\DeclareMathOperator{\holim}{holim}
\DeclareMathOperator{\hocolim}{hocolim}
\DeclareMathOperator{\Hom}{Hom}
\DeclareMathOperator{\id}{id}
\DeclareMathOperator{\perf}{{perf}}
\DeclareMathOperator{\Perf}{Perf}
\DeclareMathOperator{\Rep}{Rep}
\DeclareMathOperator{\RigSm}{RigSm}
\DeclareMathOperator{\sep}{sep}
\DeclareMathOperator{\Sing}{Sing}
\DeclareMathOperator{\Sm}{Sm}
\DeclareMathOperator{\Spa}{Spa}
\DeclareMathOperator{\Spec}{Spec}
\DeclareMathOperator{\Ch}{\bf{Ch}}
\DeclareMathOperator{\DA}{\bf{DA}}
\DeclareMathOperator{\DM}{\bf{DM}}
\DeclareMathOperator{\Mod}{\bf{-Mod}}
\DeclareMathOperator{\AnDA}{\bf{AnDA}}
\DeclareMathOperator{\PerfDA}{\bf{PerfDA}}
\DeclareMathOperator{\RigDA}{\bf{RigDA}}
\DeclareMathOperator{\RigDM}{\bf{RigDM}}
\DeclareMathOperator{\RigSH}{\bf{RigSH}}
\DeclareMathOperator{\FormSH}{\bf{FormSH}}
\DeclareMathOperator{\Psh}{\bf{Psh}}
\DeclareMathOperator{\Set}{\bf{Set}}
\DeclareMathOperator{\sSet}{\bf{sSet}}
\DeclareMathOperator{\Sh}{\bf{Sh}}
\DeclareMathOperator{\SH}{\bf{SH}}
\DeclareMathOperator{\sPsh}{\bf{sPsh}}
\DeclareMathOperator{\Top}{\bf{Top}}
\DeclareMathOperator{\sTop}{\bf{sTop}}
\DeclareMathOperator{\wRRig}{\widehat{\bf{Rig}}}
\DeclareMathOperator{\wRigDA}{\bf{sPerfDA}}
\newcommand{\wwRigDA}{\wRRig\!\DA}
\DeclareMathOperator{\Ho}{Ho}
\begin{document}
	\title{The Berkovich realization for rigid analytic motives}
	\author{Alberto Vezzani}
	\address{LAGA - Universit\'e Paris 13 (UMR 7539)\\99 Av. Jean-Baptiste Cl\'ement\\93430 Villetaneuse, France}
	\email{vezzani@math.univ-paris13.fr}
	\thanks{The author was supported by the ANR grant PERCOLATOR ANR-14-CE25-0002, the ANR grant PERGAMO ANR-18-CE40-0017 and a PEPS JCJC grant from INSMI (CNRS)}

\begin{abstract}
 We prove that the functor associating to a rigid analytic variety the singular complex of the underlying Berkovich topological space is motivic, and defines the maximal Artin quotient of a motive. We use this to generalize Berkovich's results on the weight-zero part of the \'etale cohomology of a variety defined over a non-archimedean valued field.
\end{abstract}

\maketitle

\tableofcontents

\section{Introduction}

One of the key features of any motivic theory  over a field $k$ is the existence of realization functors, that is functors from the corresponding category of motives to some category of vector spaces (with further structure, if possible) that would produce and generalise regulator maps, comparison theorems and periods. In this paper, we will adopt the language of Morel-Voevodsky-Ayoub (mixed, \'etale, derived, with coefficients in $\Lambda$) motives  $\DA_{\et}(k,\Lambda)$. 
For an introduction to this language, we refer to \cite{ayoub-etale,ayoub-th2,ayoub-icm,cd,cd-etale,mvw}. We will mostly be interested to the case $\Lambda\supset\Q$ so that adding transfers (and hence considering the categories $\DM(k,\Lambda)$)  makes no difference in the theory (see \cite{ayoub-etale} and \cite{vezz-DADM}) under suitable hypotheses.
 
Whenever $k$ is a subfield of $\C$ one can consider the Betti realization (see \cite{ayoub-note}), the $\ell$-adic realizations \cite{ayoub-etale} or the de Rham realization \cite{ayoub-h1} (possibly enriched, see \cite{ivorra-h}). The well-known comparison theorems show that they are all equivalent, up to a change of coefficients. Among other things, these functors can be used to define motivic Galois groups \cite{ayoub-h1}, and some conjectural formal properties of them (say, being conservative on compact objects) reflect some deep geometrical facts of the theory of algebraic varieties (see \cite{ayoub-concon}). We remark that the Betti and the de Rham cohomologies (as vector spaces, with no extra structure) can be extended to, and defined by means of  the category of complex analytic motives $\AnDA(\C,\Lambda)$ (equivalent to $\catD(\Lambda)$, see \cite[Theorem 1.8]{ayoub-note}).

Whenever the characteristic of $k$ is positive, the array of possible realizations is more limited. There are $\ell$-adic realizations (but comparison theorems are not present in full generality) constructed in \cite{ayoub-etale}. For $p$-adic realizations, a natural approach would consist in associating to a variety (more generally, a motive) over $k$ a rigid analytic ``variety'' (better saying, a rigid analytic motive) over complete valued field $K$ of characteristic $0$ and residue equal to $k$, and then using realization functors for such objects. The problem is hence transferred into producing realization functors for rigid analytic motives $\RigDA_{\et}(K,\Lambda)$. In \cite{vezz-rigidreal} we constructed a de Rham-like realization, giving rise to the rigid realization on $\DA_{\et}(k,\Lambda)$. In \cite{bamb-vez} we construct the $\ell$-adic realizations compatible with the ones of $\DA_{\et}(k,\Lambda)$. In this article, we deal with a Betti-like realization functor. We will also show that it is \emph{not} of the same nature as the previous ones (in particular, one can not expect it to be conservative).

The most naive approach, in analogy to the complex Betti realization constructed in \cite{dug-isa}, consists in considering the singular homology of the  Berkovich space $|X(C)|_{\Berk}$ underlying the base change of rigid analytic variety $X/K$  to a complete algebraically closed field $C$. The first result of this paper is to show that this approach works, at least on the category of effective \'etale motives $\RigDA^{\eff}_{\et}(K,\Lambda)$ (see Theorem \ref{mainQ}).
\begin{thm}\label{mainQin}
 There is a triangulated functor
	$$
	{\LL B^*}\colon{\RigDA^{\eff}_{\et}(K,\Lambda)}\ra{\catD(\Lambda)}
	$$
	such that, for any rigid analytic variety $X$ and any $n\in\Z$ 
	$$H_n(\LL B^*\Lambda(X))\cong H_{n}^{\Sing}(|X(C)|_{\Berk},\Lambda).$$  
\end{thm}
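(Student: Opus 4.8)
The plan is to obtain $\LL B^{*}$ as the essentially unique colimit-preserving triangulated (and monoidal) functor determined on representables by $\Lambda(X)\mapsto C^{\Sing}_{*}(|X(C)|_{\Berk},\Lambda)$, the singular chain complex of the Berkovich space. By construction $\RigDA^{\eff}(K,\Lambda)$ is the localization of the derived category of presheaves of $\Lambda$-modules on $\RigSm_{K}$ at the $\tau$-local equivalences (for the topology $\tau$ used to define it) and the $\B^{1}$-weak equivalences, and it is generated under colimits by the $\Lambda(X)$; so it suffices to exhibit a functor $\RigSm_{K}\to\catD(\Lambda)$ that satisfies $\tau$-(hyper)descent and inverts every projection $\B^{1}_{X}\to X$, and then to left Kan extend. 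As a first reduction, base change along $K\to C$ gives a monoidal colimit-preserving functor $\RigDA^{\eff}(K,\Lambda)\to\RigDA^{\eff}(C,\Lambda)$ sending $\Lambda(X)$ to $\Lambda(X_{C})$, so it is enough to build the realization over $C$ for the functor $X\mapsto C^{\Sing}_{*}(|X|_{\Berk},\Lambda)$ on $\RigSm_{C}$ and precompose. Throughout one invokes Berkovich's theorems that the analytic space underlying a smooth rigid variety over $C$ is locally contractible and, for quasi-compact $X$, compact (in general, paracompact), so that its singular homology coincides with the cohomology of the constant sheaf and satisfies Mayer--Vietoris descent and hyperdescent for admissible open covers.

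For $\B^{1}$-invariance I would write down the relative version of Berkovich's contraction of the disk. Over a point $x\in|X|_{\Berk}$ the fibre of $|\B^{1}_{X}|_{\Berk}\to|X|_{\Berk}$ is the closed unit disk over $\mcH(x)$, whose points are (up to the usual limits) disks $D(a,\rho)$ with $a\in\mcH(x)$, $|a|\le1$, $\rho\in[0,1]$; the map $H\colon|\B^{1}_{X}|_{\Berk}\times[0,1]\to|\B^{1}_{X}|_{\Berk}$, $(D(a,\rho),s)\mapsto D(a,\max(\rho,s))$, is continuous, is the identity at $s=0$, collapses each fibre to its Gauss point at $s=1$, and is stationary on the Gauss section. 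It is thus a deformation retraction of $|\B^{1}_{X}|_{\Berk}$ onto a copy of $|X|_{\Berk}$, so $C^{\Sing}_{*}(|\B^{1}_{X}|_{\Berk},\Lambda)\to C^{\Sing}_{*}(|X|_{\Berk},\Lambda)$ is a quasi-isomorphism. This is also why one remains at the effective level: the rigid Tate object is, up to shift, the reduced motive of a punctured disk, whose Berkovich space is contractible, so $\LL B^{*}$ annihilates all twists and cannot be $\otimes$-inverted.

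The crux is descent. For the Nisnevich topology it follows from excision: for a distinguished square, with $U'\to X$ \'etale and restricting to an isomorphism over the closed analytic complement $Z$ of an open $U\subseteq X$, the morphism $U'\to X$ is \'etale and radicial along $Z$, hence an isomorphism over an open neighbourhood $W\supseteq Z$; therefore the square of Berkovich spaces maps to the homotopy pushout of the open cover $|X|_{\Berk}=|W|_{\Berk}\cup|U|_{\Berk}$, and since singular chains carry homotopy pushouts of spaces to homotopy pushouts of complexes (Mayer--Vietoris), Nisnevich descent holds, its hypercomplete form following from the local contractibility and paracompactness recalled above. If $\tau$ is the \'etale topology one needs in addition that a finite \'etale Galois cover $Y\to X$ with group $\mfG$ presents $|Y|_{\Berk}\to|X|_{\Berk}$ as a homotopy quotient by $\mfG$, together with the hypothesis $\Lambda\supseteq\Q$, so that the corresponding \v{C}ech complex still computes $C^{\Sing}_{*}(|X|_{\Berk},\Lambda)$ ($B\mfG$ having trivial reduced $\Lambda$-homology) --- equivalently, one invokes the comparison $\RigDA^{\eff}_{\Nis}\simeq\RigDA^{\eff}_{\et}$. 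Granting descent and $\B^{1}$-invariance, the universal property produces $\LL B^{*}$; since both $\LL B^{*}\Lambda(-)$ and $C^{\Sing}_{*}(|\cdot|_{\Berk},\Lambda)$ send increasing unions along open immersions to filtered colimits, the identification on affinoids propagates to arbitrary $X$, and taking $H_{n}$ yields the displayed formula. Monoidality, finally, reduces on generators to the topological K\"unneth formula and to the fact that the proper surjection $|X\times_{C}Y|_{\Berk}\to|X|_{\Berk}\times|Y|_{\Berk}$ is a homology isomorphism over $C$.

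I expect descent to be the principal obstacle. In contrast with the complex-analytic Betti realization, the covers generating $\RigDA^{\eff}$ are not open covers of the Berkovich space, so one must exploit its fine topology --- Berkovich's local contractibility and paracompactness --- to translate motivic covers into topological ones; and for the \'etale topology one must control the contribution of finite \'etale covers, which is precisely where the hypothesis $\Lambda\supseteq\Q$ is needed.
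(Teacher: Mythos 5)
Your overall scaffolding — appeal to the universal property of the $\mcU_{dg}$-localization, check $\B^1$-invariance and $\tau$-(hyper)descent for $X\mapsto C^{\Sing}_*(|X|_{\Berk},\Lambda)$, then left Kan extend — is exactly the strategy the paper adopts, and your $\B^1$-invariance argument via a relative deformation retraction to the Gauss section is correct (the paper simply cites \cite{berkovich} and \cite{berk-contr} here). The real divergence, and the place where you give something genuinely weaker than what is proved, is \'etale descent.

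You handle Nisnevich squares by excision and then pass to the \'etale topology via Galois covers, claiming that the \v{C}ech complex of a finite Galois cover computes $C^{\Sing}_*(|X|_{\Berk},\Lambda)$ because $|Y|_{\Berk}\to|X|_{\Berk}$ is ``a homotopy quotient''. But $|X|_{\Berk}$ is the \emph{strict} quotient $|Y|_{\Berk}/\mfG$, and $\mfG$ acts with fixed points on $|Y|_{\Berk}$ (e.g.\ at Gauss-type points), so the map $|Y|_{\Berk}/\!/\mfG\to|Y|_{\Berk}/\mfG$ is not a weak equivalence and only becomes a $\Lambda$-homology equivalence after inverting $|\mfG|$. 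That is why you are forced to assume $\Lambda\supset\Q$. The paper's Theorem~\ref{mainQ}, however, holds for an \emph{arbitrary} ring $\Lambda$, so your argument does not recover the stated result. The paper avoids this issue entirely by a different mechanism: it first produces a Quillen adjunction with values in $\Top$ (Proposition~\ref{maintop}), then applies the Dugger--Hollander--Isaksen criterion to reduce \'etale hyperdescent to split basal hypercovers; splitness gives Str\o m--Reedy cofibrancy (the Dugger--Isaksen ``topological trick''), hence $\hocolim=\colim=\coeq(|U_1|\rightrightarrows|U_0|)$; and finally Berkovich's result \cite[Lemma 5.11]{berk-contr} that $|U_0|_{\Berk}\to|X|_{\Berk}$ is a \emph{topological quotient map} for an \'etale cover identifies this coequalizer with $|X|_{\Berk}$. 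No hypothesis on $\Lambda$ is needed, and the \'etale and Nisnevich cases are treated uniformly. Your excision argument for Nisnevich squares is fine (the paper notes it in a remark as an alternative), but the reduction of the \'etale case to a \v{C}ech/Galois calculation is precisely what the quotient-map theorem lets one sidestep. A secondary structural difference: you base change to $C$ before realizing; the paper's Theorem~\ref{mainQ} works directly over $K$ with $|X|_{\Berk}$, and the identification with $|X_C|_{\Berk}$ and the Galois enrichment are only introduced in Section~\ref{artq} under the standing assumption $\Lambda\supset\Q$.
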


On the other hand, this (co)homology theory is unsatisfying in many respects. Indeed, Berkovich spaces are ``too  contractible''. For example, $|\G_m(C)|_{\Berk}$ is (strongly) homotopically equivalent to a point, destroying therefore any information linked to monodromy and any hope to extend this realization to stable motives (obtained by formally inverting the Tate twist). On the other hand, some  results of Berkovich \cite{berk-tate} hint to the fact that this cohomology theory captures the weight-zero part of the other realizations, as he proves that for any algebraic variety $X$ over a discretely valued $K$ we have
\begin{equation}\label{w0B}
H^i_{\Sing}(|X(C)|_{\Berk},\Q_{\ell})\cong H^i_{\et}(X,\Q_{\ell})_0
\end{equation}
where the right hand side is the maximal sub-representation of $H^i_{\et}(X,\Q_\ell)$  on which (a lift of) Frobenius acts by roots of unity. The main result of this paper is to provide the following motivic interpretation/generalization of these formulas (see Theorem \ref{main0}).
\begin{thm}\label{main0in}
	Let $K$ be a complete valued field and $\Lambda$ be a $\Q$-algebra. The functor $\LL B^*$ can be enriched with a Galois action, so that $\LL B^*M$ is an Artin motive. Also, for any motive $M\in\RigDA^{\eff}_{\et}(K,\Lambda)$ there exists a canonical map 
	$$
	M\ra \LL B^* M
	$$
	which is universal  among maps from $M$ to an Artin motive. 
\end{thm}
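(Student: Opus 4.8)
\medskip\noindent\textbf{Setting up the target.} The plan is to first identify the category of Artin motives $\RigDA^{\eff}_{\mathrm{Art}}(K,\Lambda)$, defined as the localizing subcategory of $\RigDA^{\eff}(K,\Lambda)$ generated by the $\Lambda(\Spa L)$ with $L/K$ finite separable. Finite Galois descent for rational rigid analytic motives together with continuity (insensitivity to completions) gives equivalences $\RigDA^{\eff}(K,\Lambda)\simeq\RigDA^{\eff}(C,\Lambda)^{hG_K}$ and $\RigDA^{\eff}_{\mathrm{Art}}(K,\Lambda)\simeq\catD(\Lambda)^{hG_K}$, where $G_K=\Gal(K^{\mathrm{sep}}/K)=\Aut^{\mathrm{cont}}(C/K)$ and the right-hand side is the derived category of continuous $\Lambda[G_K]$-modules; over an algebraically closed base $\RigDA^{\eff}_{\mathrm{Art}}$ is just $\catD(\Lambda)$, generated by the unit $\mathds{1}=\Lambda(\Spa C)$. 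I would then \emph{define} the Galois-enriched functor as the composite
\[
\RigDA^{\eff}(K,\Lambda)\ \xrightarrow{\ \sim\ }\ \RigDA^{\eff}(C,\Lambda)^{hG_K}\ \xrightarrow{\ (\LL B^{*}_{C})^{hG_K}\ }\ \catD(\Lambda)^{hG_K}\ \simeq\ \RigDA^{\eff}_{\mathrm{Art}}(K,\Lambda),
\]
where $\LL B^{*}_{C}$ is the functor of Theorem \ref{mainQin} over the base $C$, promoted to a $G_K$-equivariant functor by the naturality of $X\mapsto|X_{C}|_{\Berk}$ in the complete algebraically closed field. That $\LL B^{*}M$ genuinely is an Artin motive is clear on the compact generators $\Lambda(X)$ ($X$ quasi-compact), since by semistable reduction the homotopy type of $|X_{C}|_{\Berk}$ is already defined over a finite extension of $K$ and hence the $G_K$-action factors through a finite quotient, and it then follows for every $M$ by passing to colimits.

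\medskip\noindent\textbf{Reduction to an algebraically closed base.} Under the identifications above, the inclusion $\RigDA^{\eff}_{\mathrm{Art}}(K,\Lambda)\hookrightarrow\RigDA^{\eff}(K,\Lambda)$ corresponds to $(-\otimes\mathds{1}\colon\catD(\Lambda)\to\RigDA^{\eff}(C,\Lambda))^{hG_K}$ --- indeed base change sends $\Lambda(\Spa L)$ to $\mathds{1}_{C}^{\oplus[L:K]}$ with its permutation action of $G_K$ on the set $\Hom_{K}(L,C)$ --- and $\LL B^{*}$ corresponds to $(\LL B^{*}_{C})^{hG_K}$ by construction. Since passing to homotopy fixed points preserves adjunctions and their units, it suffices to prove the statement over $C$: namely that $\LL B^{*}_{C}\colon\RigDA^{\eff}(C,\Lambda)\to\catD(\Lambda)$, composed with the equivalence $\catD(\Lambda)\simeq\RigDA^{\eff}_{\mathrm{Art}}(C,\Lambda)$, is left adjoint to the inclusion of the constant motives, with unit the canonical map $M\to\LL B^{*}_{C}M$.

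\medskip\noindent\textbf{The algebraically closed case.} The functor $\LL B^{*}_{C}$ preserves colimits, and it sends the quasi-compact generators $\Lambda(X)$ to the \emph{perfect} complexes $C_{*}^{\Sing}(|X_{C}|_{\Berk},\Lambda)$ --- perfect because $|X_{C}|_{\Berk}$ has the homotopy type of a finite CW-complex --- so it preserves compact objects and its right adjoint $\LL B_{C,*}$ preserves colimits. One first notes that $-\otimes\mathds{1}\colon\catD(\Lambda)\to\RigDA^{\eff}(C,\Lambda)$ is fully faithful, because $\Hom_{\RigDA^{\eff}(C,\Lambda)}(\mathds{1},\mathds{1}[n])$ is the cohomology of the point $\Spa C$ with coefficients in the ($\B^{1}$-invariant) constant sheaf $\Lambda$, hence $\Lambda$ in degree $0$ and zero otherwise; its essential image is therefore exactly $\RigDA^{\eff}_{\mathrm{Art}}(C,\Lambda)$. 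It then remains to show that $\LL B_{C,*}$ is fully faithful, i.e.\ that the counit $\LL B^{*}_{C}\LL B_{C,*}\to\mathrm{id}$ is an equivalence; as both functors preserve colimits it is enough to check this at the generator $\Lambda$, where it amounts to the canonical map $\mathds{1}\to\LL B_{C,*}\Lambda$ being an equivalence, equivalently to the comparison map
\[
H^{n}_{\mathrm{mot}}(Y,\Lambda(0))\ :=\ \Hom_{\RigDA^{\eff}(C,\Lambda)}(\Lambda(Y),\mathds{1}[n])\ \longrightarrow\ H^{n}_{\Sing}(|Y_{C}|_{\Berk},\Lambda)
\]
being an isomorphism for every $Y\in\RigSm/C$ and $n\in\Z$. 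Granting this, $\LL B_{C,*}$ identifies $\catD(\Lambda)$ with the localizing subcategory generated by $\LL B_{C,*}\Lambda=\mathds{1}$, namely $\RigDA^{\eff}_{\mathrm{Art}}(C,\Lambda)$, so $\LL B^{*}_{C}$ is the reflection onto the Artin motives and $M\to\LL B_{C,*}\LL B^{*}_{C}M$ is the required universal map.

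\medskip\noindent\textbf{The main obstacle} is precisely this comparison of weight-zero rigid motivic cohomology with Berkovich singular cohomology. Both sides are $\B^{1}$-invariant, satisfy open and \'etale descent, and --- the left-hand side because $\Lambda\supseteq\Q$, the right-hand side because a proper surjection of locally compact Hausdorff spaces is of universal cohomological descent --- also satisfy descent along proper surjections. Using semistable reduction for rigid analytic varieties, one may therefore replace $Y$, via the \v{C}ech nerves of a semistable alteration and then of an adapted affinoid cover, by the standard semistable building blocks, which up to $\B^{1}$-homotopy are products of copies of $\G_m^{\an}$ fibred over a simplex $\Delta^{k}$. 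For such a piece $U$ both sides compute $H^{*}(\Delta^{k},\Lambda)=\Lambda$: on the Berkovich side by Berkovich's deformation retraction onto the skeleton, and on the motivic side because in $\RigDA^{\eff}(C,\Lambda)$ the Tate summand of $\Lambda(\G_m^{\an})=\mathds{1}\oplus\Lambda(1)[1]$ contributes nothing in weight zero (the negative Tate twists of the unit vanish), so that $H^{*}_{\mathrm{mot}}(\G_m^{\an},\Lambda(0))=\Lambda$. Assembling these local identifications compatibly with all the descent spectral sequences is where the genuine work lies; by contrast, the coherence of the $G_K$-action and the passage from compact generators to arbitrary $M$ are formal.
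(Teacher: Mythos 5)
Your high-level plan---enrich $\LL B^*$ over the small \'etale site, identify Artin motives as the essential image of $\LL\iota^*$, reduce the adjunction to compact generators, and hinge everything on a comparison $\Hom(\Lambda(Y),\Lambda[n])\cong H^n_{\Sing}(|Y_C|_{\Berk},\Lambda)$---matches the structure of the paper's argument. But the route you take differs in two substantial ways, and in both the difference costs you.

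First, you phrase the enrichment as a Galois descent statement $\RigDA^{\eff}(K,\Lambda)\simeq\RigDA^{\eff}(C,\Lambda)^{hG_K}$ together with continuity along $K^{\sep}\hookrightarrow C$, and then take homotopy fixed points of $\LL B^*_C$. This is a much stronger black box than needed, and you do not justify it. The paper sidesteps it entirely: it builds $B_{\Gal(K)}(X)$ explicitly as $\holim_F C_{\Sing}(|X_F|_{\Berk},\Lambda)^{\Gal(C/L)}$ in complexes of \'etale sheaves on the small site (using the finiteness and eventual stabilization results of Berkovich, Proposition \ref{berkH}), checks the Quillen-adjunction property levelwise via the evaluation functors $ev_L$ (Remark \ref{evL}), and then reduces the adjunction with $\LL\iota^*$ to compact generators by the formal criterion of Proposition \ref{redtocp} combined with finite Galois base change. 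No homotopy-fixed-point formalism or abstract descent equivalence is invoked, and none is needed.

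Second---and this is the genuine gap---your "main obstacle," the comparison of weight-zero motivic cohomology with Berkovich singular cohomology, is only sketched, and your proposed route is considerably harder than the one the paper uses. You suggest semistable reduction for rigid analytic varieties, universal cohomological descent along proper surjections for $\RigDA^{\eff}$, and a direct computation on building blocks $\G_m^{\an}\times\Delta^k$. Each of these is a nontrivial input: rigid-analytic semistable reduction is delicate (and in positive residue characteristic not available in the generality you need); $h$-style descent for $\RigDA^{\eff}$ requires proof; and your computation $H^*_{\mathrm{mot}}(\G_m^{\an},\Lambda(0))=\Lambda$ via "negative Tate twists of the unit vanish" invokes cancellation, which in the effective rigid setting is itself a theorem. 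By contrast, the paper proves Proposition \ref{1isloc} in three lines by citing de Jong--van der Put ($H^n_{\et}(X,\Lambda)\cong H^n(|X|,\Lambda)$ for $\Lambda$ a $\Q$-algebra), Huber's overconvergence ($H^n(|X|,\Lambda)\cong H^n(|X|_{\Berk},\Lambda)$), and the fact that this agrees with singular cohomology; and then reduces the final verification of the adjunction not to semistable building blocks but to motives of potentially good reduction, which generate $\RigDA^{\eff}$ by Ayoub's Theorem 2.5.34 in \cite{ayoub-rig}, where Berkovich contractibility makes $\LL B^*\Lambda(X)=\Lambda[0]$ immediate. You should replace your descent-along-alterations sketch with these citations; as written, the central comparison in your proof is unproven.
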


 We point out that the result above implies not only the \emph{existence} of  a universal motivic Artin quotient (in the algebraic setting this is proved in \cite[Corollary 2.3.3]{ayoub-bv}) but also an explicit description of it in terms of Berkovich spaces: as an application, we can  answer positively to a conjecture of Ivorra and Sebag \cite{ivo-seb} and generalize Berkovich's formula \eqref{w0B}  to analytic varieties  (see Corollary \ref{maincor}) as follows.

\begin{cor}
Let $X$ be a quasi-compact rigid analytic variety (or more generally, a compact rigid analytic motive) over a  non-archimedean field $K$ with a 
finite residue field. We have the following isomorphism:
$$
H^i_{\Sing}(|X(C)|_{\Berk},\Q_{\ell})\cong H^i_{\et}(X,\Q_{\ell})_0.
$$
\end{cor}

In Section \ref{sbr} we prove Theorem \ref{mainQin} also in its simplicial variant (without coefficients) and in Section \ref{artq} we prove Theorem \ref{main0in}. In Section \ref{tilt} we show that the previous results are compatible with the motivic tilting equivalence of \cite{vezz-fw} defined whenever $K$ is perfectoid, while in Section \ref{etale} we deduce the formulas \eqref{w0B} via the \'etale realization functors.

\section{The  Berkovich realization}\label{sbr}

From now on, we consider a fixed base valued field $K$ as follows.

\begin{assu}\label{assu}
We let $K$ be a field which is complete with respect to a  non-trivial multiplicative valuation (of rank 1) $||\cdot||\colon K\ra \R_{\geq0}$.
\end{assu} 

The aim of this first section is to define a functor from the category of additive \'etale motives of rigid analytic varieties $\RigDA^{\eff}_{\et}(K,\Lambda)$ to the derived category of $\Lambda$-modules, such that the complex associated to the motive of a variety $X$ computes the singular (co-)homology of the Berkovich space $|X|_{\Berk}$ with coefficients in $\Lambda$. %

In order to define our functor, we will simply use the universal property of the categories of (effective, without transfers) motives, which we will now briefly recall in a more general setting for the convenience of the reader. All the results appear in \cite{dugger} (for the simplicial case) and  in \cite{gall-chou} (for the  case of complexes of presheaves) and we refer to these sources for definitions and proofs.

\begin{dfn}
Let  $\cat$ be any small category. We can endow the category $\sPsh(\cat)$ of simplicial presheaves on $\cat$ [resp the category $\Ch\Psh(\cat,\Lambda)$ of complexes of  presheaves of $\Lambda$-modules on $\cat$]  with the projective model structure, for which cofibrations and weak equivalences are defined point-wise. This defines a model category $\mcU\cat $ [resp. a $\Lambda$-enriched model category  $\mcU_{dg}\cat=\mcU_{\Ch(\Lambda)}\cat$]. 
\end{dfn}

The Yoneda embedding $\cat\ra\Psh(\cat)$ can be composed with the functor $\Set\ra\sSet$ sending any set to the constant simplicial set [resp. the functor $\Set\ra\Lambda\Mod$ sending a set to the free $\Lambda$-module attached to it]. This defines a Yoneda-like embedding $y\colon\cat\ra\mcU\cat$ [resp. $y\colon\cat\ra\mcU_{dg}\cat$]   which is universal in the following sense.

\begin{prop}[\cite{gall-chou,dugger}]\label{martin}
	Let $\gamma\colon\cat\ra\catD$ be any functor, and suppose $\catD$ is endowed with a [$\Lambda$-enriched] model category structure. There exists a Quillen functor  $L\colon \mcU\cat\ra\catD$ [resp. a Quillen functor $L\colon \mcU_{dg}\cat\ra\catD$ of $\Lambda$-enriched model categories] such that the induced triangle 
	$$\xymatrix{
				\cat\ar[r]^{\gamma}\ar[d]_{y}&\catD\\
				\mcU\cat\ar[ur]_{L}		
			}
		$$
	is commutative up to a weak equivalence $L\circ y\Rightarrow\gamma$. Moreover $L$ is unique up to a contractible choice.
\end{prop}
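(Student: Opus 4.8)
The plan is to follow Dugger's construction of universal homotopy theories: one thinks of $\mcU\cat$ (resp.\ $\mcU_{dg}\cat$) as the free homotopy-cocompletion of $\cat$ (resp.\ as a $\Ch(\Lambda)$-enriched category), so that the wanted functor $L$ is forced to be a derived left Kan extension of $\gamma$ along the Yoneda embedding $y$, with right adjoint an enriched mapping-complex functor. First I would replace $\gamma$ by a weakly equivalent functor with good values. In the $\Lambda$-enriched setting this is straightforward: composing $\gamma$ with a functorial cofibrant replacement in $\catD$ gives $\tilde\gamma\colon\cat\to\catD$ landing in cofibrant objects together with a natural weak equivalence $\tilde\gamma\Rightarrow\gamma$. (In the plain simplicial case one cannot tensor over $\sSet$ unless $\catD$ is already simplicial, so $\gamma$ must instead be replaced by a cosimplicial resolution $\Gamma\colon\cat\to\catD^{\Delta}$ that is objectwise Reedy-cofibrant; producing such a $\Gamma$ functorially, and controlling how it depends on choices, is the one genuinely technical ingredient of the non-enriched version.)

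Next I would set $R\colon\catD\to\mcU_{dg}\cat$ by $R(X)(c)=\HHom_{\catD}(\tilde\gamma(c),X)$, which is a presheaf of complexes on $\cat$ because $\tilde\gamma$ is a functor, and take $L$ to be its left adjoint. This adjoint exists because $\mcU_{dg}\cat$ is a presheaf category, and it is computed by the coend $L(F)=\int^{c\in\cat}F(c)\otimes\tilde\gamma(c)$ formed with the tensoring of the $\Ch(\Lambda)$-enriched category $\catD$ over $\Ch(\Lambda)$; equivalently, $L$ is the unique colimit-preserving $\Ch(\Lambda)$-enriched functor with $L(y(c))\cong\tilde\gamma(c)$. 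Hence $L\circ y\cong\tilde\gamma$, and the required weak equivalence $L\circ y\Rightarrow\gamma$ is the one produced above. To see $(L,R)$ is a Quillen adjunction I would use that the projective model structure on $\mcU_{dg}\cat$ is cofibrantly generated by the maps $y(c)\otimes i$ for $c\in\cat$ and $i$ a generating (trivial) cofibration of $\Ch(\Lambda)$; since $L$ preserves tensors we get $L(y(c)\otimes i)\cong\tilde\gamma(c)\otimes i$, and because $\tilde\gamma(c)$ is cofibrant the pushout-product axiom for the $\Ch(\Lambda)$-enriched model category $\catD$ forces $\tilde\gamma(c)\otimes i$ to be a (trivial) cofibration of $\catD$. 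Thus $L$ sends generating (trivial) cofibrations to (trivial) cofibrations and is left Quillen.

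Finally, for the uniqueness statement I would organize the pairs $(L',\,L'\circ y\Rightarrow\gamma)$ with $L'$ left Quillen into a category (or pass to its nerve) and show it is weakly contractible. It is nonempty by the construction above. For contractibility, one notes that any left Quillen $L'$ preserves colimits and sends the (cofibrant) representables $y(c)$ to cofibrant objects, so $L'$ is the left Kan extension of its restriction $L'\circ y$, which is an objectwise-cofibrant resolution of $\gamma$; conversely every such resolution arises this way. The category of factorizations is therefore equivalent to the category of objectwise-cofibrant resolutions of $\gamma$ (resp.\ of cosimplicial resolutions in the unenriched case), and the latter is contractible by the standard theory of resolutions. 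I expect this last step --- upgrading ``unique up to weak equivalence'' to ``unique up to a contractible choice'', and, in the simplicial variant, setting up the framings so that the coend defining $L$ is homotopically meaningful --- to be the main obstacle; all the details are carried out in \cite{dugger} and \cite{gall-chou}.
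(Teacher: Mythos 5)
The paper does not itself prove Proposition~\ref{martin}: it defers entirely to \cite{dugger} and \cite{gall-chou}, treating the statement as a black box. Your outline faithfully reproduces the argument of those references (cofibrant, resp.\ cosimplicial, resolution of $\gamma$; the coend formula exhibiting $L$ as an enriched left Kan extension along $y$; the Quillen check against the generating (trivial) cofibrations $y(c)\otimes i$; contractibility of the category of factorizations via contractibility of the category of resolutions), so it is correct and takes the same approach as the cited sources.
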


Suppose now that $\cat$ is endowed with a Grothendieck topology $\tau$ and a choice of an object $I$. Under some hypotheses, we can consider the Bousfield localization of $\mcU\cat $ and $\mcU_{dg}\cat $ with respect to $\tau$-hypercovers and $I$-homotopy, in the following sense.

\begin{dfn}\label{loc}
Let $T$ be a dense set of $\tau$-hypercovers. Consider the set of arrows $S$ in $\mcU\cat$ [resp. $\mcU_{dg}\cat$] given by $$\begin{aligned}
S=&\{\hocolim h(\mcU_\bullet)[i]\ra h(X)[i]\colon (\mcU_\bullet\ra X)\in T, i\in\Z\}\cup\\
&\{h(X\times I)[i]\ra h(X)[i]\colon X\in\cat, i\in\Z\}.
\end{aligned}$$ The [dg-enriched] Bousfield localization of $\mcU\cat$ with respect to $S$ will be denoted it by $\mcU\cat/(\tau,I)$ [resp. $\mcU_{dg}\cat/(\tau,I)$]. 
\end{dfn}

The model categories above  still enjoy a universal property, by composing Proposition \ref{martin} with the universal property of localizations.

\begin{prop} [{\cite[Corollary 5.14]{gall-chou}}]\label{martin2}
	Let $\gamma\colon\cat\ra\catD$ be any functor, and suppose $\catD$ is endowed with a [$\Lambda$-enriched] model category structure. The Quillen functor $L$ of Proposition \ref{martin} factors over $ \mcU\cat/(\tau,I)$ [resp. $\mcU_{dg}\cat/(\tau,I)$] whenever  $\gamma(X\times I)\ra\gamma(X)$ is a weak equivalence  and $\hocolim\gamma(\mcU_\bullet)\cong\gamma(X)$ for each $\tau$-hypercover $\mcU_\bullet\ra X$ in $\cat$. 
\end{prop}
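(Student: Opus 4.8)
The plan is to reduce everything to the universal property of left Bousfield localization. Recall (as in Hirschhorn's book) that since $\mcU\cat$ and $\mcU_{dg}\cat$ are cofibrantly generated, left proper, and enriched over $\sSet$ resp.\ $\Ch(\Lambda)$, a left Quillen functor out of one of these categories descends to a left Quillen functor out of its left Bousfield localization $\mcU\cat/(\tau,I)$ resp.\ $\mcU_{dg}\cat/(\tau,I)$ if and only if its total left derived functor sends every arrow of the localizing set $S$ of Definition \ref{loc} to a weak equivalence in $\catD$. Since the localization functor is the identity on underlying categories, the resulting factorization is just $L$ itself, now regarded on the localized model structure; in particular uniqueness up to contractible choice is inherited from Proposition \ref{martin}. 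So it suffices to check that $\mathbb{L}L$ inverts the two families of arrows in $S$.

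Two preliminary remarks make this routine. First, every representable $h(X)$ is cofibrant in the projective model structure (the map $\emptyset\ra h(X)$ is a generating cofibration, resp.\ $h(X)$ is a degreewise-projective presheaf of $\Lambda$-modules concentrated in degree $0$), so the natural weak equivalence $L\circ y\Rightarrow\gamma$ of Proposition \ref{martin} gives $\mathbb{L}L(h(X))=L(h(X))\simeq\gamma(X)$, naturally in $X\in\cat$. Second, $L$ is a left Quillen functor of enriched model categories, so $\mathbb{L}L$ commutes with homotopy colimits and with the shifts $[i]$. Combining these: for an arrow $h(X\times I)[i]\ra h(X)[i]$ of the second family, $\mathbb{L}L$ produces $\gamma(X\times I)[i]\ra\gamma(X)[i]$, which is a weak equivalence because $\gamma(X\times I)\ra\gamma(X)$ is one by hypothesis; and for an arrow $\hocolim h(\mcU_\bullet)[i]\ra h(X)[i]$ of the first family, naturality of $L\circ y\Rightarrow\gamma$ identifies the objectwise-cofibrant diagram $h(\mcU_\bullet)$ with $\gamma(\mcU_\bullet)$ up to a levelwise weak equivalence, so $\mathbb{L}L$ yields $\hocolim\gamma(\mcU_\bullet)[i]\ra\gamma(X)[i]$, a weak equivalence by the hypothesis $\hocolim\gamma(\mcU_\bullet)\cong\gamma(X)$. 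Hence $\mathbb{L}L$ inverts all of $S$, and $L$ factors as claimed.

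The one point requiring genuine care is the interchange of $\mathbb{L}L$ with the homotopy colimit appearing in the first family of $S$: one must know that $h(\mcU_\bullet)$ is a diagram of cofibrant objects whose homotopy colimit is modeled by the (enriched) bar construction, and that a left Quillen functor of enriched model categories preserves this bar construction up to a natural zig-zag of weak equivalences. This—together with the fact, already built into Definition \ref{loc}, that localizing at the dense set $T$ of $\tau$-hypercovers suffices—is where the actual work lies, and it is precisely the content of \cite[Corollary 5.14]{gall-chou}.
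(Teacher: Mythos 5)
Your proof is correct, and it unpacks what the paper leaves as a bare citation to \cite[Corollary 5.14]{gall-chou}. You correctly reduce everything to the universal property of left Bousfield localization (Hirschhorn's criterion: a left Quillen functor descends iff its total derived functor inverts the localizing arrows), observe that representables — and hence the levels of a hypercover, being coproducts of representables — are projectively cofibrant so that $\mathbb{L}L(h(X))\simeq\gamma(X)$, and use that total left derived functors of left Quillen pairs preserve homotopy colimits (and, in the enriched case, shifts), so that the two families of arrows in $S$ are sent to $\gamma(X\times I)\ra\gamma(X)$ and $\hocolim\gamma(\mcU_\bullet)\ra\gamma(X)$ respectively, which are weak equivalences by hypothesis. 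You are also right to flag the two technical points that carry the real weight: the compatibility of $\mathbb{L}L$ with the (bar-construction model of) homotopy colimit on objectwise-cofibrant diagrams, and the Dugger--Hollander--Isaksen fact that localizing at a dense set of hypercovers suffices. The paper itself offers no independent argument — it defers entirely to Gallauer--Choudhury — and what you have written is precisely the standard unwinding of that reference, so the two ``proofs'' agree in substance.

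One minor remark for precision: for the projective model structure on $\sPsh(\cat)$ the generating cofibration at level $0$ is $\partial\Delta^0\otimes h(X)\ra\Delta^0\otimes h(X)$, i.e.\ $\emptyset\ra h(X)$, which is what you meant; and the existence of the Bousfield localization requires not just ``cofibrantly generated and left proper'' but cellularity (or combinatoriality), which does hold here since these are projective model structures on presheaf categories — worth saying explicitly, since that is where the hypotheses of Hirschhorn's theorem are verified.
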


\begin{rmk}
Thanks to its universal property, the construction of $\mcU(\cat)/(\tau,I)$ is functorial on the triples $(\cat,\tau,I)$ in some suitable sense.
\end{rmk}

\begin{rmk}\label{rmk:dense}
	One may omit the choice of the object $I$ and consider only the localization with respect to $\tau$. In this case, the  category $\mcU(\cat)/(\tau) $ [resp. $\mcU_{dg}(\cat)/(\tau) $] is Quillen  equivalent to the categories of simplicial [resp. complexes of] sheaves over $\cat$ endowed with its local model structure (see \cite[Corollary 4.4.42]{ayoub-th2}). In particular, one can replace $\cat$ by a $\tau$-dense full subcategory without changing the homotopy category. We remark  that the homotopy category $\Ho\mcU_{dg}(\cat)/(\tau) $ is then equivalent to the (unbounded) derived category $\catD(\Sh_{\tau}(\cat,\Lambda))$ of sheaves of $\Lambda$-modules.
	\end{rmk}

\begin{exm}
	Suppose we take $\cat=\Sm/k$ the category of smooth varieties over a field $k$. We can endow it with the \'etale topology   and we can select $I$ to be the affine line $\A^1_k$. The homotopy category $\Ho(\mcU_{dg}(\Sm/k)/(\et,\A^1))) $  is the category of effective Voevodsky motives without transfers $\DA^{\eff}(k,\Lambda)$.
\end{exm}

\begin{exm}\label{ex:Gal}
	Consider $\cat$ resp. $\cat'$ to be the category of finite \'etale extensions resp. finite Galois extensions of a field $K$ and endow them with the \'etale topology. The two homotopy categories are canonically equivalent to $\catD\Sh(\Et/K,\Lambda)$ which we will denote by $\catD_{\et}(K,\Lambda)$ following \cite{ayoub-h2}.%
\end{exm}

We recall the following classical statement.

\begin{prop}[{\cite[Remark 1.21]{ayoub-h2}}]\label{sigma}
	Fix a separable closure $C$ of $K$. The category $\Sh_{\et}(K,\Lambda)$ is equivalent to the category of continuous $\Gal(C/K)$-representations by means of the functor $$\sigma^*\colon \mcF\mapsto\varinjlim_{ L\subset C, L/K \text{ finite Galois}}\mcF(L).$$
	In particular, the category %
	$\catD_{\et}(K,\Lambda)$ is equivalent to the derived category of the (semi-simple) category of %
	continuous $\Lambda$-representations of $\Gal(K^{\sep}/K)$.
\end{prop}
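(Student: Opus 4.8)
The statement is the classical dictionary, going back to SGA~4, between étale sheaves on the spectrum of a field and Galois modules, and I would deduce it from Galois theory together with the sheaf axiom. Write $K^{\sep}\subset C$ for the separable closure of $K$ and $G:=\Gal(K^{\sep}/K)=\varprojlim_{L}\Gal(L/K)$, the limit taken over the filtered poset of finite Galois subextensions $L/K$; this is the profinite group denoted $\Gal(C/K)$ here. Every object of $\Et/K$ is a finite disjoint union of spectra of finite separable extensions of $K$, so the connected part of $\Et/K$ is, up to equivalence, the opposite of the category of transitive finite continuous $G$-sets, with covers the jointly surjective families; in particular one may compute $\Sh_{\et}(K,\Lambda)$ already on the $\et$-dense full subcategory $\cat'$ of finite Galois extensions.

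The functor $\sigma^{*}$ is nothing but the stalk at the geometric point $\Spec K^{\sep}\to\Spec K$: the étale neighbourhoods of this point are cofinally indexed by the finite Galois $L/K$ inside $K^{\sep}$, so $\sigma^{*}\mcF=\mcF_{\Spec K^{\sep}}=\varinjlim_{L}\mcF(L)$. Being a stalk functor it is exact and commutes with all colimits. The residual action is visible already on the diagram: for $g\in G$ and $L/K$ finite Galois the image $g|_{L}\in\Gal(L/K)$ acts on $\mcF(L)$ by functoriality, compatibly with the transition maps, hence acts on the colimit; the action is continuous since any section lifts to some $\mcF(L)$ and is therefore fixed by the open subgroup $\Gal(K^{\sep}/L)$.

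For the quasi-inverse I would send a continuous $\Lambda[G]$-module $V$ to the additive presheaf $\mcF_{V}$ with $\mcF_{V}(\Spec L)=V^{\Gal(K^{\sep}/L)}$ for $L/K$ finite separable, the choice of an embedding $L\hookrightarrow K^{\sep}$ being irrelevant up to the transitive $G$-action. That $\mcF_{V}$ is an étale sheaf reduces, by refining an arbitrary cover by a connected Galois cover together with the partition into components, to the equalizer condition for a single $\Spec L\to\Spec K$ with $L/K$ Galois, which — using $L\otimes_{K}L\cong\prod_{\Gal(L/K)}L$ — is exactly the identity $V^{G}=(V^{N})^{G/N}$ for $N=\Gal(K^{\sep}/L)$, i.e.\ the computation of invariants in two stages. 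One then checks the unit and counit: $\sigma^{*}\mcF_{V}=\varinjlim_{L}V^{\Gal(K^{\sep}/L)}=V$ by continuity of $V$, while for a sheaf $\mcF$ the sheaf axiom applied to the Galois covers $\Spec L'\to\Spec L$ gives $\mcF(L)=\mcF(L')^{\Gal(L'/L)}$, and taking the colimit over $L'$ (which commutes with the profinite invariants, every module in sight being smooth) yields $\mcF(L)=(\sigma^{*}\mcF)^{\Gal(K^{\sep}/L)}=\mcF_{\sigma^{*}\mcF}(\Spec L)$, naturally in $L$. Hence $\sigma^{*}$ is an equivalence of abelian categories.

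Finally, $\sigma^{*}$ is an exact equivalence between Grothendieck abelian categories, so it induces an equivalence of unbounded derived categories $\catD_{\et}(K,\Lambda)=\catD(\Sh_{\et}(K,\Lambda))\simeq\catD(\Rep_{\Lambda}^{\cont}(G))$; when $\Lambda\supseteq\Q$ every continuous representation is the filtered union of its finite-quotient subrepresentations, each semisimple by Maschke's theorem, which is the sense in which $\Rep_{\Lambda}^{\cont}(G)$ is ``semi-simple''. I do not expect a genuine obstacle here: the only points needing care are the dévissage of an arbitrary étale cover to Galois covers in the verification of the sheaf axiom, the fact that the étale topology on $\Et/K$ is subcanonical so that no sheafification intervenes when evaluating on $\Spec L$, and — if $K$ is imperfect — the harmless observation that $\Et/K$ only ever sees $K^{\sep}$ rather than any inseparable extension.
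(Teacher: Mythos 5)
Your argument is correct and is the standard one; the paper gives no proof of its own but simply cites Ayoub's Remark 1.21, which establishes the equivalence by exactly this route (identifying $\sigma^*$ as the stalk at $\Spec K^{\sep}\to\Spec K$, constructing the quasi-inverse $V\mapsto(\Spec L\mapsto V^{\Gal(K^{\sep}/L)})$, and using Galois descent $\mcF(L)=\mcF(L')^{\Gal(L'/L)}$ to check the counit). The one point worth stating explicitly in a write-up, which you only gesture at, is that the functoriality of $\Spec L\mapsto V^{\Gal(K^{\sep}/L)}$ is cleanest if one sets $\mcF_V(\Spec L)=\mathrm{Maps}_G\bigl(\Hom_K(L,K^{\sep}),V\bigr)$, removing any dependence on a choice of embedding.
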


Along this article, we will adopt Huber's notations for rigid analytic varieties, fully faithfully embedded in the category of adic spaces (see \cite{huber}). For any Tate algebra $R$ we will write $\Spa R$ for the space $\Spa(R,R^\circ)$. For a rigid analytic variety $X$, the underlying topological space $|X|$ is a spectral space. It coincides with the sober topological space associated to the G-topos of Tate (see \cite[1.1.11]{huber}). It has a maximal Hausdorff quotient $|X|_{\Berk}$ which coincides with Berkovich's definition (see \cite{berkovich}) of the topological space of an analytic variety (\cite[Lemma 8.1.8 and Proposition 8.3.1]{huber}). We will use this last topological space to define our realization functor, as its properties are more akin to the classical complex situation.

\begin{exm}\label{ex:Rig}
	Suppose we take $\cat=\RigSm/K$ the category of smooth rigid analytic varieties over  $K$ (see \cite{BGR}). We can endow it with the \'etale topology  (defined in \cite{huber}) and we can select $I$ to be the closed disc $\B^1=\Spa K\langle T\rangle$. The homotopy category $\Ho(\mcU_{dg}(\RigSm/K)/(\et,\B^1)) $  is the category of rigid analytic Ayoub motives  (effective, \'etale, without transfers) $\RigDA^{\eff}_{\et}(K,\Lambda)$ (see \cite{ayoub-rig}). Whenever $K$ is perfect, we can also consider the $\Frobet$-topology, that is the one generated by \'etale covers and the relative Frobenius maps. In this case we obtain the category $\RigDA^{\eff}_{\Frobet}(K,\Lambda)$. By means of Remark \ref{rmk:dense}, in the construction we can replace  the category $\RigSm/K$ with its full subcategory of quasi-compact smooth varieties $\RigSm^{\qc}/K$ (or even, the one of smooth affinoid varieties) without changing the motivic category.
\end{exm}

\begin{rmk}\label{DADM}
We will mostly be interested in the case $\Q\subset\Lambda$. In this setting, the category of motives \emph{with transfers} $\RigDM^{\eff}(K,\Lambda)$ defined in \cite{ayoub-rig} is canonically equivalent to $\RigDA^{\eff}_{\Frobet}(K^{\Perf},\Lambda)$ where $K^{\Perf}$ is the completed perfection of $K$ (see \cite{vezz-DADM} and \cite[Proposition 5.20]{vezz-tilt4rigid}). One can therefore rephrase the main theorems of this article in terms of $\RigDM^{\eff}(K,\Lambda)$.
\end{rmk}

We now prove the existence of the simplicial version of the Berkovich realization. Here, we endow the category of topolgical spaces $\Top$ with the classical Quillen model structure, see \cite[Section 2.4]{hovey}.

\begin{prop}\label{maintop}	
The functor $X\mapsto|X|_{\Berk}$ induces a Quillen adjunction
	$$
	\mcU\Psh(\RigSm/K)/(\et,I)\rightleftarrows\Top.
	$$
	If $K$ is perfect, the adjunction above descends to 
	$$
\mcU\Psh(\RigSm/K)/(\Frobet,I)\rightleftarrows\Top.
$$
\end{prop}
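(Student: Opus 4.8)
The plan is to apply Proposition \ref{martin2} with $\cat=\RigSm/K$, $\catD=\Top$ with the Quillen model structure, $\tau=\Frobet$, and $I=\B^1_K$. By Proposition \ref{martin} we already get a Quillen functor $L\colon\mcU\Psh(\RigSm/K)\ra\Top$ extending the functor $\gamma\colon X\mapsto|X|_{\Berk}$ (where $|-|_{\Berk}$ denotes the underlying Berkovich topological space, viewed as an object of $\Top$ and functorially in $X$). So the content is entirely in checking the two localization hypotheses of Proposition \ref{martin2}: that $\gamma$ sends $\B^1$-projections to weak equivalences, and that it sends $\Frobet$-hypercovers to homotopy colimits. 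Once these are verified, $L$ descends to the localization $\mcU\Psh(\RigSm/K)/(\Frobet,\B^1)$, which is exactly the claim.

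First I would check $\B^1$-invariance. For any rigid analytic variety $X$, the projection $X\times\B^1_K\ra X$ induces on Berkovich spaces a map $|X\times\B^1_K|_{\Berk}\ra|X|_{\Berk}$; I would show this is a homotopy equivalence of topological spaces. Locally on $X$ this reduces to the statement that $|\B^1_K|_{\Berk}$ (more precisely the fiber, a closed polydisc over a point of $|X|_{\Berk}$) is contractible, and that the contraction can be performed continuously in families --- this is the standard fact that Berkovich closed discs deformation-retract onto their Gauss point, as in Berkovich's work. One should be slightly careful to use a retraction that is natural enough to glue over an admissible affinoid cover of $X$; the Gauss-point retraction is canonical, so this causes no trouble, but it is worth stating explicitly that $|-|_{\Berk}$ commutes with the relevant admissible open covers so that the local verification suffices.

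Next I would check $\Frobet$-hyperdescent: for a $\Frobet$-hypercover $\mcU_\bullet\ra X$ in $\RigSm/K$, the natural map $\hocolim_{[n]}|\mcU_n|_{\Berk}\ra|X|_{\Berk}$ is a weak equivalence in $\Top$. This splits into two parts. For the \'etale part, I would use that the functor $X\mapsto|X|_{\Berk}$ sends an \'etale cover to an open cover of topological spaces (étale maps of rigid varieties induce local homeomorphisms on Berkovich spaces, and a cover induces a surjective such family), together with the fact that for a topological hypercover the canonical map from the homotopy colimit of the nerve to the base is a weak equivalence --- a classical statement for the Quillen model structure on $\Top$, obtainable from the corresponding statement for simplicial sets via $\Sing$. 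For the Frobenius part one must observe that when $K$ is perfect the relative Frobenius $X^{(1)}\ra X$ is a homeomorphism on underlying Berkovich spaces (it is a purely inseparable, finite, radicial map), so it is sent to a weak equivalence and adding it to the topology changes nothing; hence $\Frobet$-hyperdescent follows from $\et$-hyperdescent.

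The main obstacle I anticipate is the étale-hyperdescent step: passing from ordinary Čech descent for an étale cover --- which is essentially the statement that $|-|_{\Berk}$ turns an étale cover into an open cover and that open covers satisfy Mayer--Vietoris/Čech descent in $\Top$ --- to descent along arbitrary (unbounded) hypercovers. The cleanest route is to note that one may compute everything after applying the singular functor $\Sing\colon\Top\ra\sSet$ (which is the right Quillen functor of the adjunction $|-|\dashv\Sing$ and so preserves the relevant weak equivalences), and then invoke the fact that $\Sing$ of a hypercover of topological spaces by open covers is a hypercover of simplicial sets, for which descent (i.e. that the associated homotopy colimit recovers the base) is standard --- this is exactly the kind of statement packaged in \cite{dugger} and used to identify $\mcU\cat/(\tau)$ with sheaves. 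I would therefore phrase the verification of the hypotheses of Proposition \ref{martin2} through $\Sing$, reducing both the $\B^1$-step and the hyperdescent step to well-known facts about simplicial sets and Berkovich spaces.
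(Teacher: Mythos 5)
Your framework (verify the two hypotheses of Proposition \ref{martin2}), your $\B^1$-invariance step (Berkovich's contractibility/deformation retraction to Gauss points), and your Frobenius step (relative Frobenius is a homeomorphism on Berkovich spaces) all line up with the paper. The gap is in the \'etale hyperdescent step, and it is a genuine one.

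You assert that ``\'etale maps of rigid varieties induce local homeomorphisms on Berkovich spaces,'' so that an \'etale cover $\mcU_\bullet\ra X$ becomes a hypercover by open subsets to which Dugger--Isaksen-style topological hyperdescent applies directly. This is false: an \'etale morphism of rigid analytic varieties is finite flat and unramified locally on the source, and may have degree $>1$; on Berkovich spaces it is then a branched-looking quotient rather than a local homeomorphism (consider $z\mapsto z^n$ on $\G_m^{\an}$ near the Gauss point, where the map is $n$-to-$1$ and identifies Galois conjugate points). Consequently $|\mcU_\bullet|_{\Berk}\ra|X|_{\Berk}$ is not an open hypercover, and the simplicial/$\Sing$ reduction you propose does not go through as stated. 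What \emph{is} true, and is precisely the input the paper leans on, is the much weaker fact that $|U_0|_{\Berk}\ra|X|_{\Berk}$ is a topological quotient map (Berkovich, \cite[Lemma 5.11]{berk-contr}). The paper compensates for this weaker input by first using \cite[Theorem 8.6]{dhi} to reduce to \emph{split basal} hypercovers, then applying the Dugger--Isaksen ``topological trick'' \cite[Theorem A.7]{dug-isa} to compute the homotopy colimit in the Str\o m model structure, where splitness implies Reedy cofibrancy, so the homotopy colimit collapses to the ordinary coequalizer of $|U_1|_{\Berk}\rightrightarrows|U_0|_{\Berk}$; this coequalizer is then identified with $|X|_{\Berk}$ using the quotient-map property. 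Your proof would need to replace the false ``local homeomorphism / open cover'' claim by this chain of reductions (or some equivalent device) to be correct.
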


\begin{proof}
	We first prove that $B^*$ sends the maps $\mcU_\bullet\ra X$ to weak equivalences, for any \'etale hypercover $\mcU_\bullet$ of $X$. To this aim, by   \cite[Theorem 8.6]{dhi} it suffices to show that  $|\bigsqcup_i{U_i}|_{\Berk}\cong\bigsqcup_i|U_i|_{\Berk}$ for any finite set of rigid varieties $\{U_i\}_{i\in I}$ (which is obvious)  and that $\LL B^*(\hocolim U_\bullet)\cong\LL B^*X$ where $\ U_\bullet$ is a \emph{split basal }\'etale hypercover of $X$. This means in particular that:
	\begin{enumerate}[(i)]
		\item $U_0$ is representable, and $U_0\ra X$ is \'etale surjective.
		\item\label{split} There exists a  representable presheaf $N_k$ for each $k$ such that $U_i=\bigsqcup_\sigma N_\sigma$ where $\sigma$ runs among surjections $[i]\ra[k]$ in the simplex category  $\mathbf{\Delta}$ with variable $k$, and $N_\sigma$ is a copy of $N_k$.
	\end{enumerate}
	We warn the reader that we constantly  abuse  notation by indicating with $U$ both a space and the presheaf it represents.

	As we already remarked, the functor $B^*$ preserves coproducts. In particular, the simplicial object $|U_\bullet|_{\Berk}$ is also a split simplicial topological space, that is, it enjoys property \eqref{split} with ``topological space" in place of ``representable presheaf".
	
	The homotopy colimit functor is the left derived Quillen functor of the $\colim$ functor, targeting the category  $\Top$, endowed with usual Quillen model structure,  from  the diagram category $\sTop$, endowed with the induced Reedy model structure.  By the ``topological trick" of Dugger-Isaksen \cite[Theorem A.7]{dug-isa} this homotopy colimit is weakly equivalent to the one computed with respect to the Str\o m model category structure on $\Top$ (for which all objects are fibrant and cofibrant, and weak equivalences are actual homotopy equivalences) and the induced  Reedy model structure on $\sTop$ . 
	
	Since $|U_\bullet|_{\Berk}$ is split, we deduce that it is Str\o m-cofibrant in $\sTop$.  In particular, its homotopy colimit coincides with its colimit, which is in turn isomorphic to 
	\[
	\coeq\left(	|U_1|_{\Berk}  \rightrightarrows |U_0|_{\Berk}\right)
	\]
	On the other hand, by definition of an \'etale hypercover, the map $|U_1|_{\Berk}\ra|U_0\times_XU_0|_{\Berk}$ is surjective (see \cite[Lemma 5.11]{berk-contr}). The same is true for the map $|U_0\times_XU_0|_{\Berk}\ra|U_0|_{\Berk}\times_{|X|_{\Berk}}|U_0|_{\Berk}$ (see \cite[Lemma 3.9(i)]{huber2}). We deduce that the maps of the diagram above factor over $E\colonequals |U_0|_{\Berk}\times_{|X|_{\Berk}}|U_0|_{\Berk}$. The map $U_0\ra X$ is an \'etale cover, and hence $|U_0|_{\Berk}\ra|X|_{\Berk}$ is a quotient map of topological spaces (see \cite[Lemma 5.11]{berk-contr}), with respect to the equivalence relation $E$. The coequalizer above is then simply $|U_0|_{\Berk}/E=|X|_{\Berk}$ as wanted.
	
	We now assume $K$ is perfect of positive characteristic and we prove that $B^*$ sends the relative Frobenius maps $X^{(1)}\ra X$ to weak equivalences. This follows at once since the relative Frobenius map induces actually a homeomorphism $|X^{(1)}|\cong|X|$. This implies that the functor $B^*$ factors over the $\Frobet$-localization.

	We now claim that the  left derived functor $\LL B^*$ sends the maps %
	$\pi_X\colon X\times \B^1\ra X$  to isomorphisms, i.e. that  $|X\times\B^1|_{\Berk}\ra|X|_{\Berk}$ is a weak equivalence in $\Top$. This follows from Berkovich's results about the contractibility of the disc \cite[Theorem 6.1.4]{berkovich} and \cite[Corollary 8.7(ii)]{berk-contr}. The result then follows from Proposition \ref{martin2}. %
\end{proof}

\begin{rmk}We denote  by $|X|$ the topological space underlying a rigid analytic variety (following Huber). 
	If $U\ra X$ is an \'etale cover, then the map  $|U|\ra|X|$ is open and surjective, and hence a quotient map. This shows that the functor $|\cdot|$ induces a Quillen functor from $\sPsh(\RigSm/K)$ to $\Top$ factoring over the \'etale localization. 
	On the other hand, %
	the topological space $|\B^1|$ is not weakly contractible,  and hence this functor does not factor over motivic category.%
\end{rmk}

\begin{rmk}
	Any Nisnevich square induces a cover of Huber spaces which admits locally a section (see \cite[Remark 1.2.4]{ayoub-rig}). The proof of the Nisnevich descent is therefore simpler, and it substantially coincides with the archimedean version of Proposition \ref{maintop} proven in \cite{dug-isa}.
\end{rmk}

\begin{thm}\label{mainQ}
Let $\Lambda$ be a ring. There is a Quillen adjunction
	$$
	\adj{B^*}{(\Ch\Psh(\RigSm/K,\Lambda))/(\et,\B^1)}{\Ch(\Lambda)}{B_*}
	$$
	inducing an adjunction:
	$$
	\adj{\LL B^*}{\RigDA^{\eff}_{\et}(K,\Lambda)}{\catD(\Lambda)}{\RR B_{*}}.
	$$
		If $K$ is perfect, the adjunction above descends to $\RigDA^{\eff}_{\Frobet}(K,\Lambda)$. 
	Moreover, for any rigid analytic variety $X$ and any $n\in\Z$ we have 
	$$\LL B^*\Lambda(X)\cong C_{\Sing}(|X|_{\Berk},\Lambda)$$  
	where $C_{\Sing}$ denotes the singular complex. In particular
	$$H_n(\LL B^*\Lambda(X))\cong H_{n}^{\Sing}(|X|_{\Berk},\Lambda).$$  

\end{thm}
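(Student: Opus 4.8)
The statement is the $\Lambda$-linear enhancement of Proposition~\ref{maintop}, and the plan is to deduce it from the universal properties recalled in Propositions~\ref{martin} and~\ref{martin2}, feeding in the facts about Berkovich spaces already used in the proof of Proposition~\ref{maintop}. First I would let $\gamma\colon\RigSm/K\ra\Ch(\Lambda)$ be the functor sending $X$ to the singular chain complex $C_{\Sing}(|X|_{\Berk},\Lambda)=N(\Lambda[\Sing(|X|_{\Berk})])$ of the underlying Berkovich space; this is functorial since both $X\mapsto|X|_{\Berk}$ and the singular complex are. Applying the $\Lambda$-enriched Proposition~\ref{martin} with $\catD=\Ch(\Lambda)$ (projective model structure) yields a Quillen pair $\adj{B^*}{\Ch\Psh(\RigSm/K,\Lambda)}{\Ch(\Lambda)}{B_*}$ together with a natural weak equivalence $B^*\circ y\Rightarrow\gamma$; here $B^*$ may be taken to be left Kan extension of $\gamma$ along $y$ and $B_*$ the associated singular functor $N\mapsto\bigl(X\mapsto\uhom_{\Ch(\Lambda)}(\gamma(X),N)\bigr)$.

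The next step is to check that this Quillen pair descends to the Bousfield localization $(\Ch\Psh(\RigSm/K,\Lambda))/(\Frobet,\B^1)$ by verifying the two hypotheses of Proposition~\ref{martin2}: that $\gamma(X\times\B^1)\ra\gamma(X)$ is a quasi-isomorphism, and that $\hocolim\gamma(\mcU_\bullet)\simeq\gamma(X)$ for every $\Frobet$-hypercover $\mcU_\bullet\ra X$. The key observation is that $C_{\Sing}(-,\Lambda)=N(\Lambda[\Sing(-)])$ sends weak homotopy equivalences to quasi-isomorphisms and homotopy colimits to homotopy colimits, because $\Sing\colon\Top\ra\sSet$ induces an equivalence of homotopy categories and $N(\Lambda[-])\colon\sSet\ra\Ch(\Lambda)$ is the total left derived functor of a left Quillen functor. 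Given this, $\B^1$-invariance follows from the (weak) homotopy equivalence $|X\times\B^1|_{\Berk}\ra|X|_{\Berk}$ provided by Berkovich's contractibility of the disc, exactly as in the last paragraph of the proof of Proposition~\ref{maintop}. For descent, the relative Frobenius induces a homeomorphism on Berkovich spaces and is therefore automatically inverted; and for an \'etale hypercover one reduces, by \cite[Theorem~8.6]{dhi}, to a split basal hypercover $\mcU_\bullet\ra X$, for which the proof of Proposition~\ref{maintop} shows that $|\mcU_\bullet|_{\Berk}$ is a split (hence Reedy-cofibrant) simplicial topological space with $\hocolim|\mcU_\bullet|_{\Berk}\simeq|X|_{\Berk}$. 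Applying $C_{\Sing}(-,\Lambda)$ and using that it commutes with this homotopy colimit then gives $\hocolim\gamma(\mcU_\bullet)\simeq\gamma(X)$. By Proposition~\ref{martin2} this produces the Quillen adjunction $\adj{B^*}{(\Ch\Psh(\RigSm/K,\Lambda))/(\Frobet,\B^1)}{\Ch(\Lambda)}{B_*}$ and hence, on homotopy categories, $\adj{\LL B^*}{\RigDA^{\eff}_{\Frobet}(K,\Lambda)}{\catD(\Lambda)}{\RR B_*}$.

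For the explicit formula, I would note that the motive $\Lambda(X)$ is the class of the representable presheaf $y(X)$, which sits in degree $0$ and is projectively cofibrant in $\Ch\Psh(\RigSm/K,\Lambda)$, hence stays cofibrant in the localization; so no cofibrant replacement is needed and $\LL B^*\Lambda(X)=B^*(y(X))\simeq\gamma(X)=C_{\Sing}(|X|_{\Berk},\Lambda)$ by the natural weak equivalence of Proposition~\ref{martin}. Passing to homology gives $H_n(\LL B^*\Lambda(X))\cong H_n^{\Sing}(|X|_{\Berk},\Lambda)$.

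The only non-formal point is the descent step, specifically the compatibility of $C_{\Sing}(-,\Lambda)$ with the homotopy colimit of the split simplicial space $|\mcU_\bullet|_{\Berk}$; but the topological input $\hocolim|\mcU_\bullet|_{\Berk}\simeq|X|_{\Berk}$ is already in place from Proposition~\ref{maintop}, and the remaining compatibility amounts to the homology spectral sequence $E^1_{p,q}=H_q^{\Sing}(|\mcU_p|_{\Berk},\Lambda)\Rightarrow H_{p+q}^{\Sing}(|X|_{\Berk},\Lambda)$ of a split simplicial space, equivalently to the fact that the singular complex functor preserves homotopy colimits. Everything else is a routine invocation of Propositions~\ref{martin} and~\ref{martin2}.
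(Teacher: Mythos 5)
Your proposal is correct and follows essentially the same route as the paper: both rely on Proposition~\ref{maintop} for the topological input, on $\Sing\colon\Top\ra\sSet$ being part of a Quillen equivalence (hence preserving homotopy colimits in the derived sense), and on $N\Lambda\colon\sSet\ra\Ch(\Lambda)$ being left Quillen, combined with the universal property of Propositions~\ref{martin}--\ref{martin2}. The only difference is organizational: the paper packages these facts into a commutative square of left Quillen functors $\mcU\RigSm/K\to\sSet\to\Ch(\Lambda)$ and observes that factoring over the localization is inherited along the square, whereas you re-verify the two hypotheses of Proposition~\ref{martin2} directly for $\gamma=C_{\Sing}(|-|_{\Berk},\Lambda)$; both are valid, and your explicit remark that $y(X)$ is projectively cofibrant (hence $\LL B^*\Lambda(X)$ needs no cofibrant replacement) is a useful point left implicit in the paper.
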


\begin{proof} %
	For simplicity, we directly assume that $K$ is perfect. 
We consider the functor $B\colon \RigSm/K\ra\Ch(\Lambda)$ given by $X\mapsto C_{\Sing}(|X|_{\Berk},\Lambda)$. It induces a Quillen adjunction from $\mcU_{dg}\RigSm/K\ra\Ch(\Lambda)$ and we want to show that it factors over the $(\Frobet,\B^1)$-localization.

The functor ${\Sing}\colon\Top\ra\sSet$ mapping each topological space to its singular simplicial complex is an part of an exact Quillen equivalence of model categories. We then deduce from Proposition \ref{maintop} that the functor $\tilde{B}\colon X\mapsto {\Sing}(|X|_{\Berk})$ induces a Quillen adjunction $\mcU\RigSm/K\rightleftarrows\sSet$ factoring over the $(\Frobet,\B^1)$-localization. 

We now consider the left Quillen functor $N\Lambda\colon\sSet\ra\Ch(\Lambda)$ induced by the composition of the $\Lambda$-enrichment $\sSet\ra s\Lambda\Mod$ followed by the Dold-Kan functor $s\Lambda\Mod\ra\Ch(\Lambda)$. It gives rise to the following commutative diagram of left Quillen functors
	$$\xymatrix{
	\mcU\RigSm/K\ar[d]\ar[r]^-{\tilde{B}}&
\sSet\ar[d]^{N\Lambda}\\
	\mcU_{dg}\RigSm/K\ar[r] & \Ch(\Lambda)
	}$$
	Since the functor on top factors over the $(\Frobet,\B^1)$-localization, we deduce that the bottom functor does as well. On the other hand, we remark that this functor is the one  induced by mapping an object $X$  of $\RigSm/K$ to $N\Lambda(\Sing(|X|_{\Berk}))$ which is canonically isomorphic to $B(X)$ therefore proving our claim.
\end{proof}

\begin{rmk}
	By replacing simplicial sets with spectra in the construction of $\mcU\cat$ one can deduce from the previous result the existence of the Berkovich realization 
	$$\adj{\LL B^*}{\RigSH^{\eff}_{\et}(K)}{\SH}{\RR B_*}$$
	for the category of effective (anabelian) motives $\RigSH^{\eff}_{\et}(K)$ (see \cite[Definition 1.3.2]{ayoub-rig}) with values in the stable homotopy category $\SH$ (see \cite{EKMM}).
\end{rmk}

Having defined a realization from a category of motives, it is natural to see what sort of cohomology theory arises from it. As a matter of fact, this cohomology theory turns out to be quite pathological, as the two following remarks explain.

\begin{rmk}\label{tatetwist}
	The Quillen pair of Theorem \ref{mainQ} does not descend to the stable category of motives, the one constructed from $\RigDA_{\et}^{\eff}(K,\Lambda)$ by inverting the Tate twist in a universal way (see \cite[Chapter 2]{ayoub-rig}). Indeed, the object defining the Tate twist $[\Spa K\langle T^{\pm1}\rangle/*]$ is mapped to the zero complex since $*\ra|\Spa K\langle T^{\pm1}\rangle|_{\Berk}$ is a homotopy equivalence (see \cite{berk-contr}).
\end{rmk}

\begin{rmk}\label{resgen}Let $k$ be the residue field of $K$. 
	By means of \cite[Definition 1.4.12]{ayoub-rig} we can define the category $\FormSH^{\eff}_{\mfM}(K^\circ)$ of motives of formal schemes over $K^\circ$. The special fiber functor  and the generic fiber functor define triangulated functors (see \cite[Remark 1.4.25]{ayoub-rig})
$$
\DA^{\eff}_{\et}(k,\Lambda)\stackrel{\sim}{\leftarrow} \FormDA^{\eff}_{\et}(K^\circ,\Lambda)\ra\RigDA^{\eff}_{\et}(K,\Lambda)
$$
the first one being an equivalence by \cite{ayoub-rig}. By composition, we obtain in particular a cohomological realization 
$$
\DA^{\eff}(k,\Lambda)\stackrel{\sim}{\ra} \FormDA^{\eff}_{\et}(K^\circ,\Lambda)\ra\RigDA^{\eff}_{\et}(K,\Lambda)\stackrel{\LL B^*}{\longrightarrow}\catD(\Lambda)
$$
which is surprising at first sight: it looks as if it defined a cohomology with $\Z$-coefficients for varieties in positive characteristic (by taking $\Lambda=\Z$). This is not quite the case. Indeed, since rigid analytic varieties with good reduction are contractible (see \cite{berk-contr}), the composite realization above coincides with the one induced by the functor mapping a connected smooth variety $\bar{X}$ to the 
 trivial topological space. In particular, the homology theory on connected smooth algebraic varieties over $k$ obtained through the composite realization  is %
\[
H_*(\bar{X})=\begin{cases}
\Lambda&\text{if }{i=0}\\
0&\text{if }{i>0}
\end{cases}
\] 
\end{rmk}

\section{The Berkovich realization as the maximal Artin quotient}\label{artq}

From now on, we make the following assumption:

\begin{assu}
We suppose that $\Lambda$ is a $\Q$-algebra. Fix a separable closure $K^{\sep}$ of $K$ and let $C$ be its completion.
\end{assu} 

The first aim of this section is to enrich the realization constructed in Theorem \ref{mainQ} into a functor taking values in Galois representations. 

We recall some crucial results of Berkovich on the singular cohomology of Berkovich spaces that we list below.

\begin{prop}\label{berkH}
Let $X$ be a smooth quasi-compact rigid analytic variety over $K$.
\begin{enumerate}[(i)]
\item\label{b1} $H^i_{\Sing}(|X|_{\Berk},\Lambda)\cong H^i(|X|_{\Berk},\Lambda)$ and they have finite dimension.
\item There is a finite Galois extension $L$ such that $H^i(|X_L|_{\Berk},\Lambda)\cong H^i(|X_{L'}|_{\Berk},\Lambda)$
for each field extension $L'/L$.
\item If $L/K$ is a finite Galois extension then  $|X|_{\Berk}\cong \Gal(L/K)\backslash |X_L|_{\Berk}$ and $H^i(|X|_{\Berk},\Lambda)\cong H^i(|X_L|_{\Berk},\Lambda)^{\Gal(L/K)}$.
\item\label{4} $|X|_{\Berk}\cong \Gal(K^{\sep}/K)\backslash |X_C|_{\Berk}$ and  $\Gal(K^{\sep}/K)$ acts continuously on the $\Lambda$-module $ H^i(|X_C|_{\Berk},\Lambda)$.
\item If $X$ is connected and has good reduction, then $|X|_{\Berk}$ is contractible. In particular $H^i_{\Sing}(|X|_{\Berk},\Lambda)=0$ if $i>0$.
\end{enumerate}
\end{prop}

\begin{proof}
The first statement follows from {\cite[Corollary 9.6]{berk-contr}}. If $X$ is smooth, it is locally isomorphic to $\Spa K\langle\tau_1,\ldots,\tau_n\rangle/(p_1,\ldots,p_n)$ with $p_i$ polynomials in $K[\tau_i]$. In particular, it is an open subvariety of the analytification of $\Spec [\tau_1,\ldots,\tau_n]/(p_1,\ldots,p_n)$. We can then apply {\cite[Theorem 10.1]{berk-contr}} to get the second point. The third point follows from \cite[Proposition 1.3.5]{berkovich} and \cite[Paragraph 5.3]{tohoku} while the fourth statement follows from \cite[Corollary 1.3.6]{berkovich} and the previous points. The fifth point is proved in \cite[Section 5]{berk-contr} (the skeleton of a rigid analytic variety of good reduction is a point).
\end{proof}

\begin{dfn}\label{defBGal}We let $\Gal/K$ be the category of finite Galois extensions of $K$ inside $K^{\sep}$. 
For any smooth quasi-compact  variety $X$ and any $F$ in $\Gal/K$, we remark that the   complex of $\Lambda$-modules
$C_{\Sing}(|X_F|_{\Berk},\Lambda)
$
comes equipped with a canonical continuous action of $\Gal(K^{\sep}/K)$. Since $\Q\subset\Lambda$, it is a complex of acyclic Galois representations. We denote with $B_{F}(X)$  the induced object of $\Ch\Sh_{\et}(\Gal/K,\Lambda)$:
$$
B_{F}(X)\colon \Spec L\mapsto  C_{\Sing}(|X_F|_{\Berk},\Lambda)^{\Gal(K^{\sep}/L)}.
$$
We finally define $B_{\Gal(K)}(X)$ to be  $\holim_{F\in\Gal/K}B_F(X)$ in $\Ch\Psh(\Gal/K,\Lambda)$. We denote with the same symbol the corresponding object in $\Ch\Sh_{\et}(\Gal/K,\Lambda)\cong \Ch\Sh_{\et}(\Et/K,\Lambda)$ (see Example \ref{ex:Gal}).
\end{dfn}

 \begin{rmk}
	In the defintion of $B_{\Gal(K)}(X)$ we use a homotopy limit over the complexes obtained with finite Galois extensions rather than taking the singular complex of $|X_C|$. This is akin to the situation considered by Quick \cite[Section 3]{quick}.
\end{rmk}

\begin{rmk}\label{holimpsh}
	The object $B_{\Gal(K)}(X)$  is a homotopy limit of the \'etale-fibrant complexes $B_F(X)$ (they are levelwise Galois-acyclic) hence it is also \'etale fibrant. We deduce that it can also be computed directly in $\Ch\Sh_{\et}(\Gal/K,\Lambda)$ as a homotopy limit of the complexes of sheaves $B_F(X)$.
\end{rmk}

\begin{rmk}\label{evL}
	Let $L$ be in $\Gal/K$. The functor $ev_L\colon\mcF\mapsto\mcF(L)$ from $\Ch\Psh(\Gal/K,\Lambda)$ to $\Ch(\Lambda\Mod)$ is exact and preserves homotopy limits. Moreover, the collection of functors $\{ ev_L\}_{L\in\Gal/K}$ reflects the weak equivalences (that is, a map $\mcF\ra\mcG$ is a weak equivalence if and only if all maps $\mcF(L)\ra\mcG(L)$ are quasi-isomorphisms). This follows from the very definition of the projective model structure that we put on $\Ch\Psh(\Gal/K,\Lambda)$.
\end{rmk}

\begin{prop}\label{isXC}
The sheaf $B_{\Gal(K)}(X)$ corresponds to the continuous Galois representations $H_i^{\Sing}(|X_C|_{\Berk},\Lambda)$ by means of the equivalence given in Proposition \ref{sigma}.
\end{prop}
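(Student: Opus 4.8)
The plan is to compute, for every $i$, the continuous Galois representation attached by Proposition~\ref{sigma} to the $i$-th cohomology sheaf of the complex $B_C(X)$ (the object written $B_{\Gal(K)}(X)$ in the statement) and to identify it with $H_i^{\Sing}(|X_C|_{\Berk},\Lambda)$ equipped with its natural Galois action. Since $\sigma^*$ is an exact equivalence it commutes with the formation of cohomology objects, and $\sigma^*(\mathcal F)=\varinjlim_{L}\mathcal F(L)$ over finite Galois $L/K$; combined with the exactness of $\mathrm{ev}_L$ (Remark~\ref{evL}), this reduces us to constructing, $\Gal(L/K)$-equivariantly and compatibly with enlarging $L$, isomorphisms $H_i\big(B_C(X)(L)\big)\cong H_i^{\Sing}(|X_C|_{\Berk},\Lambda)^{\Gal(C/L)}$ and then passing to the colimit over $L$.

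First I would evaluate the homotopy limit. By Remark~\ref{evL} the functor $\mathrm{ev}_L$ preserves homotopy limits, so
$$
B_C(X)(L)\;=\;\holim_{F}\,B_F(X)(L)\;=\;\holim_{F}\,C_{\Sing}(|X_F|_{\Berk},\Lambda)^{\Gal(C/L)},
$$
where $F$ ranges over the subsystem of finite Galois subextensions of $K^{\sep}/K$, which is cofinal and hence does not affect the homotopy limit. Because $\Q\subseteq\Lambda$ and the $\Gal(C/K)$-action on each $C_{\Sing}(|X_F|_{\Berk},\Lambda)$ factors through the finite group $\Gal(F/K)$, the invariants functor $(-)^{\Gal(C/L)}$ is exact and commutes with products, hence with homotopy limits; therefore
$$
B_C(X)(L)\;\simeq\;\Big(\holim_{F}\,C_{\Sing}(|X_F|_{\Berk},\Lambda)\Big)^{\Gal(C/L)}.
$$

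The crucial step is then to run the homotopy limit over $F$ using Berkovich's finiteness theorems. By Proposition~\ref{berkH}(i)--(ii) there is a finite Galois extension $L_0/K$ such that for every field extension $L'/L_0$ the pullback $H^j(|X_{L_0}|_{\Berk},\Lambda)\to H^j(|X_{L'}|_{\Berk},\Lambda)$ is an isomorphism of finite-dimensional spaces; dualizing, the structure map $C_{\Sing}(|X_{L'}|_{\Berk},\Lambda)\to C_{\Sing}(|X_{L_0}|_{\Berk},\Lambda)$ is a quasi-isomorphism for every such $L'$, in particular for all finite Galois $L'\supseteq L_0$ and for $L'=C$. Hence the pro-system $\{C_{\Sing}(|X_F|_{\Berk},\Lambda)\}_F$ is essentially constant, its homotopy limit is $C_{\Sing}(|X_{L_0}|_{\Berk},\Lambda)$, and the latter is canonically quasi-isomorphic to $C_{\Sing}(|X_C|_{\Berk},\Lambda)$; all the maps involved are $\Gal(C/K)$-equivariant for the actions functorially induced by the Galois action on the spaces $|X_F|_{\Berk}$ and $|X_C|_{\Berk}$. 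Feeding this into the formula above and using once more the exactness of $(-)^{\Gal(C/L)}$ gives the desired $\Gal(L/K)$-equivariant isomorphisms $H_i\big(B_C(X)(L)\big)\cong H_i^{\Sing}(|X_C|_{\Berk},\Lambda)^{\Gal(C/L)}$. Passing to the colimit over finite Galois $L/K$ and invoking the continuity of the Galois action on $H_i^{\Sing}(|X_C|_{\Berk},\Lambda)$ (Proposition~\ref{berkH}(iv)), the left-hand side becomes $\sigma^*(\mathcal H_i(B_C(X)))$ and the right-hand side becomes $H_i^{\Sing}(|X_C|_{\Berk},\Lambda)$, which is the assertion.

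I expect the genuine difficulty to lie in the identification $\holim_{F}C_{\Sing}(|X_F|_{\Berk},\Lambda)\simeq C_{\Sing}(|X_C|_{\Berk},\Lambda)$: a priori one only has $|X_C|_{\Berk}=\varprojlim_F|X_F|_{\Berk}$ as topological spaces, and singular chains do not commute with such inverse limits, so the equivalence really rests on Berkovich's stabilization and finiteness results. A second, more bookkeeping point is to check that the stabilization isomorphism of Proposition~\ref{berkH}(ii) is the one induced by the structure maps of the tower (so that it is automatically compatible with the Galois actions), and that the residual $\Gal(C/K)$-action surviving the homotopy limit agrees, on homology, with the continuous action of Proposition~\ref{berkH}(iv); here one uses that $L_0/K$ may be chosen Galois, so that $\Gal(C/K)$ acts on $|X_{L_0}|_{\Berk}$ through the finite quotient $\Gal(L_0/K)$.
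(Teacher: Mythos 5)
Your proof is correct, but it organizes the argument somewhat differently from the paper. The paper's proof keeps the invariants $(-)^{\Gal(C/L)}$ inside the homotopy limit and invokes Prasolov's universal-coefficients theorem for shape homology (reference [prasolov, Thm.~3.15]) — in effect a Milnor $\lim$--$\lim^1$ exact sequence — to identify $H_i\bigl(\holim_F C_{\Sing}(|X_F|)^{\Gal(C/L)}\bigr)$ with $\varprojlim_F H_i(|X_F|)^{\Gal(C/L)}$, the $\lim^1$ term vanishing because the tower of finite-dimensional $\Lambda$-modules stabilizes (Proposition~\ref{berkH}(i)--(ii)); it then identifies the inverse limit with $H_i(|X_L|)\cong H_i(|X_C|)^{\Gal(C/L)}$ using Proposition~\ref{berkH}(iii)--(iv). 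You instead pull the invariants out of the homotopy limit first (legitimate because $\Q\subset\Lambda$ makes $(-)^{\Gal(C/L)}$ exact and product-preserving) and then show that the pro-system $\{C_{\Sing}(|X_F|_{\Berk},\Lambda)\}_F$ is essentially constant after a finite stage $L_0$, so that $\holim_F C_{\Sing}(|X_F|)\simeq C_{\Sing}(|X_{L_0}|)\simeq C_{\Sing}(|X_C|)$, from which the result follows by taking invariants and homology. The two arguments rest on exactly the same Berkovich inputs (finiteness, stabilization, Galois quotients, continuity); the paper's route pushes the difficulty into a $\lim^1$-vanishing step borrowed from shape theory, while yours replaces that by the observation that an essentially constant filtered pro-object has homotopy limit its stable value, at the cost of needing the exactness of invariants (hence $\Q\subset\Lambda$) \emph{before} the limit rather than only at the end. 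Your closing caveats — that Proposition~\ref{berkH}(ii) must stabilize along the structure maps of the tower, equivariantly, and that singular chains do not naively commute with the inverse limit $|X_C|_{\Berk}=\varprojlim_F|X_F|_{\Berk}$ — are exactly the right points to flag, and are what Prasolov/Berkovich supply in the paper's version as well.
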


\begin{proof}
 By construction we have $$B_{\Gal(K)}(X)( L)\!=\!(\holim_FB_F(X))(L)\cong \holim_F(B_F(X)( L))\!\cong\!\holim_F(C_{\Sing}(|X_F|)^{\Gal(F/L)})$$
whose homology is by \cite[Theorem 3.15(1) and 3.15(6)]{prasolov} and by Proposition \ref{berkH} isomorphic to  $\varprojlim_F H_i(|X_F|)^{\Gal(K^{\sep}/L)}\cong H_i(|X_L|)\cong H_i(|X_C|)^{\Gal(K^{\sep}/L)}$ (we again used the fact that $\Q\subset\Lambda$).
\end{proof}

We are now ready to enrich the Berkovich realization with a Galois action. We recall that by Example \ref{ex:Rig} the motivic category $\RigDA_{\et}^{\eff}(K,\Lambda)$  can be equivalently defined out of the full subcategory $\RigSm^{\qc}/K$ of $\RigSm/K$ whose objects are \emph{quasi-compact} smooth varieties.

\begin{prop}
The functor $B_{\Gal(K)}\colon \RigSm^{\qc}/K\ra\Ch\Sh_{\et}(K,\Lambda)$ induces a Quillen adjunction
	$$
	\adj{B_{\Gal(K)}^*}{(\Ch\Psh(\RigSm/K,\Lambda))/(\et,\B^1)}{\Ch\Sh_{\et}(K,\Lambda)}{B_{\Gal(K)*}} %
	$$
	and hence an adjunction:
	$$
	\adj{\LL B^*_{\Gal(K)}}{\RigDA^{\eff}_{\et}(K,\Lambda)}{\catD_{\et}(K,\Lambda)}{\RR B_{\Gal(K)*}}.
	$$
	If $K$ is perfect, the adjunction above descends to $\RigDA^{\eff}_{\Frobet}(K,\Lambda)$.
\end{prop}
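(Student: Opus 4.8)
The strategy is to invoke the universal property of the localized presheaf category (Proposition \ref{martin2}) applied to the functor $B_{\Gal(K)}\colon\Aff\Sm/K\ra\Ch\Sh_{\et}(K,\Lambda)$, together with the observation that $\Aff\Sm/K$ is a $\Frobet$-dense full subcategory of $\RigSm/K$, so that the localized presheaf categories over the two sites are Quillen equivalent. Thus the only thing to check is that $B_{\Gal(K)}$ satisfies the two conditions in Proposition \ref{martin2}: that it sends $X\times\B^1\ra X$ to a weak equivalence, and that it sends $\Frobet$-hypercovers to homotopy colimits (equivalently, using Remark \ref{evL}, that each $\Frobet$-hypercovered descent statement holds after evaluating at every $L\in\Gal/K$). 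Once this is in place, the left Quillen functor $L$ of Proposition \ref{martin} factors through $(\Ch\Psh(\RigSm/K,\Lambda))/(\Frobet,\B^1)$, giving the claimed Quillen pair, and passing to homotopy categories — using that $\Ho$ of the source is $\RigDA^{\eff}(K,\Lambda)$ (note $\Lambda$ is a $\Q$-algebra, so $\Frobet$- and $\et$-localizations agree, cf.\ the earlier discussion, but in any case one can also just work with the $\Frobet$-version as in Theorem \ref{mainQ}) and $\Ho$ of the target is $\catD_{\et}(K,\Lambda)$ by the remark after Definition \ref{loc} — yields the derived adjunction.

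\textbf{Reduction of the descent and homotopy-invariance conditions.} By Remark \ref{evL}, a map in $\Ch\Psh(\Gal/K,\Lambda)$ is a weak equivalence iff it is so after applying every $ev_L$, and $ev_L$ preserves homotopy limits; combined with Remark \ref{holimpsh} this lets me reduce both conditions to statements about the complexes $C_{\Sing}(|(-)_F|_{\Berk},\Lambda)$. For $\B^1$-invariance: $ev_L(B_{\Gal(K)}(X))=\holim_F C_{\Sing}(|X_F|_{\Berk},\Lambda)^{\Gal(F/L)}$, and since each $|(X\times\B^1)_F|_{\Berk}=|X_F\times\B^1|_{\Berk}\ra|X_F|_{\Berk}$ is a weak equivalence of topological spaces — exactly the contractibility input of Berkovich used already in Proposition \ref{maintop} (\cite[Theorem 6.1.4]{berkovich}, \cite[Corollary 8.7(ii)]{berk-contr}) — taking $\Gal(F/L)$-invariants (an exact operation since $\Q\subset\Lambda$) and then the homotopy limit over $F$ preserves this equivalence. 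For $\Frobet$-descent: the relative Frobenius induces a homeomorphism $|X^{(1)}_F|\cong|X_F|$ (as in Proposition \ref{maintop}), so $B_{\Gal(K)}$ inverts relative Frobenius maps levelwise; and for an \'etale hypercover $\mcU_\bullet\ra X$ the map $\hocolim_\bullet B_{\Gal(K)}(\mcU_\bullet)\ra B_{\Gal(K)}(X)$ is checked after $ev_L$, where — exchanging the $\hocolim_\bullet$ with the $\holim_F$ and with the exact functor $(-)^{\Gal(F/L)}$ — it follows from the analogous \'etale descent for $C_{\Sing}(|(-)_F|_{\Berk},\Lambda)$, which is precisely what the proof of Proposition \ref{maintop} establishes (via \cite[Theorem 8.6]{dhi} and the quotient-map argument of \cite[Lemma 5.11]{berk-contr}), now with $\Lambda$-coefficients as in Theorem \ref{mainQ}.

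\textbf{Main obstacle.} The delicate point is the interchange of the (homotopy) colimit over the hypercover simplicial direction with the homotopy limit $\holim_F$ over finite \'etale $F/K$ defining $B_C$. A priori $\holim$ and $\hocolim$ do not commute, so one must argue that the relevant colimits are \emph{finite} (each level $\mcU_i$ of a split hypercover is a finite coproduct of representables, and the coequalizer presentation reduces the simplicial $\hocolim$ to a finite diagram after the splitting analysis of Proposition \ref{maintop}), whence the exchange is harmless; alternatively one uses that, by Proposition \ref{berkH}(ii), for a fixed quasi-compact $X$ the system $\{C_{\Sing}(|X_F|_{\Berk},\Lambda)\}_F$ is essentially constant beyond a finite $L$, so $\holim_F$ is computed by a finite sub-system. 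Making this precise — i.e.\ controlling the homotopy limit well enough that it commutes with the descent colimits and with taking invariants — is the one place requiring care; everything else is a formal consequence of the cited universal properties and of the topological facts already recorded in Section \ref{sbr}.
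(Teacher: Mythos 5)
Your high-level plan is sound — invoke Proposition \ref{martin2} and Remark \ref{evL} to reduce everything to checking weak equivalences after $ev_L$ — and this matches the paper. But you correctly identify the real issue (commuting $\holim_F$ with the descent $\hocolim$) and then do not resolve it; the paper's proof is structured precisely so that this interchange never arises. Instead of attempting to push the hypercover $\hocolim$ through $\holim_F$, the paper first shows that the composite functor $ev_L\circ B^*_{\Gal(K)}$, i.e.\ $X\mapsto\holim_F C_{\Sing}(|X_F|,\Lambda)^{\Gal(C/L)}$, is \emph{naturally quasi-isomorphic} to the plain functor $X\mapsto C_{\Sing}(|X_L|,\Lambda)$. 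This is established by the chain of homology isomorphisms
\[
H_i\bigl(C_{\Sing}(|X_F|,\Lambda)^{\Gal(C/L)}\bigr)\cong H_{\Sing}^i(|X_F|,\Lambda)^{\vee\,\Gal(C/L)}\cong H_{\Sing}^i(|X_F|,\Lambda)^{\Gal(C/L)\,\vee}\cong H_{\Sing}^i(|X_L|,\Lambda)^\vee\cong H_i\bigl(C_{\Sing}(|X_L|,\Lambda)\bigr)
\]
for every $F\supseteq L$, using finite-dimensionality, $\Q\subset\Lambda$, and Proposition \ref{berkH}(i),(iii). Since every term of the pro-system has the same homology, compatibly, the $\holim_F$ is quasi-isomorphic to the $L$-term and the $\holim$ is effectively eliminated. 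At that point descent and $\B^1$-invariance are inherited verbatim from Theorem \ref{mainQ} applied over $L$, and there is nothing to interchange.

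Two specific issues in your writeup: first, your option (a) — that the simplicial $\hocolim$ of a hypercover is a finite colimit because each level is a finite coproduct — is not correct as stated; a hypercover has infinitely many simplicial levels, and the reduction to a coequalizer in the proof of Proposition \ref{maintop} is a consequence of the Str\o m-cofibrancy/split-hypercover argument in $\Top$, not a general finiteness statement that you could carry over to $\Ch(\Lambda)$. Second, your option (b) (essential constancy from Proposition \ref{berkH}(ii)) is in the right spirit, but it is sharper — and what the paper actually proves — to note that for \emph{every} $F\supseteq L$ (not just $F$ large) the Galois-invariants recover $C_{\Sing}(|X_L|,\Lambda)$ up to quasi-isomorphism, so the argument does not even need the stabilization statement. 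The gap in your proposal is that you leave the crucial reduction as an acknowledged ``obstacle'' rather than carrying it out, whereas the clean route is to prove the identification of $ev_L\circ B^*_{\Gal(K)}$ with the Berkovich realization over $L$ \emph{before} checking any descent condition.
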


\begin{proof}
By Proposition \ref{martin2}, it suffices to prove that the functor $$B^*_{\Gal(K)}\colon \mcU_{dg}\RigSm^{\qc}/K\ra\Ch\Psh(\Gal/K,\Lambda)\cong\mcU_{dg}(\Gal/K)$$ sends the maps in the set $S$ of Definition \ref{loc} to weak equivalences. By Remark \ref{evL} we can fix a Galois extension $L/K$ and check that the composite functor
$$\mcU_{dg}\RigSm^{\qc}/K\ra\Ch\Psh(\Gal/K,\Lambda) %
\stackrel{ev_L}{\ra} \Ch(\Lambda)$$
sends the maps in the set $S$ to weak equivalences. On the other hand, we remark that by definition the functor above is the one induced by $X\mapsto \holim_F C_{\Sing}(|X|_F,\Lambda)^{\Gal(K^{\sep}/L)}$. This last complex is canonically quasi-isomorphic to $C_{\Sing}(|X|_L,\Lambda)$ as the following sequence of isomorphisms shows, where we let for simplicity $F\supset L$ (we repeatedly use the hypothesis $\Q\subset\Lambda$ and Proposition \ref{berkH}):
$$
\begin{aligned}
H_i(C_{\Sing}(|X_F|,\Lambda)^{\Gal(K^{\sep}/L)})&\cong H_i^{\Sing}(|X_F|,\Lambda)^{\Gal(K^{\sep}/L)}\cong H_{\Sing}^i(|X_F|,\Lambda)^{\vee\Gal(K^{\sep}/L)}\\
&\cong 
H_{\Sing}^i(|X_F|,\Lambda)^{\Gal(K^{\sep}/L)\vee}\cong
H_{\Sing}^i(|X_L|,\Lambda)^{\vee}\\&\cong H^{\Sing}_i(|X_L|,\Lambda)\cong H_i(C_{\Sing}(|X_L|,\Lambda)).
\end{aligned}
$$
 We already proved that the functor $X\mapsto C_{\Sing}(|X_L|,\Lambda)$ factors over the $(\Frobet,\B^1)$-localization in Theorem \ref{mainQ}  hence the set $S$ is sent to weak equivalences, as claimed.
\end{proof}

We recall that there is another adjunction between the categories above but defined in the opposite direction:  it is the pair induced by the inclusion of the small site into the big site $\iota\colon  \Et/K\ra\RigSm/K$ giving rise to:
$$\adj{\LL \iota^*}{\catD_{\et}(K,\Lambda)}{\RigDA^{\eff}_{\et}(K,\Lambda)}{\RR \iota_*}
$$

\begin{dfn}
The objects in the essential image of $\LL\iota^*$ are called \emph{Artin motives}, and the full subcategory they form is denoted  by $\RigDA^{\eff}_{\et}(K,\Lambda)_0$ (or $\RigDA^{\eff}_{\Frobet}(K,\Lambda)_0$ ).
\end{dfn}

\begin{thm}\label{main0}
Let 
 $\Lambda$ be a $\Q$-algebra. The inclusion of Artin motives in effective rigid analytic motives over $K$ $$ \RigDA^{\eff}_{\et}(K,\Lambda)_0\subset \RigDA^{\eff}_{\et}(K,\Lambda)$$
admits a left adjoint $\omega_0\colonequals\LL\iota^*\circ\LL B^*_{\Gal(K)}$. 
In particular, for any motive $M$ the map 
$$
M\ra \omega_0 M
$$
 is universal  among maps from $M$ to an Artin motive. If $K$ is perfect, the same is true with respect to the category  $\RigDA^{\eff}_{\Frobet}(K,\Lambda)$
\end{thm}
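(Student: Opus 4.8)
The plan is to show that $\LL B^*_{\Gal(K)}$ is, after applying $\LL\iota^*$, left adjoint to the inclusion $\RigDA^{\eff}(K,\Lambda)_0\subset\RigDA^{\eff}(K,\Lambda)$. Since $\LL\iota^*$ and $\RR\iota_*$ already form an adjunction between $\catD_{\et}(K,\Lambda)$ and $\RigDA^{\eff}(K,\Lambda)$, and since $\LL\iota^*$ is fully faithful onto $\RigDA^{\eff}(K,\Lambda)_0$ by the very definition of Artin motives, it suffices to produce a natural transformation $\mathrm{id}\Rightarrow\RR\iota_*\circ\LL B^*_{\Gal(K)}$ (the unit) and to check the triangle identities, or equivalently to exhibit for every $M\in\RigDA^{\eff}(K,\Lambda)$ and every $N\in\catD_{\et}(K,\Lambda)$ a natural isomorphism
$$
\Hom_{\RigDA^{\eff}(K,\Lambda)}(M,\LL\iota^* N)\cong\Hom_{\catD_{\et}(K,\Lambda)}(\LL B^*_{\Gal(K)}M, N).
$$
So the first step is to establish that $\LL B^*_{\Gal(K)}$ is itself left adjoint to $\RR\iota_*$, i.e.\ that $\RR\iota_*\cong\RR B_{\Gal(K)*}$; equivalently, that $\LL\iota^*\cong\LL B^*_{\Gal(K)}{}^{\mathrm{adj}}$ is right... more precisely that the composite $\catD_{\et}(K,\Lambda)\xrightarrow{\LL\iota^*}\RigDA^{\eff}(K,\Lambda)\xrightarrow{\LL B^*_{\Gal(K)}}\catD_{\et}(K,\Lambda)$ is isomorphic to the identity.

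Concretely, I would proceed as follows. By Proposition \ref{sigma} an object of $\catD_{\et}(K,\Lambda)$ is a complex of continuous $\Gal(K^{\sep}/K)$-representations, and since $\Q\subset\Lambda$ this category is (the derived category of) a semisimple abelian category, so it is generated under (co)limits and shifts by the objects $\LL\iota^*\Lambda(\Spa L)$ for $L/K$ finite separable. Hence to show $\LL B^*_{\Gal(K)}\circ\LL\iota^*\cong\mathrm{id}$ it is enough to compute $\LL B^*_{\Gal(K)}\LL\iota^*\Lambda(\Spa L)$ and to see that it agrees with $\Lambda(\Spa L)$ in $\catD_{\et}(K,\Lambda)$, naturally in $L$. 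But $\LL\iota^*\Lambda(\Spa L)$ is represented by $\Spa L$ viewed as a (zero-dimensional, good reduction) rigid variety over $K$, and by Proposition \ref{berkH}(v) together with Proposition \ref{isXC} its image under $\LL B^*_{\Gal(K)}$ is the complex whose sheaf of sections over $\Spa L'$ is $C_{\Sing}(|(\Spa L)_C|_{\Berk},\Lambda)^{\Gal(C/L')}$, which is concentrated in degree $0$ and equals $\Lambda[\Hom_K(L,C)]^{\Gal(C/L')}=\Lambda[\Hom_K(L,L')]$ — exactly the sheaf $\Lambda(\Spa L)$ on $\Et/K$. This identification is natural in $L$, so $\LL B^*_{\Gal(K)}\circ\LL\iota^*\cong\mathrm{id}_{\catD_{\et}(K,\Lambda)}$.

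Granting this, the adjunction is formal: for $M\in\RigDA^{\eff}(K,\Lambda)$ and $N\in\catD_{\et}(K,\Lambda)$,
$$
\Hom(M,\LL\iota^* N)\cong\Hom(M,\RR\iota_*\, {?})
$$
is not directly available, so instead I use the two adjunctions we have in hand: $(\LL B^*_{\Gal(K)},\RR B_{\Gal(K)*})$ gives $\Hom(\LL B^*_{\Gal(K)}M,N)\cong\Hom(M,\RR B_{\Gal(K)*}N)$, and I must identify $\RR B_{\Gal(K)*}\cong\LL\iota^*$ on the subcategory $\catD_{\et}(K,\Lambda)$ — but that is exactly the statement dual to what was proved in the previous paragraph, obtained by passing to right adjoints: $\LL B^*_{\Gal(K)}\circ\LL\iota^*\cong\mathrm{id}$ implies $\RR\iota_*\circ\RR B_{\Gal(K)*}\cong\mathrm{id}$, hence $\RR B_{\Gal(K)*}$ is a section of $\RR\iota_*$; combined with full faithfulness of $\LL\iota^*$ (and hence of its right adjoint restricted appropriately) this yields $\RR B_{\Gal(K)*}N\cong\LL\iota^* N$ for all $N$. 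Then $\omega_0=\LL\iota^*\circ\LL B^*_{\Gal(K)}$ satisfies $\Hom_{\RigDA^{\eff}}(M,\LL\iota^*N)\cong\Hom_{\catD_{\et}}(\LL B^*_{\Gal(K)}M,N)\cong\Hom_{\RigDA^{\eff}_0}(\omega_0 M,\LL\iota^*N)$, which is the universal property; the unit $M\to\omega_0 M$ is the composite of the unit of $(\LL B^*_{\Gal(K)},\RR B_{\Gal(K)*})$ with the isomorphism $\RR B_{\Gal(K)*}\cong\LL\iota^*\circ\LL B^*_{\Gal(K)}$ on Artin motives.

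The main obstacle I expect is the careful bookkeeping in the key computation $\LL B^*_{\Gal(K)}\LL\iota^*\Lambda(\Spa L)\cong\Lambda(\Spa L)$: one has to make sure the homotopy limit over finite Galois extensions defining $B_C$, the $\Gal(C/L')$-invariants, and the good-reduction contractibility of the points $|(\Spa L)_C|_{\Berk}$ all interact correctly and that the resulting isomorphism is natural in $L$ as a map of sheaves on $\Et/K$ — in particular that no higher $\varprojlim$-obstruction appears, which is where the standing hypothesis $\Q\subset\Lambda$ (making all the representations semisimple, hence the towers Mittag-Leffler and the homotopy limits computed by ordinary limits) is essential, exactly as used in the proofs of Proposition \ref{isXC} and the preceding proposition.
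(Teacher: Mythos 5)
Your first step — computing $\LL B^*_{\Gal(K)}\circ\LL\iota^*\cong\id$ on the generators $\Lambda(\Spa L)$ of $\catD_{\et}(K,\Lambda)$ — is correct and is essentially the paper's Proposition \ref{unit}, computed the same way. The gap is in the sentence ``Granting this, the adjunction is formal.'' It is not. You deduce from $\LL B^*_{\Gal(K)}\circ\LL\iota^*\cong\id$ that $\RR\iota_*\circ\RR B_{\Gal(K)*}\cong\id$, and you combine this with full faithfulness of $\LL\iota^*$ (i.e.\ $\RR\iota_*\circ\LL\iota^*\cong\id$) to conclude $\RR B_{\Gal(K)*}\cong\LL\iota^*$. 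But all you know at that point is that $\LL\iota^*$ and $\RR B_{\Gal(K)*}$ are two right-inverses of $\RR\iota_*$, which does not force them to be isomorphic. Concretely: full faithfulness of $\LL\iota^*$ plus the computed isomorphism give you $\Hom(\LL\iota^*N',\RR B_{\Gal(K)*}N)\cong\Hom(N',N)\cong\Hom(\LL\iota^*N',\LL\iota^*N)$, so the two functors are indistinguishable when tested against objects in the image of $\LL\iota^*$; but Artin motives are very far from generating $\RigDA^{\eff}(K,\Lambda)$, so this does not identify $\RR B_{\Gal(K)*}N$ with $\LL\iota^*N$ as objects. The whole content of the theorem is that they nonetheless agree when tested against arbitrary motives, and that must be proved by a computation, not by formal nonsense.

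The paper's proof supplies exactly the missing input. It first establishes a reduction lemma (Proposition \ref{redtocp}): given $F\circ G\cong\id$, one can verify the would-be adjunction $\Hom(X[n],GY)\cong\Hom(FX[n],Y)$ on a set of compact generators $X$ of the source and $Y$ of the target, and then bootstrap to all objects by closure under cones and sums. It then invokes \cite[Theorem 2.5.34]{ayoub-rig} to take as compact generators the motives of rigid varieties $X$ of potentially good reduction; a Galois base-change argument reduces to $X$ of good reduction and $Y=\Lambda$; and the decisive non-formal ingredient is Proposition \ref{1isloc}, namely that $\Lambda[n]$ is $\B^1$-local in $\Ch\Sh_{\et}(\RigSm/K,\Lambda)$ and $\Hom_{\RigDA^{\eff}}(\Lambda(X),\Lambda[n])\cong H^n_{\Sing}(|X|_{\Berk},\Lambda)$. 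This last isomorphism rests on the comparison between \'etale cohomology and sheaf cohomology of the Huber space (de Jong--van der Put) and on overconvergence to pass to the Berkovich space (Huber), and it is where the hypothesis $\Q\subset\Lambda$ enters materially — not merely to handle Mittag-Leffler issues in the homotopy limit as you suggest, but to make \'etale cohomology compute the topological one. None of this appears in your write-up, and it is precisely the content that cannot be replaced by a purely categorical argument. You should add the reduction to compact generators and the identification of $\Hom(\Lambda(X),\Lambda[n])$ with singular cohomology; without them the proof does not go through.
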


In other words, we want to prove the following result.

\begin{thm}\label{mainadj}
Let 
$\Lambda$ be a $\Q$-algebra. The functor $\LL B^*_{\Gal(K)}$ on \'etale motives (or $\Frobet$-motives if $K$ is perfect) is a left adjoint to the functor $\LL\iota^*$ and the unit map $\LL B^*_{\Gal(K)}\LL\iota^*\Rightarrow\id$ is invertible.
\end{thm}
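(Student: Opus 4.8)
The plan is to establish the adjunction $\LL B^*_{\Gal(K)}\dashv\LL\iota^*$ directly at the level of the unstable homotopy categories, and then to identify the unit as an isomorphism by a generator-and-descent argument. First I would compare $\LL B^*_{\Gal(K)}$ with the already-constructed composite $\LL\iota_!\circ\LL B^*$, or rather observe that both $\LL B^*_{\Gal(K)}$ and $\LL\iota^*$ are \emph{continuous} functors (they commute with arbitrary homotopy colimits, being left Quillen) so that it suffices to check the adjunction on a set of compact generators of $\RigDA^{\eff}(K,\Lambda)$, namely the motives $\Lambda(X)$ for $X$ smooth affinoid, against the compact generators $\Lambda(\Spa L)$, $L/K$ finite, of $\catD_{\et}(K,\Lambda)$. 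For those, Proposition \ref{isXC} (together with Proposition \ref{sigma}) identifies $\LL B^*_{\Gal(K)}\Lambda(X)$ with the complex of Galois representations $C_{\Sing}(|X_C|_{\Berk},\Lambda)$, and the mapping space computations reduce to the finite-dimensionality and Galois-descent statements of Proposition \ref{berkH}.

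The cleaner route, which I would actually carry out, is to avoid reproving an adjunction from scratch and instead verify the two triangle identities via the counterpart statement: show that the \emph{unit} $u\colon \LL B^*_{\Gal(K)}\circ\LL\iota^*\Rightarrow\id_{\catD_{\et}(K,\Lambda)}$ is invertible and that the \emph{counit}-to-be factors correctly. Concretely: $\LL\iota^*$ sends $\Lambda(\Spa L)$, for $L/K$ finite separable, to the motive of the rigid variety $\Spa L$ (a finite étale cover of $\Spa K$); applying $\LL B^*_{\Gal(K)}$ and using that $|{\Spa L_F}|_{\Berk}$ is a finite set of points permuted by the Galois group, one gets back exactly the permutation representation $\Lambda(\Spa L)$ in $\catD_{\et}(K,\Lambda)$. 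Since such objects generate $\catD_{\et}(K,\Lambda)$ and all functors in sight are continuous and triangulated, $u$ is invertible on all of $\catD_{\et}(K,\Lambda)$. Granting the adjunction, this is precisely the content of Theorem \ref{mainadj}, and Theorem \ref{main0} follows formally: the composite $\omega_0=\LL\iota^*\circ\LL B^*_{\Gal(K)}$ is then a colocalization onto $\RigDA^{\eff}(K,\Lambda)_0$, i.e.\ left adjoint to the inclusion, because $\LL\iota^*$ is fully faithful on the essential image of $\LL B^*_{\Gal(K)}$ precisely when the unit is an isomorphism.

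To prove the adjunction $\LL B^*_{\Gal(K)}\dashv\LL\iota^*$ itself, I would exhibit it on the model level: $\iota\colon\Et/K\to\Aff\Sm/K$ is a morphism of sites, so $\iota^*$ on sheaves has a left adjoint $\iota_!$, but the functor we want on the left is $B^*_{\Gal(K)}$, not $\iota_!$. The key point is that $B_{\Gal(K)}$ restricted along $\iota$ — i.e.\ $B_{\Gal(K)}(\Spa L)$ for $L/K$ finite — is canonically the representable sheaf $\Lambda(\Spa L)$, because $|{\Spa L_F}|_{\Berk}$ is discrete and its singular complex is the free $\Lambda$-module on $\Hom_K(L,F)$. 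Hence the composite $B^*_{\Gal(K)}\circ\iota_*$ (on presheaves) is naturally equivalent to the identity, and by the universal property of $\mcU_{dg}(\Et/K)/(\et)$ from Proposition \ref{martin2} this upgrades to the statement that $\LL B^*_{\Gal(K)}$ is left adjoint to $\RR\iota_*=\LL\iota^*$ — here one uses that $\LL\iota^*$ agrees with $\RR\iota_*$ on the étale-local model structure for a morphism of sites of this type, or simply that $\iota^*$ is itself exact. I expect the main obstacle to be the bookkeeping around \emph{which} of the several candidate left adjoints ($\iota_!$, $B^*_{\Gal(K)}$, a homotopy-limit-twisted variant) actually computes the left adjoint to $\LL\iota^*$, and verifying the compatibility of the homotopy limit $\holim_{F}$ appearing in the definition of $B_C(X)$ with the triangle identities; the homological input (finiteness, descent, contractibility for good reduction) is entirely supplied by Proposition \ref{berkH} and Proposition \ref{isXC}, so the remaining difficulty is purely formal-categorical rather than geometric.
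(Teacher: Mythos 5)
Your proposal correctly isolates the invertibility of the unit (this is exactly Proposition~\ref{unit} in the paper, and your argument for it — $|\Spa L_F|_{\Berk}$ is a discrete Galois set, so its singular complex is the permutation module — is the right one). However, the passage from ``unit is invertible'' to ``$\LL B^*_{\Gal(K)}\dashv\LL\iota^*$'' is where your proposal breaks down, and this is where the actual mathematical content of the theorem lies.

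Two concrete problems. First, the third paragraph contains a type error: you write ``$\LL B^*_{\Gal(K)}$ is left adjoint to $\RR\iota_*=\LL\iota^*$'', but $\RR\iota_*$ and $\LL\iota^*$ go in opposite directions and cannot be identified. What is actually being claimed is that $\LL\iota^*$ (which already has the right adjoint $\RR\iota_*$) is \emph{also} a right adjoint, namely of $\LL B^*_{\Gal(K)}$; equivalently, $\RR B_{\Gal(K)*}\cong\LL\iota^*$. Your appeal to Proposition~\ref{martin2} does not establish this: that universal property produces a left Quillen functor out of the localized presheaf category matching a given functor on representables, it does not manufacture an adjunction between two already-constructed derived functors. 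Knowing $FG\cong\id$ gives you a candidate unit, but you still need the counit and the triangle identities, or else a replacement argument. The paper supplies exactly that replacement via Proposition~\ref{redtocp}: given $FG\cong\id$, compactness, and commutation with sums, it suffices to verify the adjunction isomorphism on a set of compact generators on each side. You gesture at ``checking on generators'' in your first paragraph, but you never state or prove the abstract lemma that licenses this reduction.

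Second, and more seriously, the geometric heart of the argument is absent. Once reduced to compact generators, one must show
$\Hom(\Lambda(X)[n],\LL\iota^*\Lambda)\cong\Hom(\LL B^*_{\Gal(K)}\Lambda(X)[n],\Lambda)$
for $X$ smooth of (potentially) good reduction. The left side is computed by Proposition~\ref{1isloc}: $\Hom(\Lambda(X),\Lambda[n])\cong H^n_{\Sing}(|X|_{\Berk},\Lambda)$, which requires the comparison between \'etale cohomology of the rigid space and sheaf cohomology of $|X|$ (de Jong--van der Put) and then overconvergence to pass to $|X|_{\Berk}$ (Huber). You cite only Proposition~\ref{berkH}, which does not contain this comparison — it concerns finiteness, Galois descent and good-reduction contractibility of $|X|_{\Berk}$, not the identification of motivic Hom with singular cohomology. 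You also never invoke the key fact from \cite{ayoub-rig} that $\RigDA^{\eff}_{\Frobet}(K,\Lambda)$ is compactly generated by potentially-good-reduction varieties, which is what lets good-reduction contractibility close the argument. Without Proposition~\ref{1isloc} and this generation result your reduction dead-ends; the ``purely formal-categorical'' difficulty you anticipate is real, but there is also an essential non-formal input that your proposal does not identify.
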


\begin{proof}[Proof of Theorem \ref{main0} from Theorem \ref{mainadj}]
By definition, the category $ \RigDA^{\eff}_{\et}(K,\Lambda)_0$ is the essential image of $\LL\iota^*$ which is fully faithful given that $\LL B^*_{\Gal(K)}\LL\iota^*\cong\id$. The two categories are then equivalent, and the adjunction pair of Theorem \ref{main0} can be deduced from the one of Theorem \ref{mainadj}.
\end{proof}

\begin{rmk}
The content of the previous results does not lie in the existence of a left adjoint functor $\omega_0$ which could be proved with purely categorical methods (see  \cite[Chapter 5]{BVK}) but rather in its explicit description through  Berkovich spaces. This  produces interesting applications, see Section \ref{etale}.
\end{rmk}

We prove Theorem \ref{mainadj} in several steps. We start by checking the last claim.

\begin{prop}\label{unit}
There is an invertible natural transformation  $\LL B^*_{\Gal(K)}\circ\LL\iota^*\cong\id$. %
\end{prop}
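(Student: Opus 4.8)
The plan is to verify the isomorphism $\LL B^*_{\Gal(K)}\circ\LL\iota^*\cong\id$ directly on generators. Since both functors are compositions of left Quillen functors between the relevant model categories (equivalently, colimit-preserving functors of the homotopy categories), and since $\catD_{\et}(K,\Lambda)\cong\Ho\mcU_{dg}(\Et/K)/(\et)$ is compactly generated by the objects $\Lambda(L)$ for $L\in\Et/K$ (or $L\in\Gal/K$), it suffices to produce a natural isomorphism $\LL B^*_{\Gal(K)}(\LL\iota^*\Lambda(L))\cong\Lambda(L)$ compatibly with the transition maps, and then invoke the fact that a natural transformation between colimit-preserving functors which is an isomorphism on generators is an isomorphism.

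First I would unwind $\LL\iota^*\Lambda(L)$: the functor $\iota\colon\Et/K\ra\Aff\Sm/K$ sends the finite \'etale $K$-algebra/space $L$ to the corresponding $0$-dimensional rigid variety $\Spa L$ (viewed in $\RigSm/K$), so $\LL\iota^*\Lambda(L)$ is the motive $\Lambda(\Spa L)$ in $\RigDA^{\eff}(K,\Lambda)$. Next I would compute $\LL B^*_{\Gal(K)}\Lambda(\Spa L)$ using the explicit formula $B_{\Gal(K)}(X)\colon \Spa L'\mapsto C_{\Sing}(|X_F|_{\Berk},\Lambda)^{\Gal(C/L')}$, evaluated (via Remark \ref{evL}) at each $L'\in\Gal/K$. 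For $X=\Spa L$, the base change $X_F=\Spa(L\otimes_K F)$ is a finite disjoint union of copies of $\Spa$ of finite separable extensions of $K$, hence $|X_F|_{\Berk}$ is a finite discrete set on which $\Gal(C/K)$ acts through a finite quotient; for $F$ large enough (containing the Galois closure of $L$) this set is $\Gal(F/K)$-equivariantly identified with $\Hom_K(L,F)\cong\Hom_K(L,C)$, a finite $\Gal(C/K)$-set. Therefore $C_{\Sing}(|X_F|_{\Berk},\Lambda)$ is concentrated in degree $0$, equal to $\Lambda[\Hom_K(L,C)]$, the free $\Lambda$-module on the Galois set $\Hom_K(L,C)$ — and this is precisely (under the equivalence of Proposition \ref{sigma}) the permutation Galois representation corresponding to the sheaf $\Lambda(L)$ on $\Et/K$. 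Taking $\Gal(C/L')$-invariants and the homotopy limit over $F$ (which is a limit of constant pro-systems once $F$ is large, using $\Q\subset\Lambda$ so that invariants are exact) recovers exactly $\Lambda(L)(L')$, naturally in $L'$. Hence $\LL B^*_{\Gal(K)}\Lambda(\Spa L)\cong\Lambda(L)$ in $\catD_{\et}(K,\Lambda)$, and I would check this identification is compatible with the maps induced by $K$-algebra maps $L\ra L'$, so it is an isomorphism of functors on the full subcategory of generators.

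Finally, I would assemble the pieces: the unit (or here, rather the canonical comparison) transformation $\LL B^*_{\Gal(K)}\circ\LL\iota^*\Rightarrow\id$ is the candidate isomorphism, it is an isomorphism on the generating set $\{\Lambda(L)\}$, both source and target preserve arbitrary (homotopy) colimits, and these generators compactly generate the triangulated category $\catD_{\et}(K,\Lambda)$; therefore the transformation is a natural isomorphism. I expect the main obstacle to be the careful bookkeeping in the second step — namely establishing that the homotopy limit $\holim_F B_F(\Spa L)$ genuinely collapses to the expected permutation representation with no higher cohomology, keeping track of the continuity of the Galois action and of the $\Q$-coefficient hypothesis that makes Galois invariants/coinvariants exact (cf.\ Proposition \ref{berkH} and Proposition \ref{isXC}, from which most of this is already extracted) — and, on the formal side, making precise the statement that a colimit-preserving natural transformation which is invertible on compact generators is invertible, which is standard but should be cited.
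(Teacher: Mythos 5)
Your proposal is correct and follows essentially the same route as the paper: the paper's proof fixes a finite Galois extension $L/K$, identifies $|\Spa(L\otimes_K F)|_{\Berk}$ with the finite discrete set $\Hom_K(L,F)$ for $F$ large, observes that $C_{\Sing}$ is then concentrated in degree $0$ as the permutation module $\bigoplus_{\Hom(L,C)}\Lambda$, and identifies this with $\sigma^*\Lambda(\Spec L)$ under Proposition~\ref{sigma}. The only difference is one of explicitness: you spell out the reduction to compact generators and the bookkeeping on the $\holim$ and on $\Gal$-invariants (justified by $\Q\subset\Lambda$), which the paper leaves implicit in its terse ``hence the claim.''
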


\begin{proof}
Let $L$ be a fixed finite Galois extension of $K$. The object $\LL(B_{\Gal (K)}\circ\iota)^*(\Lambda(L))$ is the following complex:
$$
\holim_FC_{\Sing}(|\Spa(L\otimes_KF)|,\Lambda)\cong\holim_FC_{\Sing}(\bigsqcup_{\Hom(L,F)}|*|,\Lambda)\cong\bigoplus_{\Hom(L,C)}\Lambda
$$
which is canonically isomorphic, as a Galois representation,  to $\varinjlim_F\Lambda(\Spec L)(F)$ hence the claim. 
\end{proof}

We recall that an object $X$ of a triangulated category is compact if $\Hom(X,-)$ commutes with direct sums. 

\begin{prop}\label{redtocp}
	Let $F\colon\catT\ra\catT'$ and $G\colon\catT'\ra\catT$ be triangulated functors  commuting with direct sums between  triangulated categories $\catT$ and $\catT'$ generated  (as triangulated categories with small sums) by a set of compact objects $\mathcal{K}$ and $\mathcal{K}'$ respectively. Suppose that $F(X)$ is compact for each $X\in\mathcal{K}$ and that there is an invertible transformation $F\circ G\cong\id$. In order to prove that $F$ is a left adjoint to $G$ it suffices to prove \begin{equation}\label{adj}\Hom(X[n],GY)\cong\Hom(FX[n],Y)\end{equation}
	where $n$ varies in $\Z$ and where $X$ and $Y$ vary in $\mathcal{K}$ and $\mathcal{K}'$ respectively.
\end{prop}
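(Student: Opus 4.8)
The plan is to reduce the adjunction statement to a statement about a set of compact generators using a standard "Brown representability / dévissage" argument. First I would fix $Y\in\catT'$ and consider the two functors $X\mapsto\Hom_{\catT}(X,GY)$ and $X\mapsto\Hom_{\catT'}(FX,Y)$ from $\catT^{\op}$ to $\Ab$. Both are cohomological; the first sends small coproducts in $\catT$ to products (automatic), and the second does so as well precisely because $F$ commutes with direct sums. There is a natural candidate transformation between them: the hypothesis $F\circ G\cong\id$ gives, for any $X$ and $Y$, a map
$$
\Hom_{\catT}(X,GY)\xrightarrow{\ F\ }\Hom_{\catT'}(FX,FGY)\cong\Hom_{\catT'}(FX,Y),
$$
and one checks this is natural in $X$ (and in $Y$). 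So the real content is that this map is an \emph{isomorphism} for all $X$, given that it is an isomorphism when $X$ ranges over $\mathcal K$ and is shifted by arbitrary $n\in\Z$.

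Next I would run the dévissage. Let $\catT_0\subseteq\catT$ be the full subcategory of those $X$ for which the comparison map is an isomorphism for the \emph{fixed} $Y$ (and hence, replacing $Y$ by nothing—$Y$ is already fixed—just for that $X$). Both source and target are homological functors in $X$, so $\catT_0$ is a triangulated subcategory (the five lemma handles the cone of a morphism between two objects of $\catT_0$), and it is closed under shifts in both directions. Since $F$ commutes with direct sums, the target $\Hom_{\catT'}(F(\bigoplus X_i),Y)=\Hom_{\catT'}(\bigoplus FX_i,Y)=\prod\Hom_{\catT'}(FX_i,Y)$ and the source $\Hom_{\catT}(\bigoplus X_i,GY)=\prod\Hom_{\catT}(X_i,GY)$ both turn coproducts into products compatibly, so $\catT_0$ is also closed under small coproducts. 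By hypothesis $\mathcal K[\Z]\subseteq\catT_0$, and by assumption $\catT$ is generated as a triangulated category with small sums by $\mathcal K$; hence $\catT_0=\catT$. This proves \eqref{adj} extends from $X\in\mathcal K$, $n\in\Z$, $Y\in\mathcal K'$ to all $X\in\catT$, $Y\in\mathcal K'$.

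Finally I would free up $Y$ by the symmetric argument: having fixed an arbitrary $X\in\catT$, the two functors $Y\mapsto\Hom_{\catT}(X,GY)$ and $Y\mapsto\Hom_{\catT'}(FX,Y)$ from $\catT'$ to $\Ab$ are both cohomological and send coproducts to products ($G$ commutes with direct sums for the first, trivially for the second), so the full subcategory of $Y$ on which the comparison is an isomorphism is triangulated, closed under shifts and small coproducts, and contains $\mathcal K'$; hence it is all of $\catT'$. Assembling the two steps gives a natural isomorphism $\Hom_{\catT}(X,GY)\cong\Hom_{\catT'}(FX,Y)$ for all $X\in\catT$, $Y\in\catT'$, which is exactly the statement that $F$ is left adjoint to $G$ (one should note in passing that the natural isomorphism so produced is the expected one, built from $FG\cong\id$, and in particular the unit of the adjunction is the given equivalence). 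The main obstacle is bookkeeping rather than depth: one must be careful that the comparison map is genuinely natural in \emph{both} variables before running the two dévissage steps, and that "generated as a triangulated category with small sums by $\mathcal K$" is used in the precise form that a triangulated subcategory closed under small coproducts and containing $\mathcal K$ must be everything—this is where compactness of the generators is implicitly irrelevant to the argument itself but guarantees such generating sets exist in $\catT$ and $\catT'$.
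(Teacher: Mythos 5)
There is a genuine gap in your second d\'evissage step, and it stems from running the two reductions in the wrong order. In your first step you fix $Y\in\mathcal K'$ and reduce over $X$; this is the \emph{contravariant} d\'evissage and it is fine: $\Hom_{\catT}(\bigoplus X_i,GY)\cong\prod\Hom_{\catT}(X_i,GY)$ and $\Hom_{\catT'}(F\bigoplus X_i,Y)\cong\Hom_{\catT'}(\bigoplus FX_i,Y)\cong\prod\Hom_{\catT'}(FX_i,Y)$ hold for formal reasons, using only that $F$ commutes with coproducts. But in your second step you fix an \emph{arbitrary} $X\in\catT$ and reduce over $Y$; this is the \emph{covariant} d\'evissage, and the assertion that $Y\mapsto\Hom_{\catT}(X,GY)$ and $Y\mapsto\Hom_{\catT'}(FX,Y)$ "send coproducts to products" is false. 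For a covariant $\Hom$ functor, $\Hom(X,\bigoplus Z_i)$ is in general neither $\prod\Hom(X,Z_i)$ nor $\bigoplus\Hom(X,Z_i)$; the latter holds exactly when $X$ is compact. So closure under coproducts in the $Y$-variable requires both $X$ and $FX$ to be compact, and since your $X$ is by then arbitrary (and $F$ is only assumed to preserve compactness on $\mathcal K$), the step does not go through.

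The paper runs the two reductions in the opposite order, which is the one that works: first fix $X\in\mathcal K$ (so $X$ is compact and $FX$ is compact by hypothesis) and perform the covariant d\'evissage over $Y$, using compactness of $X$ and $FX$ together with the fact that $F$ and $G$ commute with coproducts; this shows $Y\mapsto\Hom_{\catT}(X,GY)$ is corepresented by $FX$ for each $X\in\mathcal K$. Then the passage to arbitrary $X$ is the contravariant d\'evissage (delegated in the paper to a cited lemma), which is automatic. Your closing remark that compactness of the generators is "implicitly irrelevant to the argument" is symptomatic of the error: compactness of $X\in\mathcal K$ and of $FX$ is precisely what is needed to make the covariant d\'evissage run, and your ordering hides exactly the place where it is used.
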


\begin{proof}
	The invertible transformation gives rise to  a bi-functorial map
	$$\Hom(X,GY)\ra \Hom(FX,FGY) \cong \Hom(FX,Y).
	$$
	We want to show it is invertible for all $X$ and $Y$ by knowing it is invertible for  a set of compact generators of the two categories, and their shifts.
	
	Fix an object $X$ in the chosen class of compact generators $\mathcal{K}$ and let $\cat$ be the full subcategory of $\catT'$ whose objects $Y$ are such that \eqref{adj} is invertible for all $n$. Let $Y_1$ and $Y_2$ be in $\cat$ and pick a distinguished triangle 
	$$
	Y_1\ra Y_2\ra C\ra
	$$
	By the map of long exact sequences
	$$
	\resizebox{\textwidth}{!}{
		\xymatrix{
			\Hom(X,GY_1)\ar[d]^{\sim}\ar[r]&\Hom(X,GY_2)\ar[d]^{\sim}\ar[r]&\Hom(X,GC)\ar[d]\ar[r]&\Hom(X,GY_1[1])\ar[d]^{\sim}\ar[r]&\\
			\Hom(FX,Y_1)\ar[r]&\Hom(FX,Y_1)\ar[r]&\Hom(FX,C)\ar[r]&\Hom(FX,Y_1[1])\ar[r]&
		}
	}
	$$
	we deduce that $C$ is also in $\cat$. Let now $\{Y_i\}_{i\in I}$ be a class of objects in $\cat$. As $F$ maps compact objects to compact objects, and both $F$ and $G$ commute with direct sums we deduce:
	$$
	\begin{aligned}
	\Hom(X,G\bigoplus Y_i )&\cong \Hom(X,\bigoplus G Y_i )\cong \bigoplus \Hom(X,G Y_i )\\ &\cong
	\bigoplus \Hom(F X, Y_i )\cong \Hom(F X, \bigoplus Y_i ).
	\end{aligned}$$
	We have then showed that $\cat$ is closed both under direct sums and under cones, and it contains a family of generators for $\catT'$ and hence it coincides with it.
	
	We have then showed that for a class of compact generators $X$, the functor $Y\mapsto\Hom(X,GY)$ is corepresentable by $F X$. It suffices to invoke \cite[Lemma 5.6]{vezz-fw} to conclude.
\end{proof}

\begin{prop}\label{1isloc}
Suppose $\Q\subset\Lambda$. The object $\Lambda[n]$ is $\B^1$-local (even $\Frob$-$\B^1$-local if $K$ is perfect) in $\Ch\Sh_{\et}(\RigSm/K,\Lambda)$ and for any motive $\Lambda(X)$ of a smooth rigid analytic variety $X$, we have $\Hom(\Lambda(X),\Lambda[n])\cong H^n_{\Sing}(|X|_{\Berk},\Lambda)$.
\end{prop}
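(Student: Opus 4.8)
The plan is to translate the Hom-group into étale cohomology and then feed it through the comparison theorems of Berkovich. Recall that the homotopy category of $\Ch\Sh_{\et}(\RigSm/K,\Lambda)$ is the derived category $\catD(\Sh_{\et}(\RigSm/K,\Lambda))$ and that $\Lambda(X)$ is the étale sheafification of the free $\Lambda$-module on the presheaf represented by $X$. First I would observe that restriction of a big-site sheaf to the small étale site $X_{\et}$ is exact and identifies $\Hom(\Lambda(X),\mathcal F^{\bullet})$, computed in $\catD$, with $R\Gamma(X_{\et},\mathcal F^{\bullet}|_{X_{\et}})$, and carries the constant sheaf $\Lambda$ to the constant sheaf; hence $\Hom(\Lambda(X),\Lambda[n])\cong H^n_{\et}(X,\Lambda)$, étale cohomology of $X$ with constant coefficients.

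The essential input is then Berkovich's comparison isomorphism $H^n_{\et}(X,\Lambda)\cong H^n(|X|_{\Berk},\Lambda)$ between this cohomology and the sheaf cohomology of the underlying Berkovich space (\cite{berk-contr}); combined with local contractibility, which identifies the right-hand side with $H^n_{\Sing}(|X|_{\Berk},\Lambda)$ (Proposition~\ref{berkH}(i)), this proves the displayed formula for $X$ smooth affinoid. I would then extend it to quasi-compact smooth $X$ by Mayer--Vietoris along a finite cover by affinoid opens (valid on the left by étale descent of the constant sheaf and on the right as the usual Mayer--Vietoris for an open cover of a topological space), and to an arbitrary smooth $X$ by writing $X=\colim_U U$ as the filtered colimit of its quasi-compact open subspaces, so that $\Lambda(X)=\colim_U\Lambda(U)$ and $|X|_{\Berk}=\colim_U|U|_{\Berk}$. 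Since each $H^n_{\et}(U,\Lambda)\cong H^n_{\Sing}(|U|_{\Berk},\Lambda)$ is finite-dimensional by Proposition~\ref{berkH}(i), the resulting cofiltered systems are Mittag--Leffler and all $\varprojlim^{i}$ with $i>0$ vanish, so the formula passes to the limit compatibly on both sides.

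For the $\B^1$-locality I would use that the objects $\Lambda[n]$ are $\B^1$-local for every $n$ precisely when every projection $\pi_X\colon X\times\B^1\to X$ induces a bijection $\Hom(\Lambda(X),\Lambda[n])\to\Hom(\Lambda(X\times\B^1),\Lambda[n])$, i.e. when $\Lambda[n]$ is right-orthogonal to all cones of the maps $\Lambda(\pi_X)$. As the cone of $\Lambda(\pi_X)$ for a general $X$ is the filtered colimit of the cones for the quasi-compact opens of $X$, hence lies in the localizing subcategory they generate, it suffices to treat quasi-compact $X$; there the previous step gives $\Hom(\Lambda(X\times\B^1),\Lambda[n])\cong H^n_{\Sing}(|X\times\B^1|_{\Berk},\Lambda)$, and the projection $|X\times\B^1|_{\Berk}\to|X|_{\Berk}$ is a weak homotopy equivalence (as shown in the proof of Proposition~\ref{maintop}, using Berkovich's contractibility of the relative disc, cf.\ Proposition~\ref{berkH}(v)), so $\pi_X$ induces an isomorphism on $H^n_{\Sing}(|{-}|_{\Berk},\Lambda)$ and hence on $\Hom(-,\Lambda[n])$. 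As $\Lambda[n]$ is already étale-fibrant, the identification $\Hom(\Lambda(X),\Lambda[n])\cong H^n_{\Sing}(|X|_{\Berk},\Lambda)$ then holds verbatim in $\RigDA^{\eff}(K,\Lambda)$.

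I expect the main difficulty to lie not in any single new argument but in the bookkeeping for non-quasi-compact varieties: confirming that Berkovich's comparison globalizes beyond the affinoid (or paracompact) case and that the two filtered-colimit descriptions of $\Lambda(X)$ and of $|X|_{\Berk}$ match up. This is exactly the point at which the finiteness of Proposition~\ref{berkH}(i) is essential, since it kills the $\varprojlim^{1}$ and higher obstructions; everything else — the small-versus-big-site comparison, Mayer--Vietoris, and the orthogonality reduction to a generating set — is formal.
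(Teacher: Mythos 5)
Your overall plan---identify $\Hom(\Lambda(X),\Lambda[n])$ with \'etale cohomology, compare that to the cohomology of the Berkovich space, then to singular cohomology via local contractibility, and finally get $\B^1$-locality from contractibility of the disc---is the same as the paper's. However, the key comparison step is mis-sourced in a way that leaves a genuine gap. You attribute the isomorphism $H^n_{\et}(X,\Lambda)\cong H^n(|X|_{\Berk},\Lambda)$ to \cite{berk-contr}, but that reference proves local contractibility and cohomological finiteness of smooth Berkovich spaces, not a collapse of \'etale cohomology to topological cohomology. The collapse is precisely where the standing hypothesis $\Q\subset\Lambda$ does all the work (with torsion coefficients it fails badly), and the paper gets it in two explicit stages: $H^n_{\et}(X,\Lambda)\cong H^n(|X|,\Lambda)$ for the Huber topological space via \cite[Remark 4.2.6-1]{dJ-vdP}, and then $H^n(|X|,\Lambda)\cong H^n(|X|_{\Berk},\Lambda)$ by overconvergence \cite[Proposition 8.2.6]{huber}, i.e.\ passing to the maximal Hausdorff quotient. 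Your write-up does not flag where $\Q\subset\Lambda$ enters, and the citation you give does not contain the statement you need; as it stands, this is the one step that would not survive scrutiny.

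Once that comparison is supplied, the remaining structure of your argument is correct but over-engineered. The affinoid $\to$ quasi-compact (Mayer--Vietoris) $\to$ general (filtered colimits plus Mittag--Leffler from Proposition~\ref{berkH}(i)) reduction is not needed: the de Jong--van der Put and Huber results already apply to smooth rigid analytic varieties in the required generality, and Berkovich's local contractibility, being a local property, identifies sheaf and singular cohomology without first passing to affinoids. Your $\B^1$-locality argument via right-orthogonality to the cones of $\Lambda(\pi_X)$ and the weak equivalence $|X\times\B^1|_{\Berk}\to|X|_{\Berk}$ is a perfectly good rephrasing of what the paper derives directly from the homotopy invariance established in Proposition~\ref{maintop}.
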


\begin{proof}
The fact that $H^n_{\et}(X,\Lambda)=H^n(|X|,\Lambda)$  follows from \cite[Remark 4.2.6-1]{dJ-vdP}. By overconvergence \cite[Proposition 8.2.6]{huber} we obtain $H^n(|X|,\Lambda)\cong  H^n(|X|_{\Berk},\Lambda)$ which coincides with its singular cohomology (see Proposition \ref{berkH}\eqref{b1}). We already proved the homotopy invariance of singular cohomology in Proposition \ref{maintop}. It is also Frobenius-invariant as the Frobenius induces a homeomorphism on $|X|_{\Berk}$.
\end{proof}

We are finally ready to prove Theorem \ref{mainadj}.

\begin{proof}[Proof of Theorem \ref{mainadj}]
 The functors $\LL\iota^* $ and $\LL B_{\Gal(K)}^*$ send compact objects to compact objects and commute with direct sums. By means of Propositions \ref{unit}, \ref{redtocp} and \cite[Theorem 1.2.34]{ayoub-rig}, it suffices to show that 
\begin{equation}\label{adj2}\Hom(\Lambda(X)[n],\LL\iota^*\Lambda(Y))\cong\Hom(\LL B^*_{\Gal(K)}\Lambda(X)[n],\Lambda(Y))\end{equation}
whenever $X$ is a connected, smooth quasi-compact rigid analytic variety 
and $Y=\Spa K'$ is Galois over $\Spa K$. We can consider the following Quillen adjunction (extending to $\Frobet$-motives too)
\begin{equation}\label{LeRe}
\adj{\LL e^*_{K'/K}}{\RigDA^{\eff}_{\et}(K,\Lambda)}{\RigDA^{\eff}_{\et}(K',\Lambda)}{\RR e_{K'/K*}}
\end{equation}
arising from the base change functor $\RigSm/K\ra\RigSm/K'$. From the equivalences
$$\begin{aligned}
\Hom(\Lambda(X)[n],\LL\iota^*\Lambda(Y))&\cong \Hom(\LL e^*_{K'/K}\Lambda(X)[n],\LL e^*_{K'/K}\LL\iota^*\Lambda(Y))^{\Gal(K'/K)}\\&\cong \Hom(\LL e^*_{K'/K}\Lambda(X)[n],\LL\iota^*\LL e^*_{K'/K}\Lambda(Y))^{\Gal(K'/K)}
\end{aligned}
$$ 
and
$$
\begin{aligned}
\Hom(\LL B^*_{\Gal(K)}\Lambda(X)[n],\Lambda(Y))&\cong \Hom(\LL e^*_{K'/K}\LL B^*_{\Gal(K)}\Lambda(X)[n],\LL e^*_{K'/K}\Lambda(Y))^{\Gal(K'/K)}\\&\cong \Hom(\LL B^*_{\Gal(K')}\LL e^*_{K'/K}\Lambda(X)[n],\LL e^*_{K'/K}\Lambda(Y))^{\Gal(K'/K)}
\end{aligned}
$$
we then deduce that we can prove \eqref{adj2} up to a finite Galois extension of the base field. In particular, we can assume 
 that $Y\cong\Lambda^{\oplus N}$ or even $Y\cong\Lambda$. 

We first remark that by Proposition  \ref{berkH}  and Proposition \ref{isXC} we have $$\Hom(\LL B_{\Gal(K)}^*\Lambda(X)[n],\Lambda)\cong  H^n_{\Sing}(|X_C|_{\Berk},\Lambda)^{\Gal(K^{\sep}/K)}\cong H^n_{\Sing}(|X|_{\Berk},\Lambda). $$  This also coincides with $\Hom(X[n],\Lambda)$ by means of  
 Proposition \ref{1isloc}, proving the statement.
\end{proof}

\begin{rmk}We now suppose that $K$ is perfect and we let $k$ be its residue field. 
	In the algebraic  context, the adjunction of Theorem \ref{main0} is studied  in \cite[Section 2.3]{ayoub-bv} and \cite[Section 2.2]{ayoub-zucker}. 
	We can show that the functor of Remark \ref{resgen} (in its version with transfers) is compatible with the functors $\omega_0$ in the sense that the following square is commutative:
	$$\xymatrix{
	\DM^{\eff}_{\et}(k,\Lambda)\ar[r]\ar[d]^{\omega_0}&\RigDM^{\eff}(K,\Lambda)\cong\RigDA^{\eff}_{\Frobet}(K,\Lambda)\ar[d]^{\omega_0}\\
	\catD_{\et}(k,\Lambda)\ar[r]&\catD_{\et}(K,\Lambda)
}
	$$
	In order to prove this, we can alternatively check the compatibility of the right adjoint functors. We recall that the functor on the top side is defined by means of the special fiber functor (inducing an equivalence) and the generic fiber functor defined on motives of formal schemes. Arguing like in the  proof of Theorem \ref{mainadj}, it suffices to show that for a geometrically connected, quasi-compact smooth formal scheme $\mfX/\mcO_K$ and any $n$, the complex $\Hom_\bullet(\Lambda(\mfX_{\eta}),\Lambda)$ is quasi-isomorphic to $\Hom_\bullet(\Lambda(\mfX_k),\Lambda)$. The former is quasi-isomorphic to $\Lambda[0]$ as shown in the previous proof (rigid varieties of good reduction are contractible by \cite[Section 5]{berk-contr}). The same holds for the latter, as shown in \cite[Corollary 4.2]{mvw}.
\end{rmk}

\begin{rmk}
Since $\LL B^*_{\Gal(K)}$ descends to the $\Frobet$-localization, we deduce from the adjunction above that the objects $\iota^*M$ are $\Frob$-local and hence ${\RigDA^{\eff}_{\Frobet}(K,\Lambda)_0}\cong{\RigDA^{\eff}_{\et}(K,\Lambda)_0}$. We will refer unambiguously to this category with ${\RigDA^{\eff}(K,\Lambda)_0}$.
\end{rmk}

\begin{rmk}\label{Bla}
As a corollary of Theorem \ref{main0}, we also obtain  that if $\Q\subset\Lambda$, the functor $\LL B^*$ of \ref{mainQ} is a left adjoint to the canonical functor $\catD(\Lambda)\ra\RigDA_{\et}^{\eff}(K,\Lambda)$ given by $\Lambda\mapsto\Lambda(K)$. Indeed, by comparing the right adjoint functors on the two sides, it suffices to check that $\LL(-)^*_{\Gal(K)}\circ\LL B^*_{\Gal(K)}\cong\LL B^* $ which follows from the isomorphism $ H^i(|X_L|_{\Berk},\Lambda)^{\Gal(L/K)}\cong H^i(|X|_{\Berk},\Lambda)$ of Proposition \ref{berkH} 
\end{rmk}

\begin{rmk}\label{rmkKperf}
Let $K^{\perf}$ be the completed perfection of $K$. We remark that the restriction of the adjunction in \eqref{LeRe} to Artin motives is an equivalence, since the two fields have the same Galois group. We remark also that $\LL e^*_{K^{\perf}/K}\circ \LL B_{\Gal(K)}^*\cong  \LL B_{\Gal(K^{\perf})}^*\circ \LL e^*_{K^{\perf}/K}$. Indeed, the two spaces $|X_K|_{\Berk}$ and $|X_{K^{\perf}}|_{\Berk}$ are actually homeomorphic, by \cite[Proposition 1.3.5(ii)]{berkovich}. 
\end{rmk}

We recall once more that an object $X$ of a triangulated category is compact if $\Hom(X,-)$ commutes with direct sums. Examples of compact objects in $\RigDA^{\eff}_{\Frobet}(K,\Lambda)$ are motives of quasi-compact smooth rigid analytic varieties over $K$ (see \cite[Proposition 1.2.34]{ayoub-rig}) and motives attached to the analytification of smooth algebraic varieties over $K$ (they are dualizable objects in the stable motivic category, by \cite[Lemma 1.3.29 and Lemma 2.5.30]{ayoub-rig} hence compact. They are also compact in the effective category by the Cancellation Theorem \cite[Corollary 2.5.49]{ayoub-rig}). The full subcategory of compact objects in a category $\catT$ will be denoted by $\catT^{\cp}$.

\begin{prop}
The adjunction of Theorem \ref{main0} restricts to compact objects defining   a left adjoint functor
$$
{\omega_0}\colon{\RigDA^{\eff}_{\et}(K,\Lambda)^{\cp}}\ra{\RigDA^{\eff}(K,\Lambda)_0^{\cp}}
$$
to the inclusion functor. If $K$ is perfect, the same is true for the adjunction defined on ${\RigDA^{\eff}_{\Frobet}(K,\Lambda)}$.

\end{prop}

\begin{proof}
It suffices to show that the functors $\LL\iota^*$ and $\LL B_{\Gal(K)}^*$ send a set of compact generators of the two categories to compact objects. For $\LL\iota^*$ this is immediate. For $\LL B_{\Gal(K)}^*$ this follows from Propositions \ref{berkH} and \ref{isXC}.
\end{proof}

\begin{rmk}
The functors $\LL B^*_{\Gal(K)}$ and $\omega_0$ defined above are tensorial, with respect to the monoidal structure on rigid analytic motives (see \cite[Propositions 4.2.76 and 4.4.63]{ayoub-th2}). Indeed, it suffices to check that for two rigid analytic varieties $X$ and $Y$ over $K$ the singular complex with $\Q$-coefficients  $C_{\Sing}(|X\times Y|_C)$ is quasi-isomorphic to $C_{\Sing}(|X|_C)\otimes C_{\Sing}(|Y|_C)$. This follows from \cite[Corollary 8.7]{berk-contr} and the usual K\"unneth formula for singular homology.
\end{rmk}

\begin{rmk}The fact that we are dealing with the category $\RigDA_{\et}^{\eff}(K,\Lambda)$ (and not simply with $\RigSH^{\eff}_{\et}(K)$) and the hypothesis $\Q\subset\Lambda$ are  used in this section several times: for example, in order to deduce properties of the  functor $L\mapsto C_{\Sing}(|X_L|,\Lambda)$ (related to homology) out of the properties of singular \emph{co}-homology of Berkovich spaces (see Definition \ref{defBGal}) as well as to invoke the result of \cite{dJ-vdP} in Proposition \ref{1isloc}.
\end{rmk}

\section{Compatibility with the tilting equivalence}\label{tilt}

Suppose now that $K$ is a perfectoid field of characteristic $0$ (that is, a complete valued field of mixed characteristic $(0,p)$ endowed with a non-discrete valuation, such that Frobenius is surjective on $\mcO_K/p$ see \cite[Definition 3.1]{scholze}) and $\Q\subset\Lambda$. Under such hypotheses, we can define a perfect  complete valued field $K^\flat$ of positive characteristic (the tilt of $K$) and construct a "motivic tilting equivalence" (see \cite{vezz-fw}):
\[
\RigDA^{\eff}_{\et}(K,\Lambda)\cong\PerfDA^{\eff}_{\et}(K,\Lambda)\cong\PerfDA^{\eff}_{\et}(K^\flat,\Lambda)\cong\RigDA^{\eff}_{\Frobet}(K^\flat,\Lambda)
\]
which is a obtained by ``descending'' Scholze's tilting equivalence between perfectoid spaces over $K$ and $K^\flat$ (see \cite[Proposition 6.17]{scholze}). 

On the other hand, the category $\catD_{\et}(K,\Lambda)\cong\RigDA^{\eff}(K,\Lambda)_0$ is equivalent to $\catD_{\et}(K^\flat,\Lambda)\cong\RigDA^{\eff}(K^\flat,\Lambda)_0$ by means of the functor that associates to a (perfectoid) finite \'etale extension $L/K$ the extension $L^\flat/K^\flat$: indeed Scholze's tilting equivalence restricts to an equivalence over the finite \'etale extensions of $K$ and $K^\flat$ (this is the classic theorem of Fontaine and Wintenberger). We now specify that the two equivalences above are compatible with each other, and also to the Berkovich realization defined above. %

\begin{prop}
Let $K$ be a perfectoid field  and let $\Lambda$ be a $\Q$-algebra. The functor $\omega_0$ commutes with the tilting equivalence.
\end{prop}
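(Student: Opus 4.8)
The plan is to reduce the claimed compatibility to a diagram of left adjoints, all of which are already known to commute, and then invoke uniqueness of adjoints. Recall that $\omega_0=\LL\iota^*\circ\LL B^*_{\Gal(K)}$, so it factors through $\catD_{\et}(K,\Lambda)$. On the side of $K^\flat$ we have analogously $\omega_0^\flat=\LL\iota^*\circ\LL B^*_{\Gal(K^\flat)}$ factoring through $\catD_{\et}(K^\flat,\Lambda)$. The tilting equivalence $\RigDA^{\eff}_{\et}(K,\Lambda)\cong\RigDA^{\eff}_{\Frobet}(K^\flat,\Lambda)$ and the Fontaine--Wintenberger equivalence $\catD_{\et}(K,\Lambda)\cong\catD_{\et}(K^\flat,\Lambda)$ are both given (up to the intermediate perfectoid categories) by the ``tilt'' functor on finite \'etale extensions $L\mapsto L^\flat$. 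The whole statement therefore amounts to the commutativity of the square
$$
\xymatrix{
\RigDA^{\eff}_{\et}(K,\Lambda)\ar[r]^{\omega_0}\ar[d]_{\cong}&\catD_{\et}(K,\Lambda)\ar[d]^{\cong}\\
\RigDA^{\eff}_{\Frobet}(K^\flat,\Lambda)\ar[r]^{\omega_0^\flat}&\catD_{\et}(K^\flat,\Lambda)
}
$$
up to natural isomorphism, where we have implicitly used Theorem \ref{main0}/\ref{mainadj} on both sides to identify the essential image of $\LL\iota^*$ with the subcategory of Artin motives, so that $\omega_0$ and $\omega_0^\flat$ are honest left adjoints to the respective inclusions.

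First I would observe that, because $\omega_0$ (resp. $\omega_0^\flat$) is characterized by Theorem \ref{main0} as the \emph{left adjoint} to the fully faithful inclusion $\RigDA^{\eff}(K,\Lambda)_0\hookrightarrow\RigDA^{\eff}(K,\Lambda)$ (resp. over $K^\flat$), it suffices to check that the tilting equivalence $\Phi\colon\RigDA^{\eff}_{\et}(K,\Lambda)\xrightarrow{\sim}\RigDA^{\eff}_{\Frobet}(K^\flat,\Lambda)$ restricts to an equivalence on the subcategories of Artin motives $\RigDA^{\eff}_{\et}(K,\Lambda)_0\xrightarrow{\sim}\RigDA^{\eff}_{\Frobet}(K^\flat,\Lambda)_0$. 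Once that is established, the left adjoint to the inclusion on one side is carried by $\Phi$ to the left adjoint to the inclusion on the other side, by the usual $2$-categorical transport of adjunctions, and that is exactly the assertion $\Phi\circ\omega_0\cong\omega_0^\flat\circ\Phi$. So the real content is: $\Phi$ preserves Artin motives. For this I would use the explicit description of $\Phi$ through the intermediate perfectoid categories, together with the fact (the motivic Fontaine--Wintenberger theorem, as recalled in the paragraph preceding the proposition) that the tilt functor $L\mapsto L^\flat$ induces an equivalence $\Et/K\simeq\Et/K^\flat$, hence $\catD_{\et}(K,\Lambda)\simeq\catD_{\et}(K^\flat,\Lambda)$, and that this equivalence is compatible with the two functors $\LL\iota^*$ on the nose: the square relating $\iota\colon\Et/K\hookrightarrow\RigSm/K$, $\iota^\flat\colon\Et/K^\flat\hookrightarrow\RigSm/K^\flat$, the tilt on small sites, and the perfectoidification on big sites, commutes at the level of sites and hence after passing to the localized model categories. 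Since $\RigDA^{\eff}(K,\Lambda)_0$ is by definition the essential image of $\LL\iota^*$, compatibility of $\LL\iota^*$ with tilting gives precisely that $\Phi$ matches the two subcategories of Artin motives, and one then concludes as above.

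The step I expect to be the main obstacle is the careful bookkeeping in the chain
$$
\RigDA^{\eff}_{\et}(K,\Lambda)\cong\PerfDA^{\eff}_{\et}(K,\Lambda)\cong\PerfDA^{\eff}_{\et}(K^\flat,\Lambda)\cong\RigDA^{\eff}_{\Frobet}(K^\flat,\Lambda)
$$
needed to check that each of the three equivalences separately carries Artin motives to Artin motives and is compatible with the corresponding functor $\LL\iota^*$; in particular the middle equivalence (genuine tilting) and the outer equivalences (perfectoidification, which only changes the topology from $\et$ to $\Frobet$ and passes to the perfection) must each be checked against the inclusion of the small \'etale site, and the identification $\catD_{\et}(\cdot,\Lambda)\cong\RigDA^{\eff}_{\cdot}(\cdot,\Lambda)_0$ from Theorem \ref{main0} must be threaded through consistently. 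All of these compatibilities are essentially formal consequences of the functoriality of the construction $\mcU_{dg}(\cat)/(\tau,I)$ in the triple $(\cat,\tau,I)$ (the Remark following Proposition \ref{martin2}) applied to the evident morphisms of sites, but assembling them into a single coherent natural isomorphism $\Phi\circ\omega_0\cong\omega_0^\flat\circ\Phi$ — rather than merely an objectwise isomorphism — is where the care is required. Once the equivalences are known to restrict to the Artin subcategories compatibly with $\LL\iota^*$, the passage to left adjoints and hence to $\omega_0$ is immediate and requires no further computation.
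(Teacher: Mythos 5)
Your outline matches the paper's argument: reduce, via uniqueness of (left) adjoints, to the commutativity of the square built from $\LL\iota^*$ and the two tilting equivalences, and then decompose the motivic tilting equivalence into its three factors and check each one against the inclusion of the small \'etale site. This is exactly what the paper does, so the approach is the same and the reduction step is sound. The one place where your description is a bit too optimistic is the claim that the remaining compatibilities are \emph{all} formal consequences of the functoriality of $\mcU_{dg}(\cat)/(\tau,I)$ in the triple applied to morphisms of sites. For the lower square (comparing $\iota_2$ with $j\circ\iota_3$) and for the triangle on the $K^\flat$-side (comparing $\iota_1$ with $\Perf\circ\iota_3$, using that finite \'etale extensions of $K^\flat$ are already perfect), this is indeed the whole story. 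But the factor $\LL i_!$ in the decomposition
$$
\PerfDA^{\eff}(K,\Lambda)\stackrel{\LL j^*}{\longrightarrow}\wRigDA_{\widehat{\B}^1}^{\eff}(K,\Lambda)\stackrel{\LL i_!}{\longrightarrow}\RigDA^{\eff}(K,\Lambda)
$$
is not itself of the form $\LL(\text{morphism of sites})^*$: it is the \emph{left adjoint} of the site-induced functor $\LL i^*$. To get the upper triangle to commute, the paper first establishes $\LL\iota_2^*\cong\LL i^*\circ\LL\iota_1^*$ by site functoriality, and then invokes the nontrivial, tilting-specific fact that $\LL i_!\circ\LL i^*\cong\id$ to pass from $\LL\iota_2^*$ to $\LL\iota_1^*\cong\LL i_!\circ\LL\iota_2^*$. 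That extra ingredient is not a formal consequence of the Remark following Proposition \ref{martin2}, and your proposal does not anticipate it, though you do correctly flag the bookkeeping through the intermediate categories as the place where care is needed. With that one point filled in, your argument coincides with the paper's.
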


\begin{proof}
By means of the adjunction property, we can alternatively prove that the following diagram is commutative
$$
\xymatrix{
\catD_{\et}(K,\Lambda)\ar[r]^-{\LL\iota^*} \ar@{<->}[d]^{\sim} &
		\RigDA^{\eff}_{\et}(K,\Lambda)\ar@{<->}[d]^{\sim}\\
	\catD_{\et}(K^\flat,\Lambda)\ar[r]^-{\LL\iota^*} &
	\RigDA^{\eff}_{\Frobet}(K^\flat,\Lambda)
	}
$$
for a perfectoid field $K$ of characteristic zero with tilt $K^\flat$.

We will now decompose this diagram in some sub-squares following the picture of \cite[Page 40]{vezz-fw}. We recall (see \cite[Theorem 7.11]{vezz-fw}) that the equivalence $\PerfDA^{\eff}_{\et}(K,\Lambda)\cong\RigDA^{\eff}_{\et}(K,\Lambda)$ is obtained as the composite of the two functors
$$
\PerfDA^{\eff}_{\et}(K,\Lambda)\stackrel{\LL j^*}{\ra}\wRigDA_{\widehat{\B}^1}^{\eff}(K,\Lambda)\stackrel{\LL i_!}{\ra}\RigDA^{\eff}_{\et}(K,\Lambda)
$$
where the category in the middle is the category of semi-perfectoid motives (denoted by $\wwRigDA_{\widehat{\B}^1}^{\eff}(K,\Lambda)$ in \cite[Definition 3.22]{vezz-fw}) the functor $\LL j^*$ is induced by the inclusion of smooth perfectoid spaces inside smooth semi-perfectoid spaces, while $\LL i_!$ is the left adjoint of the functor $\LL i^*$  induced by the inclusion of smooth rigid analytic varieties inside smooth semi-perfectoid spaces.

 First, we consider the diagram
$$
\xymatrix{
	&	\RigDA^{\eff}_{\et}(K,\Lambda)\\
	\catD_{\et}(K,\Lambda)\ar[ur]^-{\LL\iota_1^*}\ar[r]^-{\LL\iota_2^*} \ar[dr]_-{\LL\iota_3^*}  &
		\wRigDA^{\eff}_{\widehat{\B}^1}(K,\Lambda)\ar[u]_{\LL i_!}\\
	 &
	\PerfDA^{\eff}_{\et}(K,\Lambda)\ar[u]_{\LL j^*}\ar@/_4pc/[uu]_{\sim}
}
$$
where we indicate with $\iota_1,\iota_2,\iota_3$ the inclusion of the small \'etale site over $K$ in the big \'etale site of rigid analytic varieties resp. smooth semi-perfectoid spaces resp. smooth perfectoid spaces. The lower square commutes by the equivalence $\iota_2\cong j\circ \iota_3$. Similarly, we have an equivalence $\iota_2\cong  i\circ\iota_1$ which implies $\LL\iota_2^*\cong\LL i^*\circ\LL\iota_1^*$. Since $\LL i_!\circ\LL i^*$ is equivalent to the identity by \cite[Theorem 5.5]{vezz-fw}, this yields $\LL\iota_1^*\cong\LL i_!\circ\LL\iota^*_2$ hence the commutativity of the upper triangle.

We now consider the following square (see \cite[Proposition 3.23]{vezz-fw})
$$
\xymatrix{
	\catD_{\et}(K,\Lambda)\ar[r]^-{\LL\iota^*} \ar@{<->}[d]^{\sim} &
	\PerfDA^{\eff}_{\et}(K,\Lambda)\ar@{<->}[d]^{\sim}\\
	\catD_{\et}(K^\flat,\Lambda)\ar[r]^-{\LL\iota^*} &
	\PerfDA^{\eff}_{\et}(K^\flat,\Lambda)
}
$$
which commutes by definition of the tilting equivalence on both sides.

We are left to consider the triangle (see \cite[Theorem 6.9]{vezz-fw})
$$
\xymatrix{
	&	\RigDA^{\eff}_{\Frobet}(K^\flat,\Lambda)\ar[dd]^{\LL \Perf^*}_{\sim}\\
	\catD_{\et}(K^\flat,\Lambda)\ar[ur]^-{\LL\iota_1^*} \ar[dr]_-{\LL\iota_3^*} \\
	&
	\PerfDA^{\eff}_{\et}(K^\flat,\Lambda)
}
$$
where now the equivalence on the right is induced simply by means of the (completed) perfection functor $\Perf$. It is then immediate to prove it commutes (a finite \'etale extension of $K^\flat$ is already perfect).
	\end{proof}

\section{Compatibility with the \'etale realization}\label{etale}

We show in this section that our main theorem in Section \ref{artq} can be interpreted as a motivic version of the results of Berkovich \cite{berk-tate} showing that the singular cohomology $H^*_{\Sing}(|X^{\an}_C|_{\Berk},\Q_\ell)$ of the Berkovich space associated to the analytification of an algebraic variety over $K$ are canonically isomorphic to the weight-zero part of the \'etale cohomology $H^*_{\et}(X_C,\Q_\ell)$ for $\ell\neq p$. In particular, we show how to obtain these equivalence via our theorem and the \'etale realization. This allows us to generalize them further to arbitrary analytic varieties.

From now on, we assume that the residue field of $K$ is finite of characteristic $p$ and we pick a prime $\ell\neq p$. The functors that we will consider are insensitive to  base change over the completed perfection of $K$ (see the remark in \cite[Proposition 2.3.7]{huber} and Remark \ref{rmkKperf}). We will then assume for simplicity that $K$ is perfect.

We recall here the basic properties of the $\ell$-adic realization functor for rigid analytic motives, constructed in \cite[Section 3.1]{bamb-vez} (see also \cite[Example 2.23]{ayoub-nr}).

\begin{prop}\label{Rell}
Fix a prime  $\ell$ coprime to the residue characteristic $p$ of $K$. There is a triangulated monoidal functor $$
\mathfrak{R}_{\et,\ell}\colon\RigDA^{\eff}_{\Frobet}(K,\Q)^{\cp}\ra{\widehat{\catD}}_{\et}^{\cp}(K,\Q_\ell)
$$
where the category ${\widehat{\catD}}_{\et}^{\cp}(K,\Q_\ell)$ is the derived category of constructible $\ell$-adic  sheaves following Ekedhal (see \cite[Definition 9.3]{ayoub-etale} and \cite[Section 5.5]{bs}). It has the following properties:\begin{enumerate}
	\item $\mathfrak{R}_{\et,\ell}$ is tensorial and triangulated.
		\item For any smooth rigid analytic variety $X$, the Galois representation attached to $(H_i\mathfrak{R}_{\et,\ell}(\Q_\ell(X)))^\vee$ is the \'etale representation $H^i_{\et}(X_C,\Q_\ell)$.
	\item\label{prop:3rell} The composition $\mathfrak{R}_{\et,\ell}\circ\LL\iota^*$ is canonically isomorphic to the  functor $\nu^*\colon\catD_{\et}(K,\Q)^{\cp}\ra{\widehat{\catD}}_{\et}^{\cp}(K,\Q_\ell)$ induced by extending coefficients.
\end{enumerate}
\end{prop}
\begin{proof}
The main statement and the first property follow from \cite[Theorem 3.2]{bamb-vez}. The second property is proved in \cite[Remark 3.3]{bamb-vez}. The third property can be proved at an integral level, be inspecting the functor $\catD_{\et}(K,\Z)^{\cp}\ra{\widehat{\catD}}_{\et}^{\cp}(K,\Z_\ell)$ induced by the integral version of $\mathfrak{R}_{\et,\ell}$ (see \cite[Theorem 3.2]{bamb-vez}). By its construction, based on the  Rigidity Theorem \cite[Theorem 2.1]{bamb-vez}, we see that it is canonically equivalent to the functor induced by extending coefficients, as wanted.
\end{proof}

If we want to relate the functor $\mathfrak{R}_{\et,\ell}$ with Berkovich's version of Tate's conjecture \cite{berk-tate} we need to introduce weights of Weil numbers appearing as eigenvalues of a lift of Frobenius. We then 
 consider the functor $H_*\colon{\widehat{\catD}}_{\et}^{\cp}(K,\Q_\ell)\ra\bigoplus\Rep_{\ct}(\Gal(K),\Q_\ell)$ associating to a complex its homology sheaves, which are  $\Q_\ell$ vector spaces endowed with a continuous action of $\Gal(K)$ (with respect to  the $\ell$-adic topology on $\Q_\ell$). We use the following notation of Berkovich.

\begin{dfn}
Let $V$ be a continuous $\ell$-adic representation of $\hat{\Z}$ and let $F$ be a topological generator of $\hat{\Z}$. We say $V$ has \emph{weight zero} if  the eigenvalues of $F$ are Weil numbers of weight equal to $0$. The subcategory of representations $\Rep_{\ct}(\hat{\Z},\Q_\ell)$ they form will be denoted by $ \Rep_{\ct}(\hat{\Z},\Q_\ell)_0$. For any representation $V$ we let $V_0$ [resp. $V^0$] be the maximal sub-representation [resp. quotient representation] of $V$ such that the eigenvalues of $F$ are Weil numbers of weight equal to $0$. %
Since the inverse of a Weil number of weight $0$ is again a Weil number of weight $0$, one has $V^0\cong ((V^\vee)_0)^\vee$.
\end{dfn}

\begin{dfn}
We let $\tilde{\omega}_0$ be the functor $\Rep_{\ct}(\hat{\Z},\Q_\ell)^{\cp}\ra \Rep_{\ct}(\hat{\Z},\Q_\ell)^{\cp}_0$ mapping $V$ to $V^0$. It is a left adjoint functor to the canonical inclusion.
\end{dfn}

If $V$ be a continuous $\ell$-adic Galois representation, we can consider  $F\in\Gal(K)$ to be a lift of the geometric Frobenius and restrict $V$ to a representation of $\langle F\rangle\cong\hat{\Z}$. This defines a functor 
$$
\bigoplus\Rep_{\cont}(\Gal(K),\Q_\ell)\ra \bigoplus\Rep_{\cont}(\hat{\Z},\Q_\ell).
$$

By composition, we have then constructed a functor (depending on the choice of $F$)
$$
H_*^F\mathfrak{R}_{\et,\ell}\colon\RigDA^{\eff}_{\Frobet}(K,\Q_\ell)^{\cp}\ra \bigoplus\Rep_{\cont}(\hat{\Z},\Q_\ell)^{\cp}
$$
which obviously restricts to a functor
$$
H_*^F\mathfrak{R}_{\et,\ell}\colon\RigDA^{\eff}_{\Frobet}(K,\Q_\ell)_0^{\cp}\ra \bigoplus\Rep_{\cont}(\hat{\Z},\Q_\ell)^{\cp}_0
$$
since  the Galois action on come compact Artin motive factors over a finite quotient of the Galois group. We now show that the functors $\omega_0$'s are compatible with the two functors above.

\begin{prop}\label{comm}
The following diagram is commutative:
$$
\xymatrix{
	\RigDA^{\eff}_{\Frobet}(K,\Q_\ell)^{\cp}\ar[r]^-{H_*^F\mathfrak{R}_{\et,\ell}}\ar[d]^{\omega_0}&
	\bigoplus\Rep_{\cont}(\hat{\Z},\Q_\ell)^{\cp}\ar[d]^{\tilde{\omega}_0}\\
		\RigDA^{\eff}_{\Frobet}(K,\Q_\ell)^{\cp}_0\ar[r]^-{H_*^F\mathfrak{R}_{\et,\ell}}&
		\bigoplus	\Rep_{\cont}(\hat{\Z},\Q_\ell)_0^{\cp}\\
	}
$$
\end{prop}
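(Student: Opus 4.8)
The plan is to exhibit the square as the \emph{mate} of a tautological identity, and then to reduce its commutativity --- via the standard compatibility criterion for reflective localisations --- to a single purity computation on good--reduction motives. Write $\Phi\colonequals H^F_*\mathfrak{R}_{\et}$, and let $\jmath$ and $\jmath'$ denote the inclusions of $\RigDA^{\eff}_{\Frobet}(K,\Q_\ell)^{\cp}_0$ and of $\bigoplus\Rep_{\cont}(\hat\Z,\Q_\ell)_0^{\cp}$, with left adjoints $\omega_0$ and $\tilde\omega_0$. The key point is that $\Phi$ already sends Artin motives to weight--zero representations: by the second listed property of $\mathfrak{R}_{\et}$ one has $\mathfrak{R}_{\et}\circ\LL\iota^*\cong\nu^*$, and by Proposition~\ref{sigma} an object of $\catD_{\et}(K,\Q)^{\cp}$ has homology corresponding to $\Q$--representations of $\Gal(K)$ with \emph{finite} image, so a lift of Frobenius acts on $H^F_*\nu^*(-)$ with finite order, hence with eigenvalues roots of unity. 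Thus $\Phi\circ\jmath\cong\jmath'\circ\Phi_0$, with $\Phi_0$ the lower horizontal functor, and the mate of this identity under $\omega_0\dashv\jmath$ and $\tilde\omega_0\dashv\jmath'$ is a canonical natural transformation $\alpha\colon\tilde\omega_0\Phi\Rightarrow\Phi_0\omega_0$; concretely $\alpha_M$ comes from applying $\mathfrak{R}_{\et}$ to the unit $M\to\jmath\omega_0M$, rewriting $\mathfrak{R}_{\et}\omega_0M=\mathfrak{R}_{\et}\LL\iota^*\LL B^*_{\Gal(K)}M\cong\nu^*\LL B^*_{\Gal(K)}M$, and then applying $H^F_*$ together with the universal property of $\tilde\omega_0$. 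It remains to prove $\alpha$ is invertible.

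By the recognition criterion for reflective localisations, $\alpha$ is an isomorphism if and only if $\mathfrak{R}_{\et}$ carries $\omega_0$--equivalences to $\tilde\omega_0$--equivalences. Since $\ker\omega_0$ is a localising subcategory generated by compact objects, it is enough to verify this on a generating family, i.e.\ to show that $\mathfrak{R}_{\et}$ sends each unit $\Q_\ell(X)\to\omega_0\Q_\ell(X)$, for $X$ in a set of compact generators of $\RigDA^{\eff}_{\Frobet}(K,\Q_\ell)$, to a map that becomes invertible after $\tilde\omega_0\circ H^F_*$. Arguing exactly as in the proof of Theorem~\ref{mainadj} --- taking $X$ to range over connected smooth rigid analytic varieties of \emph{potentially} good reduction by \cite[Theorem~2.5.34]{ayoub-rig}, then base--changing along $\RigSm/K\to\RigSm/K'$ for a suitable finite Galois $K'/K$ and taking $\Gal(K'/K)$--invariants (exact because $\Q\subset\Q_\ell$), using that $\mathfrak{R}_{\et}$, $\omega_0$ and $\tilde\omega_0$ are all compatible with this base change and that the property of having weight zero is unchanged under a finite extension of $K$ --- we reduce to $X$ of good reduction. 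There, by Proposition~\ref{isXC} and the contractibility of $|X_C|_{\Berk}$, the motive $\omega_0\Q_\ell(X)$ is an Artin motive concentrated in homological degree $0$, and $\mathfrak{R}_{\et}$ of the unit is an isomorphism on $H_0$; what has to be checked is that $\tilde\omega_0$ annihilates $H_n$ of $\mathfrak{R}_{\et}\mathrm{cone}(\Q_\ell(X)\to\omega_0\Q_\ell(X))$ for every $n$. The class of complexes whose homology is, degreewise, a representation with no weight--zero part is stable under cones, shifts, sums and retracts by strictness of the weight filtration, which is exactly what makes passage to generators legitimate even though $\tilde\omega_0$ is only right exact.

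Finally, the long exact sequence gives $H_n\mathfrak{R}_{\et}\mathrm{cone}(\Q_\ell(X)\to\omega_0\Q_\ell(X))\cong H^{n-1}_{\et}(X_C,\Q_\ell)^\vee$ for $n\ge 2$ and $0$ for $n\le 1$, using the third listed property of $\mathfrak{R}_{\et}$ and that the unit induces the identity on $\pi_0(X_C)$, hence an isomorphism on $H_0$. Since $X$ has good reduction, for $j\ge 1$ the representation $H^j_{\et}(X_C,\Q_\ell)$ is identified --- compatibly with a lift of Frobenius --- with $H^j_{\et}(\mathcal{X}_{\bar s},\Q_\ell)$ for a smooth formal model $\mathcal{X}$, which is mixed of weights $\ge j$ by Deligne's purity for smooth varieties over finite fields; hence $H^{n-1}_{\et}(X_C,\Q_\ell)^\vee$ has all weights $\le-(n-1)<0$ for $n\ge2$, so its dual has only positive weights and no weight--zero subrepresentation, i.e.\ $\tilde\omega_0$ kills it, and $\alpha$ is an isomorphism. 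The delicate parts are precisely this reduction step --- transporting the weight filtration through so that the argument on generators is valid, and performing the Galois descent compatibly for all four functors --- while the sole external input is Deligne's purity theorem.
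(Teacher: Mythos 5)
Your proof is correct and follows essentially the same strategy as the paper: construct the comparison transformation from the adjunction units, observe that the full subcategory on which it is invertible is thick and closed under small sums, reduce to compact generators of potentially good reduction via \cite[Theorem 2.5.34]{ayoub-rig} and finite Galois descent, and settle the good-reduction case by contractibility of the Berkovich space on one side and weight purity on the other. Two remarks on the differences. First, you organize the reduction to generators via the ``mate'' of $\Phi\jmath\cong\jmath'\Phi_0$ and a recognition criterion for reflective localizations, whereas the paper argues directly that the subcategory $\catT$ of objects where $\eta$ is invertible is closed under cones by the five lemma and exactness of $V\mapsto V^\vee$ and $V\mapsto V_0$; these are equivalent, but the paper's version avoids the need to verify that $\ker\omega_0$ is compactly generated. (Your caveat that $\tilde{\omega}_0$ is ``only right exact'' is also unnecessary: $V\mapsto V_0$ is exact since the weight-zero generalized eigenspaces split off as a direct summand, which is exactly what the paper invokes.) Second, and more substantively, you make explicit the appeal to Deligne's Weil II purity for the special fibre $H^{j}_{\et}(\mfX_{\bar s},\Q_\ell)$ being mixed of weight $\ge j$; the paper states only ``(it is of weight $0$)'' after asserting that the cohomology ``is zero unless $i=0$'', which as written conflates the full cohomology with its weight-zero part --- your phrasing is the correct reading of that passage and is a genuine clarification. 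You likewise spell out the Galois descent step ($\mathfrak{R}_{\et}$, $\omega_0$, $\tilde{\omega}_0$ all compatible with finite base change, invariance of weight-zero under finite extension), which the paper leaves implicit; both points are worth recording when reading the paper's proof.
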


\begin{proof}
We denote by $\iota$ be the right adjoint functors of $\omega_0$ and $\tilde{\omega}_0$ (the obvious inclusions). From the commutativity $\iota\circ H_*^F\mathfrak{R}_{\et,\ell}\cong H_*^F\mathfrak{R}_{\et,\ell}\circ\iota$ and the unit of the adjunction we deduce the existence of a natural transformation
$$
H_*^F\mathfrak{R}_{\et,\ell}\Rightarrow H_*^F\mathfrak{R}_{\et,\ell}\circ \iota\circ \omega_0\cong \iota \circ H_*^F\mathfrak{R}_{\et,\ell} \circ \omega_0
$$
which induces a natural transformation $\eta\colon\tilde{\omega}_0\circ H_*^F\mathfrak{R}_{\et,\ell}\Rightarrow H_*^F\mathfrak{R}_{\et,\ell}\circ\omega_0$.  We let $\catT$ be the full subcategory of $	\RigDA^{\eff}_{\Frobet}(K,\Q_\ell)^{\cp}$ of those objects $C$ such that $\eta(C)$ is invertible. All the functors involved commute with finite sums and shifts, so we deduce that $\catT$ is closed under these operations. We now let 
$$
X\ra Y\ra C\ra
$$
be a distinguished triangle of $	\RigDA^{\eff}_{\Frobet}(K,\Q_\ell)^{\cp}$ with $X$ and $Y$ inside $\catT$. Since the functors $\omega_0$ and $\mathfrak{R}_{\et,\ell}$ are triangulated, we obtain the following long exact sequence
$$
H_i(\mathfrak{R}_{\et,\ell}\omega_0(X))\ra H_i(\mathfrak{R}_{\et,\ell}\omega_0(Y))\ra H_i(\mathfrak{R}_{\et,\ell}\omega_0(C))\ra H_{i-1}(\mathfrak{R}_{\et,\ell}\omega_0(X))
$$
On the other hand, since $\mathfrak{R}_{\et,\ell}$ is triangulated and the functors $V\mapsto V^\vee$, $V\mapsto V_0$ are exact, we also deduce the following long exact sequence 
$$
\tilde{\omega}_0H_i(\mathfrak{R}_{\et,\ell}(X)))\ra \tilde{\omega}_0H_i(\mathfrak{R}_{\et,\ell}(Y))\ra \tilde{\omega}_0H_i(\mathfrak{R}_{\et,\ell}(C))\ra \tilde{\omega}_0H_{i-1}(\mathfrak{R}_{\et,\ell}(X))
$$
The transformation $\eta$ induces a morphism between the two long exact sequences above. By the five-lemma and the isomorphisms $H_i(\mathfrak{R}_{\et,\ell}\omega_0(X))\cong\tilde{\omega}_0 H_i(\mathfrak{R}_{\et,\ell}(X))$ and $H_i(\mathfrak{R}_{\et,\ell}\omega_0(Y))\cong\tilde{\omega}_0 H_i(\mathfrak{R}_{\et,\ell}(Y))$ we then deduce $H_i(\mathfrak{R}_{\et,\ell}\omega_0(C))\cong\tilde{\omega}_0 H_i(\mathfrak{R}_{\et,\ell}(C))$ proving that $\eta(C)$ is invertible as well. We have therefore proved that $\catT$ is closed under cones.

In order to show $\catT=	\RigDA^{\eff}_{\Frobet}(K,\Q_\ell)^{\cp}$ it then suffices to prove that a set of generators of $	\RigDA^{\eff}_{\Frobet}(K,\Q_\ell)^{\cp}$ (as a triangulated category) lie in $\catT$. For example, we can take motives of the form $\Q_\ell(X)$ for $X$ a rigid analytic variety of potentially good reduction by \cite[Theorem 2.5.34]{ayoub-rig} and \cite{vezz-DADM}. We  then fix $X$ and we suppose that for some finite extension $K'/K$ there is a smooth formal model $\mfX$ over $\mcO_{K'}$ whose generic fiber is $X_{K'}$. In particular, $|X_{K'}|_{\Berk}$ is weakly contractible (see \cite[Section 5]{berk-contr}). We obtain that $$(H_*^F\mathfrak{R}_{\et,\ell}\circ\omega_0(\Lambda(X)))^\vee\cong\bigoplus H^i_{\Sing}(|X_C|_{\Berk},\Q_\ell)\cong H^0_{\Sing}(|X_C|_{\Berk},\Q_\ell)\cong\bigoplus_{\pi_0(X_C)}\Q_\ell$$
where the first isomorphism follows from the definition  $\omega_0\cong\iota\circ\LL B^*_{\Gal(K)}$, Proposition \ref{isXC} and Proposition \ref{Rell}. 

 By \cite[Corollary 4.5(iii) and Corollary 5.4]{berk-van} if we let $\bar{k}$ be the residue field of $C$, we obtain on the other hand that $H^i_{\et}(X_C,\Q_\ell)\cong H^i_{\et}(\mfX_{\sigma\bar{k}},\Q_\ell)$ whose weight-zero part is zero unless $i=0$, and equal to $\bigoplus_{\pi_0(\mfX_{\sigma\bar{k}})}\Q_\ell$ otherwise (from the Weil conjecture proved by Deligne \cite{del-WI}). Also, since $\pi_0(X_C)\cong\pi_0(\mfX_{\sigma\bar{k}})$ as $\Gal(K)$-sets, we  deduce $H^i_{\et}(X_C,\Q_\ell)_0\cong (H_*^F\mathfrak{R}_{\et,\ell}\circ\omega_0(\Lambda(X)))^\vee$ which entails $\tilde{\omega}_0\mathfrak{R}_{\et,\ell}\Q_\ell(X)\cong \mathfrak{R}_{\et,\ell}\omega_0\Q_\ell(X)$ as wanted.
\end{proof}

We can finally generalize Berkovich's formulas \cite{berk-tate} to arbitrary rigid analytic compact motives.%

\begin{cor}\label{maincor}Let $M$ be in $\RigDA^{\eff}_{\Frobet}(K,\Q)^{\ct}$ and $i$ be in $\Z$. Then $H^i(\LL B^*_{\Gal(K)}M)\otimes\Q_{\ell}$ coincides with $H^i_{\et}(M_C,\Q_{\ell})_0$. In particular, 
if $X$ is a smooth quasi-compact rigid variety or an analytification of an algebraic variety, we have $H^i_{\Sing}(|X_C|_{\Berk},\Q_\ell)\cong H^i_{\et}(X_C,\Q_\ell)_0$.
\end{cor}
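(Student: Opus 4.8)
The plan is to evaluate the commutative square of the preceding proposition on the compact motive $M$ and unwind the two composites. Going first down and then right: since $\omega_0=\LL\iota^*\circ\LL B^*_{\Gal(K)}$ and $\LL B^*_{\Gal(K)}M$ is compact (by the proposition showing $\omega_0$ restricts to compact objects), the canonical isomorphism $\mathfrak{R}_{\et}\circ\LL\iota^*\cong\nu^*$ identifies $\mathfrak{R}_{\et}(\omega_0M)$ with $\nu^*(\LL B^*_{\Gal(K)}M)$, that is, with $\LL B^*_{\Gal(K)}M$ after extension of coefficients from $\Q$ to $\Q_\ell$. As $\nu^*$ is exact, hence commutes with the homology functors $H_i$, and as restricting a continuous $\Gal(K)$-representation along a lift $F$ of Frobenius is exact, this composite carries $M$ to $H_i(\LL B^*_{\Gal(K)}M)\otimes\Q_\ell$, regarded as a $\hat{\Z}$-representation via $F$.

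Going first right and then down: by the definition $\tilde{\omega}_0(V)=((V^\vee)_0)^\vee$, this composite sends $M$ to $\big((H_i^F\mathfrak{R}_{\et}(M))^\vee_0\big)^\vee$. Using the identification $(H_i\mathfrak{R}_{\et}(\Q_\ell(X)))^\vee\cong H^i_{\et}(X_C,\Q_\ell)$, which we adopt as the definition of $H^i_{\et}(M_C,\Q_\ell)$ for an arbitrary compact motive $M$ (compatibly with representable $M$ and with the $F$-action), the output is $\big(H^i_{\et}(M_C,\Q_\ell)_0\big)^\vee$. Commutativity of the square thus produces a natural isomorphism
\[
H_i(\LL B^*_{\Gal(K)}M)\otimes\Q_\ell \;\cong\; \big(H^i_{\et}(M_C,\Q_\ell)_0\big)^\vee .
\]

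All the vector spaces involved are finite-dimensional --- by Proposition \ref{berkH}(i) on the Berkovich side and by compactness of $\mathfrak{R}_{\et}(M)$ on the \'etale side --- so dualizing the displayed isomorphism and recalling that, in the cohomological indexing used in the statement, $H^i(\LL B^*_{\Gal(K)}M)$ denotes the $\Q$-linear dual of the homology object $H_i(\LL B^*_{\Gal(K)}M)$ (which for $M=\Q(X)$ with $X$ smooth quasi-compact is singular cohomology, by Proposition \ref{berkH}(i)) gives the first assertion $H^i(\LL B^*_{\Gal(K)}M)\otimes\Q_\ell\cong H^i_{\et}(M_C,\Q_\ell)_0$; the isomorphism is a priori one of $\hat{\Z}$-representations, but, being induced by adjunction units, it is in fact $\Gal(K)$-equivariant. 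For the final sentence one takes $M=\Q(X)$ with $X$ a smooth quasi-compact rigid variety or the analytification of a smooth algebraic variety over $K$ --- such motives are compact --- and invokes Proposition \ref{isXC}, which identifies $H_i(\LL B^*_{\Gal(K)}\Q(X))\otimes\Q_\ell$ with $H^{\Sing}_i(|X_C|_{\Berk},\Q_\ell)$ together with its continuous Galois action, whose dual is $H^i_{\Sing}(|X_C|_{\Berk},\Q_\ell)$; since $H^i_{\et}(X_C,\Q_\ell)=(H_i\mathfrak{R}_{\et}(\Q_\ell(X)))^\vee$ by construction, this specializes the first assertion to Berkovich's formula \eqref{w0B}.

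The main obstacle is bookkeeping rather than a new idea: one must keep straight the two dualizations --- the one built into $\tilde{\omega}_0$ and the one relating singular homology to singular cohomology --- give a clean meaning to the symbol $H^i_{\et}(M_C,\Q_\ell)$ for a non-representable compact motive, and check that the isomorphism extracted from the square is genuinely $\Gal(K)$-equivariant (not merely $\langle F\rangle$-equivariant), which is why it is preferable to trace the argument through the adjunction units rather than only through the weight filtration of $\hat{\Z}$-representations.
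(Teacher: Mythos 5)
Your proof takes exactly the paper's route: evaluate the commutative square from the preceding proposition on $M$, unwind the two composites using $\omega_0=\LL\iota^*\circ\LL B^*_{\Gal(K)}$, $\mathfrak{R}_{\et}\circ\LL\iota^*\cong\nu^*$, the formula $\tilde{\omega}_0(V)=((V^\vee)_0)^\vee$, and the dual relation between $H_i\mathfrak{R}_{\et}$ and \'etale cohomology, then dualize using finite-dimensionality. You make explicit some unwinding the paper merely cites from the earlier proof, and your remark on $\Gal(K)$- versus $\langle F\rangle$-equivariance is a reasonable sharpening, but the argument is the same.
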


\begin{proof}
	By the definition of $\omega_0$ and Proposition \ref{Rell}, the two groups of the statement coincide precisely with the $i$-th cohomology groups of $ (H_*^F\mathfrak{R}_{\et,\ell} \circ \omega_0)(M)$ and $    (\widetilde{\omega}_0\circ H_*^F\mathfrak{R}_{\et,\ell})(M)$ respectively. The first part of the corollary then follows from Proposition \ref{comm}. 
	
	For the second part, it suffices to take $M=\Lambda(X)$ and  refer to the proof of the previous proposition where we showed that $ \bigoplus H^i_{\Sing}(|X_C|_{\Berk},\Q_\ell)^\vee\cong (H_*^F\mathfrak{R}_{\et,\ell}\circ\omega_0)(\Lambda(X))$ and $\bigoplus (H^i_{\et}(X_C,\Q_\ell)_0)^\vee\cong (\omega_0\circ H_*^F\mathfrak{R}_{\et,\ell})(\Lambda(X))$.
\end{proof}

\begin{cor}Let $M$ be in $\RigDA^{\eff}_{\Frobet}(K,\Q)^{\ct}$ and $i$ be in $\Z$. Then $H^i(\LL B^*M)\otimes\Q_{\ell}$ coincides with $H^i_{\et}(M_C,\Q_{\ell})^{\Gal(K^{\sep}/K)}$. In particular, 
	if $X$ is a smooth quasi-compact rigid variety or an analytification of an algebraic variety, we have $H^i_{\Sing}(|X|_{\Berk},\Q_\ell)\cong H^i_{\et}(X_C,\Q_\ell)^{\Gal(K^{\sep}/K)}$.
\end{cor}

\begin{proof}
	It suffices to argue like in \cite[Corollary 1.2]{berk-tate}. Alternatively, one can use Remark \ref{Bla} and the same strategy of the proof above.
\end{proof}

\begin{rmk}
We also obtain the versions of Berkovich's formulas for cohomology \emph{with compact support}
$$
H^i_{\Sing,c}(|X_C|_{\Berk},\Q_\ell)\cong H^i_{\et,c}(X_C,\Q_\ell)_0
$$
for any algebraic variety $X$ over $K$. It suffices to apply Corollary \ref{maincor} to the analytification (see \cite[Proposition 1.3.6]{ayoub-rig}) of the motive $M^c(X)$ computing cohomology with compact support, following the notation of \cite[Chapter 16]{mvw}.%
\end{rmk}

\begin{rmk}
Berkovich's results on the comparison of the weight-zero part of cohomology and singular cohomology of the Berkovich space go beyond the formula that we generalize here in Corollary \ref{maincor}. Indeed, there are versions of it for trivially valued finitely generated fields and archimedean fields (see the cases (b) and (c) of \cite{berk-tate}) as well as for $p$-adic cohomologies (see case (a'') of \cite{berk-tate}).  In this work, we heavily relied on the results of \cite{berk-contr} which are proved there only for non-trivially valued non-archimedean fields, hence our Assumption \ref{assu}. Nonetheless, we believe that the other versions of Berkovich's formula have a motivic interpretation too. Such refinements are left for future work.
\end{rmk}

\section*{Acknowledgments}

We thank J\"org Wildeshaus for the numerous discussions  and for having suggested the content of the main theorem in Section \ref{artq}. We  thank Geoffroy Horel and Jacklyn Lang for their useful remarks on earlier versions of this text, and for many fruitful discussions on the content of this article. We also thank an anonymous referee for his/her useful suggestions and corrections.

  \bibliographystyle{plain}

\begin{thebibliography}{10}
  
  \bibitem{ayoub-etale}
  Joseph Ayoub.
  \newblock La r\'ealisation \'etale et les op\'erations de {G}rothendieck.
  \newblock Annales scientifiques de l'{E}cole normale sup\'erieure 47, fascicule
    1 (2014), 1-145.
  
  \bibitem{ayoub-nr}
  Joseph Ayoub.
  \newblock Nouvelles cohomologies de weil en caractéristique positive.
  \newblock Preprint.
  
  \bibitem{ayoub-th2}
  Joseph Ayoub.
  \newblock Les six op\'erations de {G}rothendieck et le formalisme des cycles
    \'evanescents dans le monde motivique, {II}.
  \newblock {\em Ast\'erisque}, (315):vi+364 pp. (2008), 2007.
  
  \bibitem{ayoub-note}
  Joseph Ayoub.
  \newblock Note sur les op\'erations de {G}rothendieck et la r\'ealisation de
    {B}etti.
  \newblock {\em J. Inst. Math. Jussieu}, 9(2):225--263, 2010.
  
  \bibitem{ayoub-icm}
  Joseph Ayoub.
  \newblock A guide to (\'{e}tale) motivic sheaves.
  \newblock In {\em Proceedings of the {I}nternational {C}ongress of
    {M}athematicians---{S}eoul 2014. {V}ol. {II}}, pages 1101--1124. Kyung Moon
    Sa, Seoul, 2014.
  
  \bibitem{ayoub-h1}
  Joseph Ayoub.
  \newblock L'alg\`ebre de {H}opf et le groupe de {G}alois motiviques d'un corps
    de caract\'eristique nulle, {I}.
  \newblock {\em J. Reine Angew. Math.}, 693:1--149, 2014.
  
  \bibitem{ayoub-h2}
  Joseph Ayoub.
  \newblock L'alg\`ebre de {H}opf et le groupe de {G}alois motiviques d'un corps
    de caract\'eristique nulle, {II}.
  \newblock {\em J. Reine Angew. Math.}, 693:151--226, 2014.
  
  \bibitem{ayoub-rig}
  Joseph Ayoub.
  \newblock Motifs des vari\'et\'es analytiques rigides.
  \newblock {\em M\'em. Soc. Math. Fr. (N.S.)}, (140-141):vi+386, 2015.
  
  \bibitem{ayoub-concon}
  Joseph Ayoub.
  \newblock Motives and algebraic cycles: a selection of conjectures and open
    questions.
  \newblock In {\em Hodge theory and {$L^2$}-analysis}, volume~39 of {\em Adv.
    Lect. Math. (ALM)}, pages 87--125. Int. Press, Somerville, MA, 2017.
  
  \bibitem{ayoub-bv}
  Joseph Ayoub and Luca Barbieri-Viale.
  \newblock 1-motivic sheaves and the {A}lbanese functor.
  \newblock {\em J. Pure Appl. Algebra}, 213(5):809--839, 2009.
  
  \bibitem{ayoub-zucker}
  Joseph Ayoub and Steven Zucker.
  \newblock Relative {A}rtin motives and the reductive {B}orel-{S}erre
    compactification of a locally symmetric variety.
  \newblock {\em Invent. Math.}, 188(2):277--427, 2012.
  
  \bibitem{bamb-vez}
  Federico Bambozzi and Alberto Vezzani.
  \newblock Rigidity for rigid analytic motives.
  \newblock arXiv:1810.04968 [math.AG], 2018.
  
  \bibitem{BVK}
  Luca Barbieri-Viale and Bruno Kahn.
  \newblock On the derived category of 1-motives.
  \newblock {\em Ast\'erisque}, (381):xi+254, 2016.
  
  \bibitem{berkovich}
  Vladimir~G. Berkovich.
  \newblock {\em Spectral theory and analytic geometry over non-{A}rchimedean
    fields}, volume~33 of {\em Mathematical Surveys and Monographs}.
  \newblock American Mathematical Society, Providence, RI, 1990.
  
  \bibitem{berk-van}
  Vladimir~G. Berkovich.
  \newblock Vanishing cycles for formal schemes.
  \newblock {\em Invent. Math.}, 115(3):539--571, 1994.
  
  \bibitem{berk-contr}
  Vladimir~G. Berkovich.
  \newblock Smooth {$p$}-adic analytic spaces are locally contractible.
  \newblock {\em Invent. Math.}, 137(1):1--84, 1999.
  
  \bibitem{berk-tate}
  Vladimir~G. Berkovich.
  \newblock An analog of {T}ate's conjecture over local and finitely generated
    fields.
  \newblock {\em Internat. Math. Res. Notices}, (13):665--680, 2000.
  
  \bibitem{bs}
  Bhargav Bhatt and Peter Scholze.
  \newblock The pro-\'etale topology for schemes.
  \newblock {\em Ast\'erisque}, (369):99--201, 2015.
  
  \bibitem{BGR}
  Siegfried Bosch, Ulrich G{\"u}ntzer, and Reinhold Remmert.
  \newblock {\em Non-{A}rchimedean analysis}, volume 261 of {\em Grundlehren der
    Mathematischen Wissenschaften [Fundamental Principles of Mathematical
    Sciences]}.
  \newblock Springer-Verlag, Berlin, 1984.
  \newblock A systematic approach to rigid analytic geometry.
  
  \bibitem{gall-chou}
  Utsav Choudhury and Martin Gallauer~Alves de~Souza.
  \newblock Homotopy theory of dg sheaves.
  \newblock {\em Comm. Algebra}, Published online,
    doi:10.1080/00927872.2018.1554744, 2019.
  
  \bibitem{cd}
  Denis-{C}harles Cisinski and Fr\'ed\'eric D\'eglise.
  \newblock Triangulated categories of mixed motives.
  \newblock arXiv:0912.2110v3 [math.AG], 2009.
  
  \bibitem{cd-etale}
  Denis-Charles Cisinski and Fr\'{e}d\'{e}ric D\'{e}glise.
  \newblock \'{E}tale motives.
  \newblock {\em Compos. Math.}, 152(3):556--666, 2016.
  
  \bibitem{dJ-vdP}
  Johan de~Jong and Marius van~der Put.
  \newblock \'etale cohomology of rigid analytic spaces.
  \newblock {\em Doc. Math.}, 1:No. 01, 1--56, 1996.
  
  \bibitem{del-WI}
  Pierre Deligne.
  \newblock La conjecture de {W}eil. {I}.
  \newblock {\em Inst. Hautes \'Etudes Sci. Publ. Math.}, (43):273--307, 1974.
  
  \bibitem{dugger}
  Daniel Dugger.
  \newblock Universal homotopy theories.
  \newblock {\em Adv. Math.}, 164(1):144--176, 2001.
  
  \bibitem{dhi}
  Daniel Dugger, Sharon Hollander, and Daniel~C. Isaksen.
  \newblock Hypercovers and simplicial presheaves.
  \newblock {\em Math. Proc. Cambridge Philos. Soc.}, 136(1):9--51, 2004.
  
  \bibitem{dug-isa}
  Daniel Dugger and Daniel~C. Isaksen.
  \newblock Topological hypercovers and {$\Bbb A^1$}-realizations.
  \newblock {\em Math. Z.}, 246(4):667--689, 2004.
  
  \bibitem{EKMM}
  A.~D. Elmendorf, I.~K\v r\'\i~\v z, M.~A. Mandell, and J.~P. May.
  \newblock Modern foundations for stable homotopy theory.
  \newblock In {\em Handbook of algebraic topology}, pages 213--253.
    North-Holland, Amsterdam, 1995.
  
  \bibitem{tohoku}
  Alexander Grothendieck.
  \newblock Sur quelques points d'alg\`ebre homologique.
  \newblock {\em T\^ohoku Math. J. (2)}, 9:119--221, 1957.
  
  \bibitem{hovey}
  Mark Hovey.
  \newblock {\em Model categories}, volume~63 of {\em Mathematical Surveys and
    Monographs}.
  \newblock American Mathematical Society, Providence, RI, 1999.
  
  \bibitem{huber2}
  Roland Huber.
  \newblock A generalization of formal schemes and rigid analytic varieties.
  \newblock {\em Math. Z.}, 217(4):513--551, 1994.
  
  \bibitem{huber}
  Roland Huber.
  \newblock {\em \'{E}tale cohomology of rigid analytic varieties and adic
    spaces}.
  \newblock Aspects of Mathematics, E30. Friedr. Vieweg \& Sohn, Braunschweig,
    1996.
  
  \bibitem{ivorra-h}
  Florian Ivorra.
  \newblock Perverse, {H}odge and motivic realizations of \'etale motives.
  \newblock {\em Compos. Math.}, 152(6):1237--1285, 2016.
  
  \bibitem{ivo-seb}
  Florian Ivorra and Julien Sebag.
  \newblock Artin motives, weights and motivic sheaves.
  \newblock {\em Michigan Math. J.}, Advance publication,
    doi:10.1307/mmj/1551258026, 2019.
  
  \bibitem{mvw}
  Carlo Mazza, Vladimir Voevodsky, and Charles Weibel.
  \newblock {\em Lecture notes on motivic cohomology}, volume~2 of {\em Clay
    Mathematics Monographs}.
  \newblock American Mathematical Society, Providence, RI, 2006.
  
  \bibitem{prasolov}
  Andrei~V. Prasolov.
  \newblock On the universal coefficients formula for shape homology.
  \newblock {\em Topology Appl.}, 160(14):1918--1956, 2013.
  
  \bibitem{quick}
  Gereon Quick.
  \newblock Continuous group actions on profinite spaces.
  \newblock {\em J. Pure Appl. Algebra}, 215(5):1024--1039, 2011.
  
  \bibitem{scholze}
  Peter Scholze.
  \newblock Perfectoid spaces.
  \newblock {\em Publ. Math. Inst. Hautes \'Etudes Sci.}, 116:245--313, 2012.
  
  \bibitem{vezz-DADM}
  Alberto Vezzani.
  \newblock Effective motives with and without transfers in characteristic $p$.
  \newblock {\em Adv. Math.}, 306:852--879, 2017.
  
  \bibitem{vezz-rigidreal}
  Alberto Vezzani.
  \newblock The {M}onsky-{W}ashnitzer and the overconvergent realizations.
  \newblock {\em Int. Math. Res. Not. IMRN}, (11):3443--3489, 2018.
  
  \bibitem{vezz-fw}
  Alberto Vezzani.
  \newblock A motivic version of the theorem of {F}ontaine and {W}intenberger.
  \newblock {\em Compos. Math.}, 155(1):38--88, 2019.
  
  \bibitem{vezz-tilt4rigid}
  Alberto Vezzani.
  \newblock Rigid cohomology via the tilting equivalence.
  \newblock {\em J. Pure Appl. Algebra}, 223(2):818--843, 2019.
  
  \end{thebibliography}

\end{document}